\newcommand{\addresseshere}{%
  \enddoc@text\let\enddoc@text\relax
}
\newtheorem{theorem}{Theorem}[section]
\newtheorem{proposition}[theorem]{Proposition}
\newtheorem{lemma}[theorem]{Lemma}
\newtheorem{corollary}[theorem]{Corollary}
\theoremstyle{definition}
\newtheorem{remark}[theorem]{Remark}
\newtheorem{example}[theorem]{Example}
\definecolor{darkgreen}{RGB}{0,90,0}
\DeclareMathOperator{\Id}{Id}
\begin{document}\baselineskip=15pt

\begin{center}
\title[Cones of NL divisors]{Cones of Noether--Lefschetz divisors and moduli spaces of hyperk\"ahler manifolds}

\author{Ignacio Barros}
\address{\parbox{0.9\textwidth}{
Department of Mathematics\\[1pt]
Universiteit Antwerpen\\[1pt]
Middelheimlaan 1, 2020 Antwerpen, Belgium
\vspace{1mm}}}
\email{{ignacio.barros@uantwerpen.be}}

\author{Pietro Beri}
\address{\parbox{0.9\textwidth}{
IECL--Site de Nancy\\[1pt]
Facult\'e des sciences et Technologies\\[1pt]
Campus, Boulevard des Aiguillettes\\[1pt]
54506 Vandœuvre-lès-Nancy, France
\vspace{1mm}}}
\email{{pietro.beri@univ-lorraine.fr}}

\author{Laure Flapan}
\address{\parbox{0.9\textwidth}{
Department of Mathematics\\[1pt]
Michigan State University\\[1pt]
619 Red Cedar Road, East Lansing, MI 48824, USA
\vspace{1mm}}}
\email{{flapanla@msu.edu}}

\author{Brandon Williams}
\address{\parbox{0.9\textwidth}{
University of Heidelberg \\[1pt]
Institute of Mathematics\\[1pt]
Im Neuenheimer Feld 205\\[1pt]
69120 Heidelberg, Germany
\vspace{1mm}}}
\email{{bwilliams@mathi.uni-heidelberg.de}}

\subjclass[2020]{14J15, 14C22, 14E08, 14J42.}
\keywords{K3 surfaces, moduli spaces, effective divisors, Heegner divisors.}
\thanks{I.B. was supported by the Research Foundation – Flanders (FWO) within the framework of the Odysseus program project number G0D9323N and by the Deutsche Forschungsgemeinschaft (DFG, German Research Foundation) -- SFB-TRR 358/1 2023 -- 491392403. 
P. B. was supported by the ERC Synergy Grant ERC-2020-SyG-854361-HyperK. 
L.F.~was supported by NSF grant DMS-2200800.}

\maketitle
\end{center}

\begin{abstract}
We give a general formula for generators of the NL cone on an orthogonal modular variety. This is the cone of effective divisors linearly equivalent to an effective linear combination of irreducible components of Noether-Lefschetz divisors. We apply this to describe, in terms of minimal generators, the NL cone of various moduli spaces of geometric origin such as those of polarized K3 surfaces, cubic fourfolds, and hyperk\"ahler manifolds. Additionally, we establish uniruledness for many moduli spaces of primitively polarized hyperk\"ahler manifolds of ${\rm{OG6}}$ and ${\rm{Kum}}_n$-type. Finally, in analogy with the case of K3 surfaces of degree $2$, we show that any family of polarized ${\rm{Kum}}_2$-type hyperk\"ahler manifolds with divisibility $2$ and polarization degree $2$ over a projective base is isotrivial. 
\end{abstract}

\setcounter{tocdepth}{1}

\section{Introduction}
Two invariants governing the birational geometry of a variety $X$ are its Kodaira dimension and its cone of pseudo-effective divisors $\overline{{\rm Eff}}(X)$. The cone $\overline{{\rm Eff}}(X)$ is defined as the closure in ${\rm{Pic}}_{\mathbb{R}}\left(X\right)$ of the cone of effective $\mathbb{R}$-divisors on $X$. This cone often admits a decomposition into chambers each representing a birational model of the variety. Further, extremal rays of $\overline{{\rm Eff}}(X)$ often arise as the divisorial exceptional locus of birational contractions of $X$.  In general, it can be quite difficult to determine when $\overline{{\rm Eff}}(X)$ is finitely generated, let alone describe it explicitly. 

In the case of the moduli space $\mathcal{F}_{2d}$ of quasi-polarized K3 surfaces of degree $2d$,  the most natural source of effective divisors is Noether--Lefschetz divisors. A very general point $(S,H)\in \mathcal{F}_{2d}$ has Picard group ${\rm Pic}(S)=\mathbb{Z}H$ and so the locus in $\mathcal{F}_{2d}$ where $\rho(S)\geq2$ is a countable union of divisors,  called Noether--Lefschetz divisors (or NL divisors). Concretely, a Noether--Lefschetz divisor $\mathcal{D}_{h,a}$ on $\mathcal{F}_{2d}$ is the reduced divisor obtained by taking the closure of the locus of points $(S,H)\in \mathcal{F}_{2d}$ for which there exists a class $\beta\in {\rm Pic}(S)$, not proportional to $H$, with $\beta^2=2h-2$ and $\beta.H=a$. Maulik--Pandharipande conjectured \cite{MP13}*{Conjecture 3} that the rational Picard group ${\rm Pic}_\mathbb{Q}(\mathcal{F}_{2d})$ is generated by Noether--Lefschetz divisors $\mathcal{D}_{h,a}$. 

The moduli space $\mathcal{F}_{2d}$ arises as a quotient $\mathcal{D}/\widetilde{{\rm{O}}}^+\left(\Lambda_{2d}\right)$, where $\mathcal{D}$ is the Type IV symmetric domain associated with ${\rm O}(2,n)$, the lattice $\Lambda_{2d}$ is the  even lattice of signature $(2,n)$ given by 
\[
\Lambda_{2d}=U^{\oplus 2}\oplus E_8(-1)^{\oplus 2}\oplus \mathbb{Z}\ell,\;\;\hbox{with}\;\;\langle\ell,\ell\rangle=-2d,
\]
and $\widetilde{{\rm{O}}}^+\left(\Lambda_{2d}\right)$ is the group of orientation-preserving isomorphisms of $\Lambda_{2d}$ which act trivially on the discriminant group $D(\Lambda_{2d})=\Lambda_{2d}^\vee\big/\Lambda_{2d}$.
For an arbitrary orthogonal modular variety $\mathcal{D}/\Gamma$, where
 $\Gamma$ is a finite index subgroup of $\widetilde{{\rm{O}}}^+\left(\Lambda\right)$ and $\Lambda$ an even lattice of signature $(2,n)$, Noether--Lefschetz divisors generalize to Heegner divisors, which are images of hyperplane arrangements in $\mathcal{D}$ under the modular projection $\pi:\mathcal{D}\longrightarrow \mathcal{D}\big/\Gamma$.

Bergeron--Li--Millson--Moeglin \cite{BLMM17}  and Bruinier--Zuffetti \cite{BZ24} proved a generalization of Maulik--Pandharipande's conjecture, showing that when $n\geq 3$ and $\Lambda$ splits off two copies of the hyperbolic plane, the Picard group with rational coefficients ${\rm Pic}_\mathbb{Q}(X)$ of any orthogonal modular variety $X=\mathcal{D}/\widetilde{{\rm{O}}}^+\left(\Lambda\right)$ is generated by Heegner divisors. The rank of ${\rm Pic}_\mathbb{Q}(\mathcal{D}/\Gamma)$ was computed by Bruinier in \cite{Bru02b}. 

For any orthogonal modular variety $X=\mathcal{D}/\Gamma$ as above, the NL cone ${\rm{Eff}}^{NL}\left(X\right)\subset {\rm{Pic}}_{\mathbb{Q}}\left(X\right)$ is the convex cone of effective $\mathbb{Q}$-linear combinations of irreducible components of Heegner divisors (known as primitive Heegner divisors) on $X$. The NL cone contains the subcone ${\rm{Eff}}^{H}\left(\mathcal{D}/\Gamma\right)$ generated by the (non-primitive) Heegner divisors on $\mathcal{D}/\Gamma$. After tensoring with $\mathbb{R}$, this NL cone ${\rm{Eff}}^{NL}\left(X\right)$ forms a natural subcone of the cone of pseudo-effective divisors $\overline{{\rm Eff}}(X)$. 

The study of NL cones was initiated in \cite{Pet15} in the case $X=\mathcal{F}_{2d}$, where the following three questions are raised \cite{Pet15}*{Section 4.5}:
\begin{enumerate}
\item \label{petQ1}Is ${\rm{Eff}}^{NL}\left(\mathcal{F}_{2d}\right)$ finitely-generated (polyhedral)?
\item \label{petQ2}Can we compute generators for ${\rm{Eff}}^{NL}\left(\mathcal{F}_{2d}\right)$?
\item \label{petQ3}Is there equality ${\rm{Eff}}^{NL}\left(\mathcal{F}_{2d}\right)=\overline{{\rm Eff}}(\mathcal{F}_{2d})$?
\end{enumerate}
Bruinier--M\"oller \cite{BM19} answered the first question affirmatively, showing that for any orthogonal modular variety $X=\mathcal{D}\big/\widetilde{\rm{O}}^+\left(\Lambda\right)$ with $n\ge 3$ splitting off two copies of the hyperbolic plane, the cone ${\rm{Eff}}^{NL}\left(X\right)$ is always polyhedral.

In this paper, we tackle Question \eqref{petQ2} for $X=\mathcal{D}\big/\widetilde{\rm{O}}^+\left(\Lambda\right)$ under the same assumptions. We consider the $\mathbb{Q}$-vector space $S_{k,\Lambda}$ of vector-valued cusp forms of weight $k=1+\frac{n}{2}$ with respect to the Weil representation \cite{Bor98} and the coefficient extraction functionals in $S_{k,\Lambda}^\vee$
 \[
 c_{m, \mu} : S_{k,\Lambda} \longrightarrow \mathbb{Q}, \quad \sum a_{m, \mu} q^m \mathfrak{e}_{\mu} \mapsto a_{m, \mu},
 \] 
 where $\{\mathfrak{e}_{\mu}\mid \mu\in D(\Lambda)\}$ are the standard generators of the
 group algebra $\mathbb{C}\left[D(\Lambda)\right]$.
 Let $b\ge \lceil k/12\rceil$ be an integer such that the set of $c_{m, \mu}$ with $0<m\le b$ and $\mu \in D(\Lambda)$ generates $S_{k,\Lambda}^\vee$. Then, we consider the weakly holomorphic modular form
\begin{equation}\label{eq: weakly hol}
\Delta^{-b}\cdot E_{(2-k)+12b, \Lambda(-1)}=\sum_{\substack{(m,\mu)\\ -b\leq m}}\alpha_{m,\mu}q^m\mathfrak{e}_\mu,
\end{equation}
where $\Delta(\tau)$ is the scalar-valued discriminant modular form and $E_{(2-k)+12b, \Lambda(-1)}$ is the Eisenstein series of weight $(2-k)+12b$ associated to $\Lambda(-1)$ (see Equation \eqref{sec2:eq:E}).

 Our first result is the following, with the explicit bounds in Theorems \ref{thm: nonprimitive cone generators} and \ref{thm: NL cone generators}.

\begin{theorem}
\label{int:thm:1}
Let $\Lambda$ be an even lattice of signature $(2,n)$ with $n\ge 4$ splitting off two copies of the hyperbolic plane and $X=\mathcal{D}\big/\widetilde{\rm{O}}^+(\Lambda)$ its modular variety. Fixing $b$ as above, there are explicit bounds $\Xi$ and $\Omega$, depending on $k$, the discriminant of $\Lambda$, and the $\alpha_{m, \mu}$ with $-b\le m\le 0$ in \eqref{eq: weakly hol},  such that
\begin{enumerate}
    \item The cone ${\rm{Eff}}^H\left(X\right)$ is generated by all $H_{-m, \mu}$ with $0\le m\le \Xi$. 
    \item The cone ${\rm{Eff}}^{NL}\left(X\right)$ is generated by all $P_{-\Delta, \delta}$ with $0\le \Delta \le \Omega$.
\end{enumerate}
\end{theorem}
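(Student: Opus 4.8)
The plan is to derive both statements from the theory of obstructions to the existence of holomorphic Borcherds products, following Bruinier--M\"oller \cite{BM19} but tracking all constants explicitly. The key dichotomy is this: a $\mathbb{Q}$-linear combination $\sum_{m,\mu} c_{-m,\mu} H_{m,\mu}$ of Heegner divisors is effective if and only if it is (a rational multiple of) the divisor of a holomorphic modular form on $X$, and by Borcherds' converse theorem such a form exists precisely when the combination pairs non-negatively against every weakly holomorphic input. Concretely, one tests against the functionals $c_{m,\mu}$ spanning $S_{k,\Lambda}^\vee$, and the obstruction space is controlled by the finitely many principal parts $\alpha_{m,\mu}$ with $-b \le m \le 0$ of the weakly holomorphic form in \eqref{eq: weakly hol}. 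The finite generation of \cite{BM19} tells us the relevant cone is polyhedral; what remains is to bound the ``degree'' of its generators.

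First I would set up the pairing explicitly. Writing the Eisenstein series $E_{k,\Lambda}$ with Fourier coefficients $q_{m,\mu}$ (these are the degrees of the Heegner divisors, rational numbers with denominators bounded in terms of $\mathrm{disc}(\Lambda)$ by work of Bruinier--Kuss), the obstruction $\Phi = \Delta^{-b} E_{(2-k)+12b,\Lambda(-1)}$ has principal part $\sum_{-b \le m \le 0}\alpha_{m,\mu}q^m\mathfrak{e}_\mu$ and is otherwise determined. Pairing a candidate effective combination against $\Phi$ gives a linear inequality; pairing against $\Delta^{-b}E_{(2-k)+12b,\Lambda(-1)}$ times monomials (or against the basis of the obstruction space built from such forms) gives the full set of inequalities cutting out ${\rm{Eff}}^H(X)$ inside the cone spanned by all $H_{m,\mu}$. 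The bound $\Xi$ then emerges from a growth estimate: the coefficients $q_{m,\mu}$ of the Eisenstein series grow like $m^{k-1+\varepsilon}$, while the positive-index coefficients of $\Phi$ grow only polynomially of a controlled degree, so for $m$ beyond an explicit threshold depending on $k$, $\mathrm{disc}(\Lambda)$, and the finitely many $\alpha_{m,\mu}$ with $m\le 0$, the Heegner divisor $H_{m,\mu}$ is forced to lie in the cone generated by those with smaller $m$. This yields part (1) with $\Xi$ read off from Theorem \ref{thm: nonprimitive cone generators}.

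For part (2) I would pass from Heegner divisors $H_{m,\mu}$ to their irreducible components, the primitive Heegner divisors $P_{\Delta,\delta}$. The relation is a triangular (M\"obius-type) linear system: each $H_{m,\mu} = \sum P_{\Delta,\delta}$ over the finitely many ways $m = \Delta f^2$, $\mu \equiv f\delta$, with $f$ bounded by $\sqrt{m}$. Inverting this expresses each $P_{\Delta,\delta}$ as an integral combination of $H_{m,\mu}$'s with $m \le \Delta \cdot (\text{bounded factor})^2$, and conversely. Combined with part (1), the change of basis inflates the threshold $\Xi$ to a new explicit bound $\Omega$, again depending only on $k$, $\mathrm{disc}(\Lambda)$, and the principal-part coefficients $\alpha_{m,\mu}$, $-b \le m \le 0$; this is the content of Theorem \ref{thm: NL cone generators}. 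The hypothesis $n \ge 4$ (rather than $n \ge 3$) enters to guarantee $k = 1 + n/2 \ge 3$, so that $S_{k,\Lambda}$ behaves well and the Eisenstein series coefficient asymptotics are clean.

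The main obstacle I anticipate is the effective growth estimate in the last step of part (1): one needs a genuinely explicit lower bound on the Eisenstein coefficients $q_{m,\mu}$ and a matching explicit upper bound on the coefficients of $\Phi$, uniform over the discriminant group, in order to pin down $\Xi$ as a closed-form expression rather than an abstract ``sufficiently large'' constant. This requires care with the local densities appearing in the Eisenstein series (Siegel--Weil / Bruinier--Kuss formulas) and with the divisor-sum estimates coming from the $\Delta^{-b}$ factor; the secondary obstacle is bookkeeping the conductor $f$ in the primitive-vs-imprimitive change of basis so that $\Omega$ stays explicit. Everything else is linear algebra over $\mathbb{Q}$ and an appeal to Borcherds' theorem together with the polyhedrality already established in \cite{BM19}.
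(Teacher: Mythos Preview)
Your proposal has the right overall shape---make the convergence argument of \cite{BM19} quantitative---but misidentifies both the mechanism of convergence and the role of the weakly holomorphic form $\Phi=\Delta^{-b}E_{(2-k)+12b,\Lambda(-1)}$, and as a result it misses the actual technical content of the proof.

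The paper does \emph{not} argue via Borcherds products or by pairing combinations of Heegner divisors against $\Phi$ to obtain inequalities. Instead, under the isomorphism $c_{m,\mu}\leftrightarrow H_{-m,\mu}$, each functional decomposes as $c_{m,\mu}=\gamma_{m,\mu}\,e + s_{m,\mu}$ on $\mathrm{Mod}^\circ_{k,\Lambda}=\mathbb{Q}E_{k,\Lambda}\oplus S_{k,\Lambda}$, and the point is that $c_{m,\mu}/\gamma_{m,\mu}\to e$ because the Eisenstein coefficients satisfy $\gamma_{m,\mu}\ge C_{k,\Lambda}\,m^{k-1}$ while the \emph{cusp form} part satisfies $\|s_{m,\mu}\|\le \tilde C\, m^{k/2}$. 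The growth comparison is Eisenstein versus cusp form, not Eisenstein versus the positive-index coefficients of $\Phi$ (which, being coefficients of a weakly holomorphic form involving $\Delta^{-b}$, do not grow ``only polynomially'' in any useful sense). Your proposal never mentions cusp forms, and that is the gap: the heart of the paper's argument is deriving an \emph{explicit} coefficient bound for vector-valued cusp forms of half-integral weight (Lemmas~3.1--3.2), obtained by bounding the diagonal coefficients of Poincar\'e series via Bessel-function and Kloosterman-sum estimates, then converting this to a bound on arbitrary cusp forms via Cauchy--Schwarz and the Petersson inner product. Without such a bound there is no way to produce an explicit $\Xi$.

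The role of $\Phi$ is also different from what you describe. Its principal-part coefficients $\alpha_{m,\mu}$ for $-b\le m<0$ are used, via the residue theorem, to exhibit $e$ as a concrete positive combination $\sum_i\lambda_i c_{m_i,\mu_i}$ of finitely many coefficient functionals (with $\lambda_i=\alpha_{-m_i,\mu_i}/\alpha_{0,0}$). This certifies that $e$ lies in the interior of the cone and, more to the point, lets one compute an explicit radius $R$ of a ball around $e$ contained in the convex hull of the $s_{m_i,\mu_i}/\gamma_{m_i,\mu_i}$. The bound $\Xi$ is then the threshold beyond which $\|c_{m,\mu}/\gamma_{m,\mu}-e\|<R$. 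For part~(2) the M\"obius inversion you describe is correct in spirit, but note that the paper does not simply inflate $\Xi$ by a bounded factor: it re-runs the same argument with the functional $p_{\Delta,\delta}$ in place of $c_{m,\mu}$, bounding $|p_{\Delta,\delta}(f)|$ for cusp forms $f$ and $|p_{\Delta,\delta}(E_{k,\Lambda})|$ separately (the latter via the constant $M$ of \cite{BM19}). Finally, your explanation of the hypothesis $n\ge 4$ is incomplete: for $n=4$ one has $k=3$, where the generic cusp-form bound degenerates (the constant $B$ involves $\zeta(k-2)$), and the paper handles this case separately using scalar-valued bounds on $S_k(\Gamma_1(N^2))$.
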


Theorem \ref{int:thm:1} together with its implementation in a Sage package \cite{heegner_cones} enables the description of ${\rm{Eff}}^{NL}\left(X\right)$ in terms of generators for any such $\Lambda$ (see Section \ref{sec: explicit comp} below). 

We remark that the bound $\Xi$ of Theorem \ref{thm: nonprimitive cone generators} in fact allows for the description of the cone in $\left({\rm Mod}^\circ_{k, \Lambda}\right)^\vee$ generated by coefficient extraction functionals $c_{m, \mu}\colon {\rm Mod}^\circ_{k, \Lambda}\rightarrow \mathbb{Q}$ under the weaker assumption that $\Lambda$ splits off only one copy of the hyperbolic plane. Here ${\rm Mod}^\circ_{k, \Lambda}=\mathbb{Q}E_{k, \Lambda} \oplus S_{k, \Lambda}$ is the $\mathbb{Q}$-vector space of almost cusp forms. The assumption that $\Lambda$ splits off two copies of the hyperbolic plane is needed to convert the above result into a description of ${\rm{Eff}}^{NL}\left(X\right)$ via the results of \cites{BLMM17, BZ24}.

The proof of Theorem \ref{int:thm:1} relies on the relationship between Heegner divisors on $X$ and vector-valued modular forms with respect to the Weil representation for $\Lambda$. In \cite{BM19} the polyhedrality of the NL cone is established by showing that the Hodge class $\lambda$ lies in the interior of the NL cone, and the rays generated by primitive Heegner divisors  converge to $\lambda\mathbb{Q}_{\geq0}$. Establishing a concrete list of generators of ${\rm{Eff}}^{NL}\left(X\right)$ amounts to making the convergence rate explicit which translates into bounding explicitly the growth of the coefficients of the relevant vector-valued modular forms (see Section \ref{sec: NL cone comp}). For vector-valued cusp forms of half-integer weight, despite the considerable literature on bounds for the growth of Fourier coefficients, we are unaware of a general bound with explicit constants. Using Poincar\'e series and Kloosterman sums we derive weak, yet explicit, bounds that suffice for our purposes. 

\subsection{Applications to moduli}\label{sec: explicit comp}

We then focus on cases where the quotient $X=\mathcal{D}\big/\widetilde{\rm{O}}^+\left(\Lambda\right)$ arises as a partial compactification of a coarse moduli space of polarized K3 surfaces, hyperk\"ahler manifolds, or cubic fourfolds. We give explicit formulas for ${\rm{Eff}}^{NL}\left(X\right)$ in terms of generating rays for low-degree polarizations: see Table \ref{sec6:table:NLK3} for the case of (quasi)-polarized K3 surfaces and Tables \ref{sec6:table:NLK32_sp} and \ref{sec6:table:NLK32_nsp} for the case of hyperk\"ahler fourfolds of ${\rm K3}^{[2]}$-type. We remark that the orthogonal modular variety $X=\mathcal{D}\big/\widetilde{\rm{O}}^+\left(\Lambda\right)$ partially compactifying the moduli space of smooth cubic fourfolds is the same as that partially compactifying the moduli space of polarized hyperk\"ahler manifolds of ${\rm K3}^{[2]}$-type with polarization of divisibility $2$ and degree $6$ and thus the description of ${\rm{Eff}}^{NL}\left(X\right)$ for cubic fourfolds is already contained in Table \ref{sec6:table:NLK32_nsp}. In the case of $\mathcal{F}_{2d}$, the calculations in Table \ref{sec6:table:NLK3} confirm (aside from one additional necessary generator in the case $d=13$) the predictions in \cite{Pet15} who computed, for $d\leq 18$, the cone generated by the set of $8d$ primitive Heegner divisors $P_{\Delta,\delta}$, for $\delta\in D(\Lambda), \Delta\in Q(\delta)+s$ with $s=0,1,2,3$, and conjectured that this cone coincides with the one generated by all of them, that is, with ${\rm{Eff}}^{NL}\left(\mathcal{F}_{2d}\right)$. 

In some cases, one can use the position of the canonical class $K_X$ with respect to the NL cone to show that $X$ has negative Kodaira dimension. This occurs when $K_X$ lies on the opposite side from the NL cone of the hyperplane in ${\rm Pic}_\mathbb{Q}(X)$ of divisors with degree $0$ with respect to the Hodge class $\lambda$. We formalize this condition numerically in terms of the Eisenstein series (see the more general Proposition \ref{prop: uniruled with branch}) in order to give the following criterion for uniruledness.  
\begin{proposition}\label{prop: general uniruledness} Let $\Lambda$ be an even lattice of signature $(2,n)$ with $n\geq3$ splitting off two copies of $U$ and let  $E_{\frac{n+2}{2},\Lambda}$ be its Eisenstein series.  If \\\[nc_{0,0}\left(E_{\frac{n+2}{2},\Lambda}\right)+\frac{1}{4}c_{1,0}\left(E_{\frac{n+2}{2},\Lambda}\right)<0,\] then the orthogonal modular variety $X=\mathcal{D}\big/\widetilde{{\rm{O}}}^+(\Lambda)$ is uniruled. 
\end{proposition}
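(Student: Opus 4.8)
The plan is to deduce uniruledness from a more general criterion (the cited Proposition~\ref{prop: uniruled with branch}) which says, roughly, that $X$ is uniruled whenever the canonical class $K_X$ lies strictly on the opposite side of the hyperplane $\{\deg_\lambda = 0\}$ from the effective cone, where $\lambda$ is the Hodge class. Concretely, one writes $K_X$ in terms of $\lambda$ and the ramification divisors of the modular projection $\pi\colon \mathcal{D}_\Lambda \to X$: by Gritsenko--Hulek--Sankaran's formula, $K_X \sim n\lambda - \tfrac12\sum (\text{branch divisors})$ up to boundary contributions on a suitable toroidal compactification. The branch divisors are supported on Heegner divisors of the form $H_{1,\mu}$ (the $(-2)$-reflective part) and $H_{1/4,\mu}$ (the $(-2d)$-type part with divisibility $2$), and the question of which side of $\{\deg_\lambda=0\}$ the class $K_X$ lies on becomes the question of the sign of a single linear functional applied to $K_X$ once everything is expressed in the Eisenstein-series coordinates.

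First I would recall that, after the work of \cites{BLMM17,BZ24}, the relevant $\mathbb{Q}$-Picard group is spanned by Heegner divisors, and that pairing against the Eisenstein series $E_{\frac{n+2}{2},\Lambda}$ computes the $\lambda$-degree: the functional $\phi \mapsto \langle \phi, E_{k,\Lambda}\rangle$ (equivalently, reading off that the obstruction to a Heegner divisor being a multiple of $\lambda$ lives in the cusp-form part) detects exactly the component of a divisor along $\lambda$. Under this identification, the statement ``$K_X$ is on the far side of $\{\deg_\lambda=0\}$ from $\mathrm{Eff}^{NL}(X)$'' unwinds to a numerical inequality between $c_{0,0}(E_{k,\Lambda})$ (the weight of $\lambda$ in $K_X$, contributing the $n$) and the coefficients $c_{1,\mu}(E_{k,\Lambda})$, $c_{1/4,\mu}(E_{k,\Lambda})$ measuring the branch locus. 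Summing the branch contributions over $\mu$ and using the symmetry of the Eisenstein coefficients under $\mu \mapsto -\mu$, the $H_{1,\mu}$-part collapses to a multiple of $c_{1,0}(E_{k,\Lambda})$; the factor $\tfrac14$ in the displayed inequality comes from the $\tfrac12$ in the canonical-bundle formula combined with the order-$2$ ramification along the reflective divisor. This yields precisely the condition $n\,c_{0,0}(E_{k,\Lambda}) + \tfrac14 c_{1,0}(E_{k,\Lambda}) < 0$.

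The second step is to invoke Proposition~\ref{prop: uniruled with branch}: once $K_X$ sits strictly on the opposite side of the $\lambda$-hyperplane from the NL cone, one can find an effective Heegner (or primitive Heegner) divisor $D$ and a positive rational $\epsilon$ with $K_X + \epsilon D$ anti-effective (a negative multiple of something effective), hence $-K_X$ big up to the boundary; more directly, a general curve in a covering family of rational curves --- produced by moving in the linear system of a sufficiently positive Heegner divisor near $\lambda$, where the cone is ``fat'' --- meets $K_X$ negatively, forcing uniruledness by the Miyaoka--Mori criterion. The cleanest route is to apply the general proposition verbatim, since it is already stated as available; the work here is purely the translation of its hypothesis into the Eisenstein-coefficient inequality.

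The main obstacle I anticipate is bookkeeping the boundary and the precise shape of the branch divisors of $\pi$ on a chosen (e.g.\ toroidal) compactification $\overline{X}$: the canonical-bundle formula acquires boundary terms, and one must be sure that these do not affect the side of the $\lambda$-hyperplane --- i.e.\ that the boundary divisors pair nonnegatively (or controllably) with the relevant functional, so the sign of the inequality on the interior part is what governs uniruledness. A secondary subtlety is correctly accounting for \emph{all} ramification divisors: besides the $(-2)$-reflective ones there are the divisors fixed by reflections in vectors of square $-2d$ with divisibility $d$ (here $d=2$ appears), and one has to check that their contribution, once summed against the Eisenstein functional, is either subsumed into the $c_{1/4,\mu}$ terms already absorbed or is sign-favorable; getting the combinatorial factor $\tfrac14$ exactly right is the delicate point, but it is a finite, explicit computation with the Weil representation and the Eisenstein series, not a conceptual difficulty.
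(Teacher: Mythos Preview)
Your overall architecture is right: reduce to Proposition~\ref{prop: uniruled with branch} by expressing $K_X = n\lambda - \tfrac{1}{2}\mathrm{Br}(\pi)$ and testing against the Eisenstein functional. But the step where you obtain the precise quantity $\tfrac{1}{4}c_{1,0}(E)$ is not correct as written, and the missing idea is the one that makes the argument work.

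You describe the condition in the Proposition as an \emph{exact} translation of the hypothesis of Proposition~\ref{prop: uniruled with branch}, obtained by ``summing the branch contributions over $\mu$'' so that ``the $H_{1,\mu}$-part collapses to a multiple of $c_{1,0}(E_{k,\Lambda})$''. This is not what happens. First, the $(-2)$-reflective branch component involves only $\mu=0$ (the reflecting vectors lie in $\Lambda$ itself), so there is no sum over $\mu$ to collapse; the reduced $(-2)$-branch is exactly $\tfrac{1}{2}H_{-1,0}$, and together with the $\tfrac{1}{2}$ from the canonical formula this gives the $\tfrac{1}{4}$. Second, and more importantly, the full branch divisor $\mathrm{Br}(\pi)$ generally contains \emph{further} components (the higher-divisibility reflective divisors you mention), and these are lattice-dependent and not ``absorbed'' into the displayed inequality. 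The paper does not compute them. Instead, the key observation is that the Eisenstein coefficients $c_{m,\mu}(E_{k,\Lambda})$ are \emph{negative} for all $m>0$, so each additional branch term only decreases the left-hand side of the inequality in Proposition~\ref{prop: uniruled with branch}. Hence
\[
n\,c_{0,0}(E)+\tfrac{1}{2}\sum_i \alpha_{m_i,\mu_i}\,c_{m_i,\mu_i}(E)\;\le\; n\,c_{0,0}(E)+\tfrac{1}{4}\,c_{1,0}(E),
\]
and negativity of the right-hand side is a \emph{sufficient} (not equivalent) condition. Your ``sign-favorable'' remark at the end is exactly the point, but it is the whole argument, not a subtlety to be checked afterwards.

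A minor correction: Proposition~\ref{prop: uniruled with branch} does not produce covering rational curves via Miyaoka--Mori from a linear system near $\lambda$; it shows $K_Y\cdot(\eta^*\varepsilon^*\lambda)^{n-1}<0$ on a resolution of a toroidal compactification, so $K_Y$ is not pseudo-effective and one concludes by BDPP.
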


\subsection{Uniruledness results}
Mukai in a celebrated series of papers \cites{Muk88,Muk92,Muk06,Muk10,Muk16} constructed unirational parameterizations of $\mathcal{F}_{2d}$ for low degrees. This has been recently improved by Farkas--Verra in \cites{FV18, FV21}. The first examples of higher dimensional projective hyperk\"{a}hler varieties were exhibited in \cite{Bea83}: they are generalized Kummer varieties and Hilbert schemes of points on K3 surfaces. Polarized varieties of these two types deform in $4$ and $20$ dimensional moduli spaces respectively. 

The problem of exhibiting a projective realization of a generic such object is intimately related with rationality properties of the corresponding moduli space.  Although constructions of ${\rm Kum}_n$ and ${\rm{K3}}^{[n]}$-type hyperk\"{a}hler varieties were exhibited at the same time, unirational parameterizations are available only for some moduli spaces of hyperk\"{a}hler varieties of ${\rm{K3}}^{[n]}$-type, see \cites{BD85, OG06, IR01, IR07, DV10, BLM+21}. Constructing unirational parameterizations in low degree for moduli spaces of other types of hyperk\"{a}hler varieties (e.g. generalized Kummer, ${\rm{OG}}6$, and ${\rm{OG}}10$ types) presents a challenge where, as far as we know, not a single explicit construction is known. Here we consider the simpler problem of establishing uniruledness and focus on the generalized Kummer and ${\rm{OG}}6$-type cases (our methods do not yield new results in the case of ${\rm{OG}}10$-type).

In Section \ref{sec: uniruled}, we consider the moduli spaces $\mathcal{M}_{{\rm{OG6}},2d}^{\gamma}$ and $\mathcal{M}_{{\rm{Kum}}_n,2d}^\gamma$, which are the period domain partial compactifications of the moduli spaces $\left(\mathcal{M}_{{\rm{OG6}},2d}^{\gamma}\right)^\circ$ and $\left(\mathcal{M}_{{\rm{Kum}}_n,2d}^\gamma\right)^\circ$ parameterizing primitively polarized hyperk\"{a}hler sixfolds of OG6-type respectively $2n$-folds of ${\rm{Kum}}_n$-type with a primitive polarization of degree $2d$ and divisibility $\gamma$. We remark that the moduli space $\mathcal{M}_{{\rm{OG6}},2d}^\gamma$ is always irreducible and in the case $\gamma=2$ it is non-empty only when $d\equiv -1,-2 \mod 4$. Similarly, setting $d=1$ and $\gamma\in\{1,2\}$, the moduli space $\mathcal{M}_{{\rm{Kum}}_n,2}^\gamma$ is irreducible and in the case $\gamma=2$ its nonempty only when $n\equiv 2\mod 4$.

Theorems \ref{sec7:them:OG6uniruled} and \ref{sec7:thm:kum_uniruled} establish the following uniruledness results:
\begin{theorem}
\label{int:thm:HKs}
The moduli space $\mathcal{M}_{{\rm{OG6}},2d}^\gamma$ is uniruled in the following cases
\begin{itemize}

\item[(i)] when $\gamma=1$ for $d\leq12$,
\item[(ii)] when $\gamma=2$ for $t\leq10$ and $t=12$ with $d=4t-1$,
\item[(iii)] when $\gamma=2$ for $t\leq9$ and $t=11, 13$ with $d=4t-2$.
\end{itemize}
The moduli spaces $\mathcal{M}_{{\rm{Kum}}_n,2}^1$ and $\mathcal{M}_{{\rm{Kum}}_n,2}^2$ are uniruled in the following cases:
\begin{itemize}
\item[(i)] when $\gamma=1$ for $n\leq 15$ and $n=17,20$,
\item[(ii)] when $\gamma=2$ for $t\leq 11$ and $t=13,15,17,19$, where $n=4t-2$.
\end{itemize}
\end{theorem}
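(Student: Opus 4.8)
The plan is to deduce Theorem~\ref{int:thm:HKs} from the numerical uniruledness criterion of Proposition~\ref{prop: general uniruledness}, applied to the period lattices attached to each moduli space. As set up in Section~\ref{sec: uniruled}, each of $\mathcal{M}_{{\rm{OG6}},2d}^{\gamma}$ and $\mathcal{M}_{{\rm{Kum}}_n,2}^{\gamma}$ is an orthogonal modular variety $\mathcal{D}_{\Lambda}/\Gamma$, where $\Lambda=h^{\perp}$ is the orthogonal complement of a primitive polarization class $h$ of square $2d$ and divisibility $\gamma$ inside the relevant Beauville--Bogomolov lattice, and $\Gamma$ is the image of the monodromy group, which contains $\widetilde{{\rm{O}}}^+(\Lambda)$ as a finite-index subgroup. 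Since uniruledness descends along the finite quotient map $\mathcal{D}_\Lambda/\widetilde{{\rm{O}}}^+(\Lambda)\twoheadrightarrow\mathcal{D}_\Lambda/\Gamma$, it suffices to prove that $\mathcal{D}_\Lambda/\widetilde{{\rm{O}}}^+(\Lambda)$ is uniruled, for which we only need to check the hypotheses and the numerical inequality of Proposition~\ref{prop: general uniruledness}.

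The first step is to make the lattices $\Lambda$ explicit. For ${\rm{OG6}}$-type the Beauville--Bogomolov lattice is $U^{\oplus 3}\oplus\langle -2\rangle^{\oplus 2}$, of signature $(3,5)$; when $\gamma=1$ one may take $h$ inside a single hyperbolic plane, so $\Lambda\cong U^{\oplus 2}\oplus\langle -2\rangle^{\oplus 2}\oplus\langle -2d\rangle$, while when $\gamma=2$ a primitive vector of square $2d$ and divisibility $2$ exists precisely for $d\equiv -1,-2\bmod 4$ — placed inside $U\oplus\langle -2\rangle$ when $d=4t-1$ and inside $U\oplus\langle -2\rangle^{\oplus 2}$ when $d=4t-2$ — so that $\Lambda\cong U^{\oplus 2}\oplus K$ with $K$ an explicit negative definite lattice of rank $3$. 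For ${\rm{Kum}}_n$-type, where $d=1$, the lattice is $U^{\oplus 3}\oplus\langle -2(n+1)\rangle$ of signature $(3,4)$; the analogous analysis gives $\Lambda\cong U^{\oplus 2}\oplus\langle -2\rangle\oplus\langle -2(n+1)\rangle$ for $\gamma=1$, and, exactly when $n\equiv 2\bmod 4$, $\Lambda\cong U^{\oplus 2}\oplus K'$ for $\gamma=2$ with $K'$ an explicit negative definite lattice of rank $2$. In every case $\Lambda$ has signature $(2,5)$, respectively $(2,4)$ — so the relevant weight of Proposition~\ref{prop: general uniruledness} is $7/2$, respectively $3$ — and the descriptions above exhibit $\Lambda$ with an orthogonal summand $U^{\oplus 2}$, hence $\Lambda$ splits off two copies of the hyperbolic plane; thus the hypotheses of that proposition are met.

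The heart of the argument is then a finite computation. Since the weights $7/2$ and $3$ both exceed $2$, the vector-valued Eisenstein series $E_{7/2,\Lambda}$, respectively $E_{3,\Lambda}$, is given by an absolutely convergent series, and its Fourier coefficients depend only on the discriminant form of $\Lambda$; they are computed by the explicit Bruinier--Kuss-type formula in terms of local densities and special values of Dirichlet $L$-functions (integral weight in the ${\rm{Kum}}_n$ case, half-integral weight in the ${\rm{OG6}}$ case). For each $(d,\gamma)$ and each $(n,\gamma)$ in the lists of Theorem~\ref{int:thm:HKs} we would extract the coefficients $c_{0,0}$ and $c_{1,0}$ with the help of the Sage implementation of \cite{heegner_cones}, verify that
\[
5\,c_{0,0}(E_{7/2,\Lambda})+\tfrac14\,c_{1,0}(E_{7/2,\Lambda})<0\qquad\text{resp.}\qquad 4\,c_{0,0}(E_{3,\Lambda})+\tfrac14\,c_{1,0}(E_{3,\Lambda})<0,
\]
and conclude uniruledness of $\mathcal{D}_\Lambda/\widetilde{{\rm{O}}}^+(\Lambda)$, and hence of the moduli space, by Proposition~\ref{prop: general uniruledness}.

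The main obstacle is the sharpness of this inequality at the boundary of the listed ranges. The lattice bookkeeping and the reduction to $\widetilde{{\rm{O}}}^+(\Lambda)$ are routine once the monodromy groups of $\gamma$-polarized ${\rm{Kum}}_n$- and ${\rm{OG6}}$-type manifolds are known, and the Eisenstein coefficients come from a deterministic formula; what makes the statement delicate is that the inequality holds only on an irregular finite set — e.g.\ $t=11$ is excluded but $t=12$ is included for ${\rm{OG6}}$ with $\gamma=2$ and $d=4t-1$, while $n=16,18,19$ are excluded but $n=17,20$ are included for ${\rm{Kum}}_n$ with $\gamma=1$ — so no uniform estimate is available and each case must genuinely be computed. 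Outside the listed cases the left-hand side is non-negative and Proposition~\ref{prop: general uniruledness} yields nothing, so the method is exhausted precisely where the theorem stops.
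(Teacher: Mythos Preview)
Your approach is essentially the paper's own: reduce to the $\widetilde{\rm O}^+(\Lambda_h)$-quotient, invoke Proposition~\ref{prop: general uniruledness}, and verify the inequality $n\,c_{0,0}+\tfrac14 c_{1,0}<0$ for each listed lattice via the Bruinier--Kuss formula (implemented in \cite{weilrep}, not \cite{heegner_cones}). One point needs care: your claim that the polarized monodromy group ``contains $\widetilde{\rm O}^+(\Lambda)$ as a finite-index subgroup'' is correct for ${\rm OG6}$ (Lemma~\ref{sec7:lemma:monOG6}) but \emph{false} for ${\rm Kum}_n$, where ${\rm Mon}^2(\Lambda,h)=\langle\widetilde{\rm SO}^+(\Lambda_h),\sigma_\kappa\rangle$ and $\sigma_\kappa\notin\widetilde{\rm O}^+(\Lambda_h)$ (Lemma~\ref{sec7:lemma:first_MonKum2_2}); the needed dominant map exists only because, for $d=1$, the two groups agree \emph{projectively} (Lemma~\ref{sec7:lemma:MonKum2_2}), so the quotients coincide. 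With that correction your plan matches the paper's proofs of Theorems~\ref{sec7:them:OG6uniruled} and~\ref{sec7:thm:kum_uniruled}.
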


An immediate consequence of the work of H. Wang and the fourth author \cite{WW21}*{Theorem 5.4} together with Lemmas \ref{sec7:lemma:monOG6} and \ref{sec7:lemma:MonKum2_2}, appearing here, is the rationality of $\mathcal{M}_{{\rm{Kum}}_2,2}^2$ and unirationality of $\mathcal{M}_{{\rm{OG6}},6}^2$ and $\mathcal{M}_{{\rm{OG6}},2}^1$. In the case of the rational moduli space $\left(\mathcal{M}_{{\rm{Kum}}_2,2}^2\right)^\circ$ we moreover establish the following.

\begin{theorem}\label{thm: quasi-affine}
The moduli space $\left(\mathcal{M}_{{\rm{Kum}}_2,2}^2\right)^\circ$ parameterizing polarized hyperk\"{a}hler fourfolds with polarization of degree $2$ and divisibility $2$ is quasi-affine.  
\end{theorem}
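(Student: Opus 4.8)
The plan is to realise $\left(\mathcal{M}_{{\rm{Kum}}_2,2}^2\right)^\circ$ as an open subvariety of an affine variety. As a first step, I would invoke the global Torelli theorem for hyperk\"ahler manifolds together with the monodromy computation of Lemma~\ref{sec7:lemma:MonKum2_2} to identify, via the period map, $\left(\mathcal{M}_{{\rm{Kum}}_2,2}^2\right)^\circ$ with the complement $X\setminus\Delta$ of a \emph{finite} union $\Delta$ of primitive Heegner divisors inside the orthogonal modular variety $X=\mathcal{D}_\Lambda\big/\Gamma$, where $\Lambda$ is the relevant even lattice of signature $(2,4)$ (which splits off two copies of $U$) and $\Gamma$ is the monodromy group. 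Here $\Delta$ is the locus of periods for which the marked class of degree $2$ and divisibility $2$ fails to be ample, so that $\Delta$ is controlled by the finite classification of wall/extremal classes on ${\rm Kum}_2$-type fourfolds; I would make this list explicit for the polarization type at hand. If the period map is only generically injective, one replaces $X$ by a finite quotient, which is harmless since finite quotients of quasi-affine varieties are quasi-affine.

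The second step is a reduction: it suffices to produce a single holomorphic modular form $\Psi$ of positive weight $m$ for $\Gamma$ whose zero divisor on $X$ is supported on $\Delta$. Indeed, let $\overline{X}^{\,\mathrm{BB}}$ be the Baily--Borel compactification of $X$; it is projective, the Hodge line bundle $\mathcal{L}$ is ample on it, and since $n=4$ its boundary has codimension $\ge 3$, in particular no divisorial component. Hence $\Psi$ extends to a section of $\mathcal{L}^{\otimes m}$ on $\overline{X}^{\,\mathrm{BB}}$ whose divisor is the closure of $\operatorname{div}_X(\Psi)$; being a section of an ample line bundle, its zero locus is the support of an effective ample divisor, so the classical fact that the complement of such a locus in a projective variety is affine gives that $\overline{X}^{\,\mathrm{BB}}\setminus\{\Psi=0\}$ is affine. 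Since $\operatorname{div}_X(\Psi)$ is supported on $\Delta$,
\[
\left(\mathcal{M}_{{\rm{Kum}}_2,2}^2\right)^\circ=X\setminus\Delta\ \subseteq\ \overline{X}^{\,\mathrm{BB}}\setminus\{\Psi=0\},
\]
and the left-hand side, being the complement of a divisor in $X$ with $X$ open in $\overline{X}^{\,\mathrm{BB}}$, is an open subvariety of the affine variety on the right. Therefore $\left(\mathcal{M}_{{\rm{Kum}}_2,2}^2\right)^\circ$ is quasi-affine.

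It remains to produce $\Psi$. The cleanest situation is when ${\rm Pic}_\mathbb{Q}(X)$ has rank one, which for this small lattice one can check directly from Bruinier's dimension formula \cite{Bru02b} and the routines underlying Theorem~\ref{int:thm:1}: then every primitive Heegner divisor, in particular every component $\Delta_i$ of $\Delta$, is $\mathbb{Q}$-proportional to the Hodge class $\lambda$ with positive coefficient, so a suitable positive multiple of $\Delta_i$ is the divisor of a weight-$m_i$ modular form $\Psi_i$, and one takes $\Psi=\prod_i\Psi_i$. If ${\rm Pic}_\mathbb{Q}(X)$ has rank larger than one, I would instead produce $\Psi$ as a Borcherds product \cite{Bor98} whose divisor is a positive combination of the components of $\Delta$; its existence is an obstruction computation in the finite-dimensional space of vector-valued cusp forms of weight $1+\tfrac n2=3$ for the Weil representation of $\Lambda$, which can be carried out explicitly with \cite{heegner_cones}. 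Alternatively, using the explicit free polynomial model for the ring of modular forms of $\Gamma$ from \cite{WW21} one may take $\Psi$ to be the modular Jacobian of a system of generators, whose zero divisor is the reflective/branch divisor of $X$, and then verify that this divisor consists of walls lying in $\Delta$.

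The hard part is this last matching: one must reconcile the explicit classification of ${\rm Kum}_2$-type walls for a polarization of degree $2$ and divisibility $2$ with the explicit zero divisor of an available modular form on $X$, ruling out spurious vanishing along Heegner divisors outside $\Delta$; the lattice identification of Lemma~\ref{sec7:lemma:MonKum2_2} is exactly what makes the two computations comparable. Once this is in place the quasi-affineness follows formally from the reduction above, and, as a consequence, any family of polarized ${\rm Kum}_2$-type fourfolds of degree $2$ and divisibility $2$ over a projective base is isotrivial, since any morphism from a projective variety to a quasi-affine variety is constant.
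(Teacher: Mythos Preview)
Your proposal is correct and follows essentially the same route as the paper: use Torelli and Lemma~\ref{sec7:lemma:MonKum2_2} to realise the moduli space as an open subset of $X=\mathcal{D}_{\Lambda_h}/\widetilde{\rm O}^+(\Lambda_h)$, observe that ${\rm Pic}_\mathbb{Q}(X)$ has rank one (equivalently $S_{3,\Lambda_h}=0$), and conclude that any Heegner component of the complement is a positive multiple of $\lambda$, hence ample on the Baily--Borel model, so the moduli space sits inside an affine open.

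The paper is more direct than your outline in two respects. First, it does not attempt to describe the full complement $\Delta$ or to build a $\Psi$ whose divisor is supported on all of $\Delta$: it suffices to exhibit a \emph{single} primitive Heegner divisor $P_\rho$ missed by the period map, and in rank one this alone is ample. Second, the ``hard part'' you flag --- matching wall classes with Heegner divisors --- is dispatched in one line: taking $h=2(e+f)-\delta$, the vector $\rho=3f-\delta\in\Lambda_h$ has $\langle\rho,\rho\rangle=-6$ and ${\rm div}_\Lambda(\rho)=3$, so by the ${\rm Kum}_2$ wall classification \cite{Yos16} the divisor $P_\rho=P_{-1/3,\,v/3}$ lies in the complement of the image of the period map. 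Your alternative constructions (Borcherds products, the modular Jacobian from \cite{WW21}) are not needed, and the Jacobian route would in fact require an extra check that the reflective divisor is contained in $\Delta$, which is not automatic for ${\rm Kum}_2$-type walls.
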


When considering projective hyperk\"{a}hler varieties, it is natural to study families of such. In particular, if $\mathcal{X}\longrightarrow B$ is a non isotrivial family of polarized hyperk\"{a}hler varieties of certain type, can one say something about $B$? This was first treated in \cite{BKPS98}*{Theorem 1.3} for K3 surfaces of degree two where it is established that $B$ cannot be projective. This problem was further studied in \cite{DM22}. An immediate consequence of Theorem \ref{thm: quasi-affine} is:

\begin{corollary}\label{cor: isotrivial}
Any family $f:\mathcal{X}\longrightarrow B$ over a projective base $B$ of polarized hyperk\"{a}hler fourfolds of ${\rm{Kum}}_2$-type with polarization of degree $2$ and divisibility $2$ is isotrivial. 
\end{corollary}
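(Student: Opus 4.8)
The plan is to derive the corollary directly from Theorem~\ref{thm: quasi-affine}, using that a projective variety cannot map non-constantly to a quasi-affine one.

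First I would set up the classifying morphism. A family $f\colon\mathcal{X}\to B$ as in the statement induces on $R^2f_*\mathbb{Z}$, together with its relative polarization and the lattice-theoretic parallel-transport datum, a polarized variation of Hodge structure of weight two. Via the global Torelli theorem for $\mathrm{Kum}_2$-type hyperk\"ahler manifolds --- in the form that identifies $\left(\mathcal{M}_{{\rm{Kum}}_2,2}^2\right)^\circ$ with an open subvariety of the orthogonal modular variety $\mathcal{D}_\Lambda\big/\Gamma$ for the relevant $\Lambda$ and $\Gamma$ --- this yields a period map
\[
\psi\colon B\longrightarrow \left(\mathcal{M}_{{\rm{Kum}}_2,2}^2\right)^\circ.
\]
Because $f$ is a family of \emph{smooth} polarized hyperk\"ahler fourfolds, the image of $\psi$ lands in the open stratum $\left(\mathcal{M}_{{\rm{Kum}}_2,2}^2\right)^\circ$ and not in the boundary of the period-domain partial compactification. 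Moreover $\psi$ is a genuine morphism of quasi-projective varieties: it is holomorphic, and by Borel's extension theorem (algebraicity of maps to arithmetic quotients) it is algebraic.

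Next I would run the rigidity argument. Since $B$ is projective, $\psi(B)$ is a closed, hence proper, subvariety of $\left(\mathcal{M}_{{\rm{Kum}}_2,2}^2\right)^\circ$. By Theorem~\ref{thm: quasi-affine} the latter is quasi-affine, so it admits a locally closed embedding into some affine space $\mathbb{A}^N$; under this embedding $\psi(B)$ becomes a proper locally closed subvariety of $\mathbb{A}^N$, hence is closed in $\mathbb{A}^N$ and proper, hence finite. As $B$ is connected (argue component by component otherwise), $\psi$ is constant. A constant period map means that all fibers $(\mathcal{X}_b,\lambda_b)$ define the same point of the coarse moduli space, so they are pairwise isomorphic as polarized hyperk\"ahler manifolds; hence $f$ is isotrivial. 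If one insists on isotriviality in the stronger sense of triviality after a finite \'etale base change, one uses that the automorphism group of a polarized hyperk\"ahler manifold is finite, so that after pulling back along a suitable finite cover of $B$ the family becomes trivial.

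The hard part here is not the rigidity step, which is formal once the target is known to be quasi-affine, but rather the bookkeeping that turns the period map into an honest algebraic morphism landing literally inside $\left(\mathcal{M}_{{\rm{Kum}}_2,2}^2\right)^\circ$: one must invoke global Torelli for $\mathrm{Kum}_2$-type in the precise lattice-polarized form used to define this moduli space, track the monodromy and divisibility constraints (as in Lemma~\ref{sec7:lemma:MonKum2_2}), and cite algebraicity of the period map. Everything after that is immediate from Theorem~\ref{thm: quasi-affine}.
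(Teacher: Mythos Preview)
Your proposal is correct and follows exactly the approach the paper indicates: the paper simply declares the corollary an ``immediate consequence'' of Theorem~\ref{thm: quasi-affine}, and your write-up spells out precisely that deduction (classifying morphism to the quasi-affine moduli space, constancy forced by projectivity of $B$, isotriviality via Torelli). Your version is in fact more detailed than what the paper provides.
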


\subsection*{Acknowledgements} This paper benefited from helpful discussions and correspondence with the
following people who we gratefully acknowledge: Daniele Agostini, Emma Brakkee, Jan Hendrik Bruinier, Yagna Dutta, Gabi Farkas, Paul Kiefer, Giovanni Mongardi, Gregory Sankaran, Preston Wake, and Riccardo Zuffetti. We would also like to thank the anonymous referee for helpful comments and corrections.

\section{Preliminaries}

Let $\Lambda$ be an even lattice of signature $
(2,n)$ with bilinear form given by $\langle\cdot,\cdot\rangle$. The bilinear form extends to $\Lambda_{\mathbb{C}}$ and we let $\mathcal{D}$ denote one of the two components of
\[
\left\{[Z]\in \mathbb{P}\left(\Lambda_{\mathbb{C}}\right)\left|\langle Z,Z\rangle=0, \langle Z,\overline{Z}\rangle>0 \right.\right\}.
\]
Further, we denote by $\Gamma$ a finite index subgroup of the group ${\rm{O}}^+\left(\Lambda\right)$ of automorphisms of $\Lambda$ fixing the component $\mathcal{D}$. The quotient of $\mathcal{D}$ by $\Gamma$ is called an {\textit{orthogonal modular variety}}. It is a quasi-projective variety \cite{BB66} which for some choices of lattice $\Lambda$ and arithmetic groups $\Gamma$ gives a partial compactification of a coarse moduli space of polarized varieties. The first case of interest in this paper is when
\[
\Lambda_{2d}=U^{\oplus 2}\oplus E_8(-1)^{\oplus 2}\oplus \mathbb{Z}\ell,\;\;\hbox{with}\;\;\langle\ell,\ell\rangle=-2d
\]
and the arithmetic group $\Gamma=\widetilde{{\rm{O}}}^+\left(\Lambda\right)$ is the group of orientation preserving isomorphisms of $\Lambda$ acting trivially on the discriminant group $D(\Lambda)=\Lambda^\vee\big/\Lambda$. The quotient
\[
\mathcal{F}_{2d}=\mathcal{D}\big/\widetilde{{\rm{O}}}^+\left(\Lambda_{2d}\right)
\]
is the moduli space for quasi-polarized K3 surfaces $(S,H)$, i.e., where $H$ is primitive, big, and nef, of degree $H^2=2d$. 

As mentioned in the introduction, a very general point $(S,H)\in \mathcal{F}_{2d}$ has Picard group ${\rm{Pic}}(S)=\mathbb{Z}H$, and a large source of geometric divisors comes from imposing the condition that the Picard rank jumps. These are {\textit{Noether--Lefschetz divisors}}. There are different characterizations of these divisors: by keeping track of a rank two lattice embedding $L\hookrightarrow {\rm{Pic}}(S)$, by imposing the existence of an extra class $\beta\in {\rm{Pic}}(S)$ with fixed intersections $(\beta^2, \beta\cdot H)=(2h-2,a)$, and by looking at images of hyperplanes in $\mathcal{D}$ via the quotient map 
\[
\pi_{2d}:\mathcal{D}\longrightarrow \mathcal{F}_{2d}.
\]
These are all equivalent approaches (see \cite{MP13}*{Section 1 and Lemma 3}). In what follows, we focus on the third approach. 

\subsection{Heegner and NL divisors}

We assume $\Gamma\subset \widetilde{\rm{O}}^+(\Lambda)$. Let $Q(x)=\frac{\langle x,x\rangle}{2}$ be the corresponding quadratic form. For fixed $v\in \Lambda^{\vee}\subset\Lambda_{\mathbb{Q}}$, we set 
\[
D_v=v^\perp\cap \mathcal{D}=\left\{[Z]\in \mathcal{D}\left|\langle Z,v\rangle=0\right.\right\}.
\]
Let $\mu+\Lambda\in \Lambda^\vee\big/\Lambda$ and $m\in Q(\mu)+\mathbb{Z}$ negative. Then the cycle 
\begin{equation}
\label{sec2:eq:Heegner}
\sum_{\substack{v\in \mu+\Lambda\\Q(v)=m}}D_v
\end{equation}
is $\Gamma$-invariant and descends to a $\mathbb{Q}$-Cartier divisor $H_{m,\mu}$ called a {\textit{Heegner divisor}}. In general, $H_{m,\mu}$ is neither reduced, nor irreducible. The existence of two vectors $v, v'\in \Lambda^{\vee}$ with the same square and discriminant class for which $D_v=D_{v'}$ is a source for non-reduced components of $H_{m, \mu}$. Similarly, several $\Gamma$-orbits of elements in $\Lambda^\vee$ with the same square and discriminant class give rise to several (possibly non-reduced) components.

Under the given assumption that $\Gamma\subset \widetilde{\rm{O}}^+\left(\Lambda\right)$, all the components of $H_{m,\mu}$ have multiplicity two if $\mu=-\mu$ in $\Lambda^\vee\big/\Lambda$ and all have multiplicity one otherwise. Further, the line bundle $\mathcal{O}(-1)$ on $\mathcal{D}\subset \mathbb{P}\left(\Lambda_{\mathbb{C}}\right)$ admits a natural $\Gamma$-action and descends to a $\mathbb{Q}$-line bundle $\lambda$ called the {\textit{Hodge bundle}}. One declares $H_{0,0}=-\lambda$. 

In the K3 case $\mathcal{F}_{2d}=\mathcal{D}\big/\widetilde{{\rm{O}}}^+\left(\Lambda_{2d}\right)$, Noether-Lefschetz divisors are often described as the reduced divisor obtained by taking the closure of the locus 
\[
\mathcal{D}_{h,a}\subset\mathcal{F}_{2d}
\]
of points $(S,H)$ for which there exists a class $\beta\in {\rm{Pic}}(S)$ not proportional to $H$ with $\beta^2=2h-2$ and $\beta\cdot H=a$. In this case \cite{MP13}*{Lemma 3}, if $d$ does not divide $a$:
\[\mathcal{D}_{h,a}=H_{-m,\mu}\;\;\hbox{ with }m=\frac{a^2}{4d}-(h-1),\;\;\hbox{ and }\mu=a\ell_*.
\]
Here $\ell_*=\frac{\ell}{2d}\in D(\Lambda_{2d})$ is the standard generator. If $d$ divides $a$, then $\mathcal{D}_{h,a}=\frac{1}{2}H_{m,\mu}$. One denotes by ${\rm{Pic}}_{\mathbb{Q}}^{H}\left(\mathcal{F}_{2d}\right)$ the subspace generated by all NL divisors $\mathcal{D}_{h,a}$, or equivalently, Heegner divisors $H_{m,\mu}$. Maulik--Pandharipande conjectured \cite{MP13}*{Conjecture 3} the equality 
\[
{\rm{Pic}}_{\mathbb{Q}}^{H}\left(\mathcal{F}_{2d}\right)={\rm{Pic}}_{\mathbb{Q}}\left(\mathcal{F}_{2d}\right).
\]
This is now a theorem:
\begin{theorem}\cite[Theorem 1.5]{BLMM17}, \cite[Remark 3.13, Corollary 3.18]{BZ24}
\label{thm:sec2:BLMM17}
Let $\Lambda$ be an even lattice of signature $(2,n)$ with $n\geq3$ splitting off two copies of the hyperbolic plane. Then the rational Picard group of $\mathcal{D}\big/\widetilde{{\rm{O}}}^+\left(\Lambda\right)$ is generated by Heegner divisors:
\[
{\rm{Pic}}_{\mathbb{Q}}^H\left(\mathcal{D}\big/\widetilde{{\rm{O}}}^+\left(\Lambda\right)\right)={\rm{Pic}}_{\mathbb{Q}}\left(\mathcal{D}\big/\widetilde{{\rm{O}}}^+\left(\Lambda\right)\right).
\]
\end{theorem}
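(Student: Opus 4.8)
The statement to prove is Theorem \ref{thm:sec2:BLMM17}, which asserts that for an even lattice $\Lambda$ of signature $(2,n)$ with $n\geq 3$ splitting off $U^{\oplus 2}$, the rational Picard group of $\mathcal{D}_\Lambda/\widetilde{\mathrm{O}}^+(\Lambda)$ is spanned by Heegner divisors. This is cited from \cite{BLMM17} and \cite{BZ24}, so it is not proven in this paper — but let me sketch how I would approach it.

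\medskip

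The plan is to use the theory of special cycles and the theta correspondence. First I would set up the generating series of Heegner divisors: by Borcherds' work (or Kudla--Millson theory), the formal $q$-series $\sum_{m,\mu} H_{m,\mu}\, q^m \mathfrak{e}_\mu$, with $H_{0,0} = -\lambda$, is a vector-valued modular form of weight $1+n/2$ valued in $\mathrm{Pic}_{\mathbb Q}(X)\otimes \mathbb{C}$ with respect to the Weil representation. Dually, this means that the span of Heegner divisors in $\mathrm{Pic}_{\mathbb Q}(X)$ is governed by the space $M_{1+n/2,\Lambda}$ of such modular forms, and a class in $\mathrm{Pic}_{\mathbb Q}(X)$ orthogonal (under any linear functional) to all Heegner divisors corresponds to a relation, hence to a cusp form or Eisenstein component. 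The key input is then the surjectivity of the Kudla--Millson lift (or equivalently, that $H^{1,1}$ of the relevant Shimura variety is exhausted by special cycle classes), which is precisely the main theorem of Bergeron--Li--Millson--Moeglin \cite{BLMM17}: they prove, using automorphic methods (the theory of Eisenstein series, Arthur's classification, and the Rallis inner product formula) that for orthogonal Shimura varieties attached to lattices of sufficiently large Witt rank — here the hypothesis that $\Lambda$ splits off two hyperbolic planes, i.e.\ Witt rank $\geq 2$ — the special cycle classes in each codimension (in particular codimension one) span the corresponding cohomology. Combined with the fact that for these modular varieties $\mathrm{Pic}_{\mathbb Q}(X)$ injects into $H^2(X,\mathbb Q)$ and the Noether--Lefschetz / Heegner classes are exactly the special divisor classes, this gives the statement for $\Gamma = \widetilde{\mathrm{O}}^+(\Lambda)$.

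\medskip

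The role of \cite{BZ24} in the citation (Bruinier--Zuffetti, ``Remark 3.13, Corollary 3.18'') is, I expect, to handle subtleties that the BLMM theorem leaves open: namely passing from cohomology classes to actual divisor classes in the Picard group, dealing with the boundary and the non-compactness of $X$ (one works with a toroidal or Baily--Borel compactification and must check Heegner divisors extend appropriately), and reconciling normalizations — in particular the precise statement that the \emph{rational} Picard group, not merely its image in cohomology, is generated. Bruinier--Zuffetti also reprove or streamline the span statement using the explicit structure of vector-valued modular forms and Borcherds products, which is more in the spirit of this paper's methods; their Corollary 3.18 would be the clean packaging of ``$\mathrm{Pic}^H_{\mathbb Q} = \mathrm{Pic}_{\mathbb Q}$'' under the stated lattice hypotheses.

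\medskip

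The hard part — and the reason this is quoted rather than reproved — is the automorphic surjectivity statement of \cite{BLMM17}: showing that the Kudla--Millson theta lift from cusp forms of weight $1+n/2$ for the Weil representation onto the special-cycle part of $H^2$ is surjective. This requires the full force of the Rallis inner product formula to compute Petersson norms of theta lifts in terms of special values of $L$-functions, Arthur's endoscopic classification to control which automorphic representations of the orthogonal group can contribute to $H^{1,1}$, and a non-vanishing argument for the relevant $L$-values; the Witt rank $\geq 2$ hypothesis is exactly what makes the local theta lifts and the global argument work in the ``stable range'' needed. Everything downstream — reducing the Picard-group statement to the cohomological one, and handling compactifications — is comparatively routine, and for the purposes of the present paper it suffices to invoke the cited results as a black box, since Theorem \ref{int:thm:1} builds on Theorem \ref{thm:sec2:BLMM17} only to convert statements about coefficient-extraction functionals on modular forms into statements about the NL cone.
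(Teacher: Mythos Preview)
Your assessment is correct: the paper does not prove Theorem~\ref{thm:sec2:BLMM17} at all; it is stated purely as a citation to \cite{BLMM17} and \cite{BZ24}, and the surrounding text merely records its role (surjectivity of the map $\varphi$ in Theorem~\ref{sec2:thm:Pic=Mod}). Your sketch of the underlying arguments---Kudla--Millson theta correspondence, the surjectivity result of Bergeron--Li--Millson--Moeglin via Arthur's classification and the Rallis inner product formula, and the refinements of Bruinier--Zuffetti for the passage to the Picard group---is an accurate summary of what the cited works do, and nothing further is needed here.
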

Note that the above theorem in particular implies that irreducible components of $H_{m,\mu}$ must be linear combinations of other Heegner divisors. This relation is explicit and follows from Eichler's criterion \cite{GHS09}*{Proposition 3.3}, \cite{Son23}*{Proposition 2.15}: {\textit{if $\Lambda$ splits off two copies of the hyperbolic lattice $U$, then the $\widetilde{\rm{SO}}^{+}\left(\Lambda\right)$-orbit of a primitive element $v\in \Lambda^\vee$ is determined by $Q(v)=m$ and $v+\Lambda\in \Lambda^\vee\big/\Lambda$}}. This leads to the following definition (see \cites{Pet15, BM19}). The {\textit{primitive Heegner divisor}} $P_{\Delta,\delta}$ is the image via the $\Gamma$-quotient map $\pi:\mathcal{D}\longrightarrow \mathcal{D}\big/\Gamma$ of the cycle
\begin{equation}
\label{eq:sec2:P_{m,mu}}
\sum_{\substack{v\in \delta+\Lambda\;\;primitive\\Q(v)=\Delta}}D_v.
\end{equation}
Under the assumption that $\Lambda$ splits off two copies of $U$ and $\Gamma=\widetilde{\rm{O}}^+\left(\Lambda\right)$, the divisor $P_{\Delta,\delta}$ is always irreducible. Moreover, in this situation, the divisor $P_{\Delta,\delta}$ is reduced when $\delta\neq -\delta$ in $D(\Lambda)$ and has multiplicity two otherwise. The relation between Heegner and primitive Heegner divisors \cite{BM19}*{Equations (17) and (18)} is:
\begin{equation}
\label{sec2:eq:H-P}
H_{m,\mu}=\sum_{\substack{r\in\mathbb{Z}_{>0}\\r^2\mid m}}\sum_{\substack{\delta\in D(\Lambda)\\ r\delta=\mu}}P_{\frac{m}{r^2}, \delta}\;\;\hbox{and}\;\;P_{\Delta,\delta}=\sum_{\substack{r\in\mathbb{Z}_{>0}\\r^2\mid \Delta}}\mu(r)\sum_{\substack{\sigma\in D(\Lambda)\\ r\sigma=\delta}}H_{\frac{\Delta}{r^2},\sigma},
\end{equation}
where $r^2\mid m$ denotes that there is a class $\delta \in D(\Lambda)$ such that $m/r^2\in Q(\delta)+\mathbb{Z}$ and 
the $\mu(\cdot)$ in the second equation stands for the M\"{o}bius function.

As stated in the introduction, our main object of study is the {\textit{NL cone}} ${\rm{Eff}}^{NL}\left(\mathcal{D}\big/\Gamma\right)$ generated by primitive Heegner divisors, or equivalently irreducible components of Noether-Lefschetz divisors.

\subsection{Rational Picard group of orthogonal modular varieties}\label{ss:Pic}

A recently established key feature of our setting is that the $\mathbb{Q}$-vector space ${\rm{Pic}}_{\mathbb{Q}}\left(\mathcal{D}\big/\widetilde{\rm{O}}^+\left(\Lambda\right)\right)$ can be seen as a space of vector-valued modular forms. This is what we explain now. 

Let $\Lambda$ be an even lattice of signature $(2,n)$ with quadratic form $Q$. The discriminant group $D(\Lambda)=\Lambda^\vee\big/\Lambda$ is a finite abelian group endowed with an induced $\mathbb{Q}\big/\mathbb{Z}$-valued quadratic form. The group algebra $\mathbb{C}\left[D(\Lambda)\right]$ is finitely generated and we denote the standard generators by $\{\mathfrak{e}_{\mu}\mid \mu\in D(\Lambda)\}$. The {\textit{metaplectic group}} ${\rm{Mp}}_2(\mathbb{Z})$ is a double cover of ${\rm{SL}}_2(\mathbb{Z})$ defined as the group of pairs $(A,\phi(\tau))$ where $A=\left(\begin{array}{cc}a&b\\c&d\end{array}\right)\in {\rm{SL}}_2\left(\mathbb{Z}\right)$, and $\phi(\tau)$ is a choice of a square root of the function $c\tau+d$ on the upper half plane $\mathbb{H}$. The product in ${\rm{Mp}}_2(\mathbb{Z})$ is given by $(A_1,\phi_1(\tau))\cdot(A_2,\phi_2(\tau))=(A_1A_2, \phi_1(A_2\tau)\phi_2(\tau))$. There is a canonical representation of ${\rm{Mp}}_2(\mathbb{Z})$ attached to $\Lambda$ called the {\textit{Weil representation}} $\rho_\Lambda:{\rm{Mp}}_2(\mathbb{Z})\longrightarrow {\rm{GL}}\left(\mathbb{C}\left[D(\Lambda)\right]\right)$. See \cite{Bor98}*{Section 4} for a concrete description in terms of the standard generators of ${\rm{Mp}}_2(\mathbb{Z})$. Let $k\in\frac{1}{2}\mathbb{Z}$. A holomorphic function 
\[
f:\mathbb{H}\longrightarrow \mathbb{C}\left[D(\Lambda)\right]
\]
is called a {\textit{modular form of weight $k$ and type $\rho_{\Lambda}$}} if for all $g=(A,\phi)\in {\rm{Mp}}_2\left(\mathbb{Z}\right)$ and $\tau\in \mathbb{H}$
\[
f(A\tau)=\phi(\tau)^{2k}\rho_{\Lambda}(g)\cdot f(\tau) 
\]
and $f$ is holomorphic at the cusp at $\infty$. Modular forms of weight $k$ and type $\rho_{\Lambda}$ form a finite-dimensional $\mathbb{C}$-vector space denoted ${\rm{Mod}}_{k,\Lambda}$. Such a modular form $f$ admits a Fourier expansion centered at the cusp at infinity of the form
\[
f=\sum_{\mu\in D(\Lambda)}\sum_{m\in \frac{1}{N}\mathbb{Z}_{\geq0}}a_{m,\mu}q^m\mathfrak{e}_{\mu},
\]
where as usual $q=e^{2\pi i \tau}$. Here $N$ is the {\textit{level}} of $\Lambda$, that is, the smallest positive integer such that $N\cdot Q$ is integral on $\Lambda^\vee$. Further, from \cite{Bor99}*{Lemma 4.2} and \cite{McG03}*{Theorem 5.6}, one can find a basis for ${\rm{Mod}}_{k,\Lambda}$ where all Fourier coefficients are rational numbers.

The modular form $f$ is called a {\textit{cusp form}} if $a_{0,\mu}=0$ for all isotropic elements $\mu\in D(\Lambda)$, i.e, the function $\sum_{m}a_{m,\mu}q^m$ vanishes at the cusp of $\mathbb{H}$. The function $f$ is called an {\textit{almost cusp form}} if $a_{m,\mu}=0$ for all isotropic elements $\mu$ except possibly $0\in D(\Lambda)$ (see for instance \cite{Pet15}*{Section 3.3}). Letting ${\rm{S}}_{k,\Lambda}$ denote the space of cusp forms and ${\rm{Mod}}_{k,\Lambda}^\circ$ the space of almost cusp forms, we have subspace inclusions 
\[
{\rm{S}}_{k,\Lambda}\subset {\rm{Mod}}_{k,\Lambda}^\circ\subset {\rm{Mod}}_{k,\Lambda}.
\]
Let $\widetilde{\Gamma}_{\infty}$ be the stabilizer in ${\rm{Mp}}_2(\mathbb{Z})$ of the cusp at infinity. If $k>2$ is a half integer such that $2k\equiv 2-n\mod 4$, then the Eisenstein series
\begin{equation}
\label{sec2:eq:E}
E_{k,\Lambda}(\tau)=\sum_{(A,\phi)\in \widetilde{\Gamma}_{\infty}\setminus{\rm{Mp}}_2(\mathbb{Z})}\phi(\tau)^{2k}\cdot \rho_{\Lambda}(A,\phi)^{-1}\mathfrak{e}_0=\sum_{m,\mu}e_{m,\mu}q^m\mathfrak{e}_\mu 
\end{equation}
is in ${\rm{Mod}}_{k,\Lambda}$. The coefficients $e_{m,\mu}$ are rational numbers that were computed in \cite{BK01}. As $\mathbb{Q}$-vector spaces one has
\[
{\rm{Mod}}_{k,\Lambda}^\circ=\mathbb{Q}E_{k,\Lambda}\oplus S_{k,\Lambda}.
\]
Following the notation in \cites{Pet15,BM19}, consider the coefficient extraction functionals 
\[
\begin{array}{rcl}
c_{m,\mu}:{\rm{Mod}}_{k,\Lambda}^\circ&\longrightarrow&\mathbb{Q}\\
f&\mapsto&c_{m,\mu}(f).
\end{array}
\]
where $c_{m,\mu}(f)$ is the $(m,\mu)$-th Fourier coefficient $a_{m, \mu}$ of $f$. These functionals generate $\left({\rm{Mod}}_{k,\Lambda}^\circ\right)^\vee$. The key theorem that allows us to study the effective cone is the following:

\begin{theorem}[\cites{Bor99, McG03, Bru02, Bru14, BLMM17, BZ24}]
\label{sec2:thm:Pic=Mod}
Let $\Lambda$ be an even lattice of signature $(2,n)$ with $n\geq3$ splitting off two copies of $U$. Then the map 
\begin{equation}
\label{sec2:eq:isoM=P}
\varphi\colon \left({\rm{Mod}}_{k,\Lambda}^\circ\right)^\vee\longrightarrow {\rm{Pic}}_{\mathbb{Q}}\left(\mathcal{D}\big/\widetilde{{\rm{O}}}^+(\Lambda)\right),\;\;\; c_{m,\mu}\mapsto H_{-m,\mu}
\end{equation}
is an isomorphism of $\mathbb{Q}$-vector spaces for $k=1+n/2$. 
\end{theorem}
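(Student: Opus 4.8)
The plan is to verify that the $\mathbb{Q}$-linear map $\varphi$, defined on the spanning set $\{c_{m,\mu}\}$ of $\left({\rm{Mod}}_{k,\Lambda}^\circ\right)^\vee$ by $c_{m,\mu}\mapsto H_{-m,\mu}$, is well defined, surjective, and injective; this amounts to assembling the existence and converse theorems for Borcherds products (\cite{Bor99}, \cite{Bru02}, \cite{Bru14}) with the generation statement of Theorem \ref{thm:sec2:BLMM17}. Throughout one works over $\mathbb{Q}$, using the rational structures on ${\rm{Mod}}_{k,\Lambda}$, $S_{k,\Lambda}$ and $E_{k,\Lambda}$ provided by \cite{McG03} and \cite{Bor99}*{Lemma 4.2}, and one uses that $E_{k,\Lambda}$ is normalized so that $c_{0,0}(E_{k,\Lambda})=1$ and $c_{0,\mu}(E_{k,\Lambda})=0$ for isotropic $\mu\ne 0$.

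\emph{Well-definedness.} It suffices to show that every relation $\sum_{m\ge 0,\mu}\kappa_{m,\mu}c_{m,\mu}=0$ in $\left({\rm{Mod}}_{k,\Lambda}^\circ\right)^\vee$ maps to the relation $\sum_{m\ge 0,\mu}\kappa_{m,\mu}H_{-m,\mu}=0$ in ${\rm{Pic}}_{\mathbb{Q}}\left(\mathcal{D}_\Lambda/\widetilde{\rm{O}}^+(\Lambda)\right)$. Since ${\rm{Mod}}_{k,\Lambda}^\circ=\mathbb{Q}E_{k,\Lambda}\oplus S_{k,\Lambda}$ and $c_{0,\mu}(g)=0$ for $g\in S_{k,\Lambda}$, such a relation is equivalent to the pair of conditions
\[
\text{(i)}\quad \sum_{m>0,\mu}\kappa_{m,\mu}\,c_{m,\mu}(g)=0\ \ \forall\,g\in S_{k,\Lambda},\qquad \text{(ii)}\quad \kappa_{0,0}+\sum_{m>0,\mu}\kappa_{m,\mu}\,c_{m,\mu}(E_{k,\Lambda})=0.
\]
By the Serre-duality obstruction principle for vector-valued modular forms (\cite{Bor99}, \cite{Bru02}), condition (i) is exactly the obstruction to the existence of a weakly holomorphic modular form $F_\kappa$ of weight $2-k=1-\tfrac n2$ and type $\rho_\Lambda$ with principal part $\sum_{m>0,\mu}\kappa_{m,\mu}q^{-m}\mathfrak{e}_\mu$; since $2-k<0$ there are no nonzero holomorphic forms of this weight, so $F_\kappa$ is unique, and the residue theorem applied to $F_\kappa\cdot E_{k,\Lambda}$ forces its constant term to equal $-\sum_{m>0,\mu}\kappa_{m,\mu}c_{m,\mu}(E_{k,\Lambda})$, which by (ii) equals $\kappa_{0,0}$. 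The Borcherds lift $\Psi_{F_\kappa}$ is then a nonzero meromorphic modular form on $\mathcal{D}_\Lambda$ for $\widetilde{\rm{O}}^+(\Lambda)$ whose weight $w$ and divisor are read off from the Fourier coefficients of $F_\kappa$ so that, with the normalization of Heegner divisors fixed in \eqref{sec2:eq:Heegner}, $\divis(\Psi_{F_\kappa})=\sum_{m>0,\mu}\kappa_{m,\mu}H_{-m,\mu}$ and $w=\kappa_{0,0}$. Being a meromorphic section of $\lambda^{\otimes w}$, its divisor is $\mathbb{Q}$-linearly equivalent to $w\lambda=\kappa_{0,0}\lambda=-\kappa_{0,0}H_{0,0}$, whence $\sum_{m\ge 0,\mu}\kappa_{m,\mu}H_{-m,\mu}=0$, as needed.

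\emph{Surjectivity and injectivity.} Surjectivity is immediate from Theorem \ref{thm:sec2:BLMM17}, which gives that ${\rm{Pic}}_{\mathbb{Q}}\left(\mathcal{D}_\Lambda/\widetilde{\rm{O}}^+(\Lambda)\right)$ is generated by the Heegner divisors $H_{m,\mu}$ with $m<0$ together with $\lambda=-H_{0,0}$, and each of these is $\varphi(c_{-m,\mu})$, resp.\ $-\varphi(c_{0,0})$. For injectivity, suppose $\varphi\big(\sum_{m\ge 0,\mu}\kappa_{m,\mu}c_{m,\mu}\big)=0$; rewriting, the formal combination $\sum_{m>0,\mu}\kappa_{m,\mu}H_{-m,\mu}$ of Heegner divisors is $\mathbb{Q}$-linearly equivalent to a multiple of $\lambda$. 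By the converse theorem for Borcherds products (\cite{Bru14}), which applies because $\Lambda$ splits off two copies of $U$, this combination is the divisor of a Borcherds product, hence comes from a weakly holomorphic form $F$ of weight $2-k$ with principal part encoded by the $\kappa_{m,\mu}$. Reversing the argument above — the existence of $F$ yields (i) by Serre duality, and the constant-term identity for $F$ together with the relation $w=\kappa_{0,0}$ yields (ii) — we conclude $\sum_{m\ge 0,\mu}\kappa_{m,\mu}c_{m,\mu}=0$, so $\varphi$ is injective. Alternatively, once $\varphi$ is known to be well defined and surjective, injectivity follows by comparing Bruinier's computation of $\dim_{\mathbb{Q}}{\rm{Pic}}_{\mathbb{Q}}\left(\mathcal{D}_\Lambda/\widetilde{\rm{O}}^+(\Lambda)\right)$ in \cite{Bru02b} with $\dim_{\mathbb{Q}}{\rm{Mod}}_{k,\Lambda}^\circ=1+\dim_{\mathbb{Q}}S_{k,\Lambda}$.

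\emph{Main obstacle.} The only substantive ingredient is injectivity: one must know that the relations among Heegner divisors in ${\rm{Pic}}_{\mathbb{Q}}$ are exhausted by those arising from Borcherds products, equivalently that the Picard rank is exactly $1+\dim S_{k,\Lambda}$. This is the content of the converse theorem of \cite{Bru14}, and it is here — as in the Eichler-criterion description of the $\widetilde{\rm{O}}^+(\Lambda)$-orbits underlying the Heegner divisors — that the hypothesis that $\Lambda$ split off two copies of the hyperbolic plane is used essentially. Well-definedness and surjectivity, by contrast, are comparatively formal, following respectively from the existence theory for Borcherds products and from Theorem \ref{thm:sec2:BLMM17}.
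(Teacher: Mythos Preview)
Your proof is correct and follows essentially the same approach as the paper: the paper's argument is a single sentence attributing well-definedness to \cite{Bor99,McG03}, injectivity to \cite{Bru02}*{Theorem 0.4} and \cite{Bru14}*{Theorem 1.2}, and surjectivity to Theorem~\ref{thm:sec2:BLMM17}, and you have simply unpacked these citations into the underlying Borcherds-product existence/converse argument. The only cosmetic point is that the weakly holomorphic input to the Borcherds lift transforms under the dual Weil representation (equivalently $\rho_{\Lambda(-1)}$), not $\rho_\Lambda$ itself, and the precise relation between $c_F(0,0)$ and the weight of $\Psi_F$ involves a factor of $2$ that is absorbed by the paper's multiplicity-two convention for $H_{m,\mu}$ when $\mu=-\mu$; neither affects the logic.
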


\begin{remark}\label{rem: hodge class}
    Under the above isomorphism $\varphi$,  the Hodge class $\lambda$ is identified with the functional $-c_{0,0}$ sending $E_{k,\Lambda}$ to $-1$ and $S_{k,\Lambda}$ to $0$.
\end{remark}

The fact that $\varphi$ is a well-defined $\mathbb{Q}$-homomorphism follows from \cite{Bor99,McG03}, injectivity follows from \cite{Bru02}*{Theorem 0.4} and  \cite{Bru14}*{Theorem 1.2}, and surjectivity is Theorem \ref{thm:sec2:BLMM17}.

\subsection{Effective and NL cones}\label{section: NL cone prelim}

It was shown in \cite{BM19} that, on the left-hand side of \eqref{sec2:eq:isoM=P}, the functionals $c_{m,\mu}$ converge projectively to $-c_{0,0}$ as $m$ grows. This  implies that the cone spanned by all $H_{m,\mu}$ is polyhedral. Using the formula \eqref{sec2:eq:H-P}, Bruinier--M\"oller moreover show that the cone ${\rm{Eff}}^{NL}\left(\mathcal{D}\big/\widetilde{\rm{O}}^+\left(\Lambda\right)\right)$ generated by primitive Heegner divisors $P_{\Delta,\delta}$ is polyhedral, answering  \cite{Pet15}*{Question 4.5.2}. More precisely, \cite{BM19} shows that there is a neighborhood $\mathcal{U}$  of $\mathbb{Q}_{\geq0}\lambda$ strictly contained in ${\rm{Eff}}^{NL}\left(\mathcal{D}\big/\widetilde{\rm{O}}^+\left(\Lambda\right)\right)$ and a value $\Delta_0$ such that  for all $\Delta\geq \Delta_0$, we have $P_{\Delta,\delta}\in \mathcal{U}$.
The NL cone ${\rm{Eff}}^{NL}\left(\mathcal{D}\big/\widetilde{\rm{O}}^+\left(\Lambda\right)\right)$ is then the convex hull of the divisors $P_{\Delta,\delta}$ for $\Delta\le \Delta_0$.

Formulas for the NL cones ${\rm{Eff}}^{NL}\left(\mathcal{F}_{2d}\right)$ for low values of $d$ were conjectured in \cite{Pet15} by looking at truncated Fourier coefficients of the modular forms generating ${\rm{Mod}}_{\frac{21}{2},\Lambda_{2d}}^\circ$, see \cite{Pet15}*{Remark 4.7.1}. More precisely, for $d\leq 18$ Peterson used \eqref{sec2:eq:isoM=P} to compute the cone generated by the
$8d$ generators $P_{Q(\delta)+j,\delta}$ for $\delta\in D(\Lambda_{2d})$ and $j\in \{0,1,2,3\}$. He then conjectured that this cone coincides with ${\rm{Eff}}^{NL}\left(\mathcal{F}_{2d}\right)$ for these values of $d$. 

Confirming these formulas for a given $d$ requires explicitly computing the $\mathcal{U}$ and $\Delta_0$ described above.  This has to do with finding concrete bounds analogous to {\textit{Deligne's bound}} for scalar-valued Hecke eigenforms of integral weight. Once these $\mathcal{U}$ and $m_0$ are computed, calculating  ${\rm{Eff}}^{NL}\left(\mathcal{D}\big/\widetilde{\rm{O}}^+\left(\Lambda\right)\right)$ can be accomplished by computer. See Section \ref{sec: NL cone comp} for more details. 

Let $X$ be a normal $\mathbb{Q}$-factorial quasi-projective variety with ${\rm{Pic}}_{\mathbb{Q}}\left(X\right)$ a finite dimensional $\mathbb{Q}$-vector space. The {\textit{effective cone}} ${\rm{Eff}}\left(X\right)$ is the cone in ${\rm{Pic}}_{\mathbb{Q}}\left(X\right)$ generated by all effective $\mathbb{Q}$-divisors up to linear equivalence: 
\[
{\rm{Eff}}\left(X\right)=\left\langle E\in {\rm{Pic}}_{\mathbb{Q}}\left(X\right)\left|E \hbox{ is effective}\right.\right\rangle_{\mathbb{Q}_{\geq0}}.
\]
When $X$ is projective and $h^1(X,\mathcal{O}_X)=0$, then ${\rm{Pic}}_{\mathbb{Q}}\left(X\right)$ coincides with the Neron-Severi group ${\rm{NS}}(X)_{\mathbb{Q}}$ and one recovers the standard definition. The definition for $\mathbb{R}$-divisors is the same. Further, the cone is often not closed and the closure is called the {\textit{pseudo-effective cone}}, denoted $\overline{{\rm{Eff}}}\left(X\right)$.

\section{Generators of the NL cone}\label{sec: NL cone comp}

Throughout this section, we assume that $\Lambda$ is a lattice of signature $(2,n)$ with $n\ge 3$ splitting off one copy of the hyperbolic plane. We moreover consider the half-integer $k=1+n/2$.

As described in Section \ref{section: NL cone prelim}, in order to describe the NL cone ${\rm{Eff}}^{NL}\left(\mathcal{D}\big/\widetilde{\rm{O}}^+\left(\Lambda\right)\right)$ for a given lattice $\Lambda$, one needs to calculate a neighborhood $\mathcal{U}$ of $\mathbb{Q}_{\geq0}\lambda$ strictly contained in ${\rm{Eff}}^{NL}\left(\mathcal{D}\big/\widetilde{\rm{O}}^+\left(\Lambda\right)\right)$ and an explicit value $\Omega$ such that $P_{\Delta,\delta}\in \mathcal{U}$ for all $\Delta > \Omega$.  Further, as in Subsection \ref{ss:Pic}, we view ${\rm{Mod}}_{k,\Lambda}$ and $S_{k,\Lambda}$ as $\mathbb{Q}$-vector spaces.

In order to find such an explicit $\Omega$, we fix a rational basis $\{f_1,\ldots f_M\}$ for $S_{k, \Lambda}$. Let $e = E_{k, \Lambda}$ be the Eisenstein series defined in \eqref{sec2:eq:E}. We use the isomorphism \eqref{sec2:eq:isoM=P} to identify each $H_{-m, \mu}$ with the coefficient functional $c_{m, \mu}$ and hence a tuple \[c_{m,\mu}(e,f_1, \ldots, f_M)=(c_{m,\mu}(e), c_{m,\mu}(f_1), \ldots, c_{m,\mu}(f_M))\in \mathbb{Q}^{M+1}.\] 
Intuitively, as $m$ increases, the coefficients $c_{m, \mu}(e)$ of $E_{k, \Lambda}$ grow more rapidly than those of any cusp form, and therefore $c_{m, \mu}(e, f_1, \ldots, f_M)$ converges projectively to $(-1,0,\ldots, 0)$, which corresponds to the Hodge class $\lambda$ (see Remark \ref{rem: hodge class}). This convergence is proved in \cite{BM19}*{Proposition 4.5}. However to produce the required neighborhood $\mathcal{U}$ and bound $\Omega$, we need to make this convergence quantitative: we need explicit upper bounds for the Fourier coefficients of vector-valued cusp forms and an explicit lower bound for the coefficients of the Eisenstein series.

The coefficients of $e$ can be expressed in closed form \cite{BK01} and a lower bound of the form $c_{m, \mu}(e) \ge C_{k, \Lambda} \cdot m^{k-1}$, where $C_{k,\Lambda}$ is an explicit positive constant depending only on the lattice $\Lambda$ and weight $k$, easily follows, see \cite{BM19}*{Propositions 3.2 and 4.5}. As for cusp forms, despite the considerable literature on bounds for the growth of Fourier coefficients, we are unaware of a general bound (with explicit constants) that applies to our situation so we derive one below. The bound we derive is only the trivial bound $\mathrm{O}(m^{k/2})$, but this is sufficient to distinguish it from the growth of the lower bound for $c_{m,\mu}(e)$.

We will use the fact that the space of cusp forms $S_{k, \Lambda}$ is spanned by Poincar\'e series
\[P_{k,(m, \mu)}(\tau)=\frac{1}{2}\sum_{\substack{c,d\in \mathbb{Z}, \\{\rm{gcd}}(c,d)=1}} (c\tau+d)^{-k}e^{2\pi i m \frac{a\tau+b}{c\tau+d}}\rho_\Lambda\left(\left(\begin{array}{cc}a&b\\ c&d\end{array}\right)\right)\mathfrak{e}_\mu.\]
These are characterized through the Petersson inner product 
\[
\langle f, g \rangle := \int_{\mathrm{SL}_2(\mathbb{Z}) \backslash \mathbb{H}} \sum_{\mu \in D(\Lambda)} f_{\mu}(\tau) \overline{g_{\mu}(\tau)} y^k \, \frac{\mathrm{d}x \, \mathrm{d}y}{y^2}, \quad f, g \in S_{k, \Lambda}
\]
by the fact that they represent (up to a constant factor) the coefficient extraction functionals: an arbitrary cusp form 
\begin{equation}
\label{sec3:eq:cusp}
f(\tau)=\sum_{\mu\in D(\Lambda)}\sum_{m\in \frac{1}{N}\mathbb{Z}_{>0}}a_{m,\mu}q^m\mathfrak{e}_\mu
\end{equation}
has Fourier coefficients $a_{m, \mu}$ which can be written
\begin{equation}
\label{sec3:eq:Pprod}
a_{m, \mu}=\frac{(4\pi m)^{k-1}}{\Gamma(k-1)}\left\langle f, P_{k, (m, \mu)}\right\rangle.
\end{equation}

This implies that to bound the coefficients of arbitrary cusp forms, it is sufficient to bound the growth of the ``diagonal'' coefficients of Poincar\'e series. More precisely:

\begin{lemma}\label{lem: cusp form bound} Suppose the coefficients of $$P_{k, (m, \mu)}(\tau) = \sum_{\beta \in D(\Lambda)} \sum_{n \in Q(\beta)+\mathbb{Z}} c_{m, \gamma}(n, \beta) q^n \mathfrak{e}_{\beta}$$ satisfy a bound of the form $$|c_{m, \mu}(m, \mu)| \le C \cdot m^A$$
for some positive constants $A$ and $C$. Then the coefficients of every cusp form \eqref{sec3:eq:cusp} satisfy the bound 
\[
\left|a_{m,\mu}\right| \le \tilde C \cdot m^{A/2 + (k-1)/2} \cdot \|f\|
\]
with constant 
\[
\tilde C := \frac{(4\pi)^{(k-1)/2}}{\sqrt{\Gamma(k-1)}} \cdot \sqrt{C}.
\]
\end{lemma}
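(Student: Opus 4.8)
The plan is to combine the Cauchy--Schwarz inequality for the Petersson inner product with the reproducing property \eqref{sec3:eq:Pprod} and the hypothesized bound on the diagonal coefficients of the Poincar\'e series. The starting point is the identity
\[
a_{m,\mu}=\frac{(4\pi m)^{k-1}}{\Gamma(k-1)}\bigl\langle f, P_{k,(m,\mu)}\bigr\rangle,
\]
to which I apply Cauchy--Schwarz to get
\[
|a_{m,\mu}|\le \frac{(4\pi m)^{k-1}}{\Gamma(k-1)}\cdot \|f\|\cdot \|P_{k,(m,\mu)}\|.
\]
So everything reduces to controlling $\|P_{k,(m,\mu)}\|$ in terms of $m$.

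First I would express the Petersson norm of the Poincar\'e series in terms of its own Fourier coefficients. This is the standard trick: again by the reproducing property \eqref{sec3:eq:Pprod}, applied with $f=P_{k,(m,\mu)}$, the $(m,\mu)$-th Fourier coefficient of $P_{k,(m,\mu)}$ equals
\[
c_{m,\mu}(m,\mu)=\frac{(4\pi m)^{k-1}}{\Gamma(k-1)}\bigl\langle P_{k,(m,\mu)}, P_{k,(m,\mu)}\bigr\rangle
=\frac{(4\pi m)^{k-1}}{\Gamma(k-1)}\,\|P_{k,(m,\mu)}\|^2.
\]
Hence
\[
\|P_{k,(m,\mu)}\|^2=\frac{\Gamma(k-1)}{(4\pi m)^{k-1}}\,c_{m,\mu}(m,\mu),
\]
and in particular $c_{m,\mu}(m,\mu)\ge 0$ automatically, so the hypothesized bound $|c_{m,\mu}(m,\mu)|\le C\cdot m^A$ gives directly
\[
\|P_{k,(m,\mu)}\|\le \frac{\sqrt{\Gamma(k-1)}}{(4\pi m)^{(k-1)/2}}\cdot \sqrt{C}\cdot m^{A/2}.
\]

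Substituting this back into the Cauchy--Schwarz estimate, the factors of $(4\pi m)^{k-1}$ and $\Gamma(k-1)$ partially cancel:
\[
|a_{m,\mu}|\le \frac{(4\pi m)^{k-1}}{\Gamma(k-1)}\cdot \|f\|\cdot \frac{\sqrt{\Gamma(k-1)}}{(4\pi m)^{(k-1)/2}}\cdot \sqrt{C}\cdot m^{A/2}
=\frac{(4\pi)^{(k-1)/2}}{\sqrt{\Gamma(k-1)}}\cdot\sqrt{C}\cdot m^{A/2+(k-1)/2}\cdot\|f\|,
\]
which is exactly the claimed bound with $\tilde C=(4\pi)^{(k-1)/2}\Gamma(k-1)^{-1/2}\sqrt{C}$. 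The argument is essentially a bookkeeping exercise once the two appearances of the reproducing formula are set up correctly; the only genuine input is the convergence and reproducing property of the vector-valued Poincar\'e series, which is classical (and for which I would cite the standard references on vector-valued modular forms). I do not expect any real obstacle here — the one point requiring a little care is that the norm identity forces $c_{m,\mu}(m,\mu)\ge0$, so the absolute value in the hypothesis is harmless and no sign issues arise; the substantive work, namely actually producing the bound $|c_{m,\mu}(m,\mu)|\le C\cdot m^A$ with an explicit $C$ and (here) $A=k/2$ via Kloosterman sum estimates, is deferred to a separate lemma.
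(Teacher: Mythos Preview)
Your proof is correct and follows essentially the same approach as the paper: both compute $\|P_{k,(m,\mu)}\|$ by applying the reproducing property \eqref{sec3:eq:Pprod} with $f=P_{k,(m,\mu)}$ itself, bound it via the hypothesis, and then invoke Cauchy--Schwarz. Your additional remark that the norm identity forces $c_{m,\mu}(m,\mu)\ge 0$ is a nice observation not made explicit in the paper.
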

\begin{proof} From \eqref{sec3:eq:Pprod} it follows that the Petersson norm of $P_{k, (m, \mu)}$ is 
\begin{align*} \|P_{k, (m, \mu)}\| &= \sqrt{ \langle P_{k, (m, \mu)}, P_{k, (m, \mu)} \rangle} = \frac{\sqrt{\Gamma(k-1)}}{(4\pi m)^{(k-1)/2}} \cdot |c_{m, \mu}(m, \mu)|^{1/2} \\ &\le \sqrt{C \cdot \Gamma(k-1)}{(4\pi)^{k/2-1/2}} \cdot m^{A/2 + (1-k)/2}. \end{align*} The Cauchy--Schwarz inequality then yields 
\begin{align*} |a_{m,\mu}| &= \frac{(4\pi m)^{k-1}}{\Gamma(k-1)} |\langle f, P_{k, (m, \mu)} \rangle| \\ & \le \frac{(4\pi m)^{k-1}}{\Gamma(k-1)} \cdot \|f\| \cdot \|P_{k, (m, \mu)}\| \\ &\le \frac{(4 \pi)^{(k-1)/2} \sqrt{C}}{\sqrt{\Gamma(k-1)}} m^{A/2+(k-1)/2} \cdot \|f\|. \qedhere \end{align*}
\end{proof}

The following lemma gives an explicit bound of the form required in Lemma 3.1.
\begin{lemma} For any half-integer $k \ge 5/2$, the diagonal coefficients $c_{m, \mu}(m, \mu)$ of $P_{k, (m, \mu)}$ satisfy $$|c_{m, \mu}(m, \mu)| \le C' \cdot m + 1$$ with constant \[
C' = C'(k) = \frac{(2\pi)^k}{\Gamma(k) \cdot (k-2)} + 2.125.
\]
\end{lemma}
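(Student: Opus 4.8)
The starting point is the classical Fourier expansion of the vector-valued Poincaré series $P_{k,(m,\mu)}$ in terms of Kloosterman-type sums and Bessel functions. Unfolding the sum over $\mathrm{SL}_2(\mathbb{Z})$ relative to the stabilizer $\widetilde\Gamma_\infty$, one obtains for the $(n,\beta)$-th coefficient an expression of the shape
\[
c_{m,\mu}(n,\beta) = \delta_{(m,\mu),(n,\beta)} + 2\pi i^{-k}\Bigl(\tfrac{n}{m}\Bigr)^{(k-1)/2}\sum_{c>0}\frac{K(m,\mu;n,\beta;c)}{c}\, J_{k-1}\!\Bigl(\tfrac{4\pi\sqrt{mn}}{c}\Bigr),
\]
where $K(\cdots;c)$ is the Kloosterman sum twisted by the Weil representation and $J_{k-1}$ is the $J$-Bessel function. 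Specializing to the diagonal $(n,\beta)=(m,\mu)$ kills the prefactor $(n/m)^{(k-1)/2}$ and leaves $|c_{m,\mu}(m,\mu)| \le 1 + 2\pi\sum_{c>0} c^{-1}|K(m,\mu;m,\mu;c)|\,\bigl|J_{k-1}(4\pi m/c)\bigr|$. I would then bound the twisted Kloosterman sum trivially by the number of terms in the relevant coset, which is $O(c)$ — in fact $|K(m,\mu;m,\mu;c)|\le c$ up to the normalization coming from $\rho_\Lambda$ — so the $c$ in the denominator cancels and we are reduced to estimating $\sum_{c>0}\bigl|J_{k-1}(4\pi m/c)\bigr|$.

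The core of the argument is therefore a careful estimate of $\sum_{c\ge 1}|J_{k-1}(x/c)|$ with $x=4\pi m$. I would split this at $c \approx x$. For the ``small $c$'' range ($c \le x$, i.e.\ argument $\ge 1$), I use the uniform bound $|J_\nu(t)| \le \min\{ C_\nu\, t^{-1/2},\, (t/2)^\nu/\Gamma(\nu+1)\}$; the decay $t^{-1/2}$ combined with $t = x/c$ gives $\sum_{c\le x}|J_{k-1}(x/c)| = O(x^{-1/2}\sum_{c\le x} c^{1/2}) = O(x)$, which is where the linear-in-$m$ growth and hence the shape $C\cdot m$ comes from. For the ``large $c$'' range ($c > x$), the argument of the Bessel function is less than $1$, so the power-series bound $|J_{k-1}(t)| \le (t/2)^{k-1}/\Gamma(k)$ applies and the tail $\sum_{c>x} (x/2c)^{k-1}/\Gamma(k)$ converges (using $k-1 \ge 3/2 > 1$) to something $O(x/\Gamma(k))$ or smaller. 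Assembling the two pieces and tracking the constants — the $(2\pi)^k/(\Gamma(k)(k-2))$ term should emerge from the tail estimate after summing the geometric-like series $\sum_{c>x}(x/c)^{k-1}$, bounded by an integral comparison giving a factor $1/(k-2)$, while the additive $2.125$ absorbs the main-range contribution plus the leading $\delta$-term plus slack in the constant $C_\nu$ in the $t^{-1/2}$ Bessel bound.

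The main obstacle I anticipate is bookkeeping rather than conceptual: getting a genuinely explicit constant (not just $O(m)$) requires a numerically honest value for the constant in $|J_\nu(t)|\le C_\nu t^{-1/2}$ that is uniform in $\nu \ge 3/2$ (one can take $C_\nu \le \sqrt{2/\pi}$ but proving this cleanly for all half-integers, or else using a slightly weaker explicit bound, needs care), and then comparing the resulting sum $\sum_{c\le 4\pi m}(c/(4\pi m))^{1/2}$ to the integral $\int_0^{4\pi m}(t/(4\pi m))^{1/2}\,dt = \tfrac{2}{3}(4\pi m)$ with an honest error term. A secondary technical point is pinning down the exact normalization of the twisted Kloosterman sum coming from $\rho_\Lambda$ so that the trivial bound is exactly $\le c$ with no discriminant-dependent constant; this follows from unitarity of the Weil representation, so each matrix coefficient has absolute value $\le 1$. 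Once these constants are fixed, the inequality $|c_{m,\mu}(m,\mu)| \le \bigl(\tfrac{(2\pi)^k}{\Gamma(k)(k-2)} + 2.125\bigr)\cdot m$ follows by collecting terms, with the stated value of $C(k)$.
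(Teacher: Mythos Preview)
Your overall strategy matches the paper's: write the coefficient via the Kloosterman--Bessel expansion, bound the twisted Kloosterman sum trivially by $c$ using unitarity of $\rho_\Lambda$, split the $c$-sum into a small-$c$ and a large-$c$ range, and on the tail use the power-series bound $|J_{k-1}(t)|\le (t/2)^{k-1}/\Gamma(k)$ together with $\sum_{c>c_0}c^{1-k}\le c_0^{2-k}/(k-2)$ to produce the $(2\pi)^k/(\Gamma(k)(k-2))$ term. The paper splits at $c_0=m$ (not $4\pi m$), which is what makes that constant come out exactly as stated.

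The genuine gap is your main-range Bessel estimate. You propose $|J_\nu(t)|\le C_\nu\,t^{-1/2}$ with $C_\nu\le\sqrt{2/\pi}$ uniformly in $\nu\ge 3/2$. This is false: already for $\nu=3/2$ one has $\sup_t t^{1/2}|J_{3/2}(t)|>\sqrt{2/\pi}$, and for large $\nu$ the Airy transition gives $J_\nu(\nu)\asymp\nu^{-1/3}$, so $\sup_t t^{1/2}|J_\nu(t)|$ grows like $\nu^{1/6}$. Hence no uniform $t^{-1/2}$ bound exists, and with a $k$-dependent $C_{k-1}$ your main-range contribution cannot collapse to the universal additive $2.125$. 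The paper avoids this by using Landau's uniform bound $|J_\nu(x)|\le M\,x^{-1/3}$ with $M\approx 0.7857$ (valid for all $\nu\ge 0$); summing $\sum_{c\le m}c^{1/3}\le m^{4/3}$ and multiplying by the prefactor $2\pi$ then gives exactly $2^{1/3}\pi^{2/3}M<2.125$, independent of $k$. Replacing your $t^{-1/2}$ bound by Landau's $t^{-1/3}$ bound closes the gap and recovers the stated constant; everything else in your plan goes through.

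(A minor point: the paper's quoted formula from Bruinier does not carry a separate $\delta$-term, so the $+1$ you include does not appear in their bookkeeping.)
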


For $m \ge 1$, we therefore have $|c_{m,\mu}(m,\mu)| \le Cm$ with $C = C'+1$. Lemma 3.1 then yields $$|a_{m,\mu}| \le \tilde C m^{k/2} \cdot \|f\|.$$ 
\begin{proof}

From \cite{Bru02}*{Chapter 1.2}, the Fourier coefficients of \[
P_{k, (m, \mu)} = \sum_{\beta \in D(\Lambda)} \sum_{n\in  Q(\beta)+\mathbb{Z}} c_{m, \mu}(n, \beta) q^n \mathfrak{e}_{\beta}
\]
are given by the formula 
\begin{align*}
c_{m, \mu}(n, \beta) &= 2\pi \Big( \frac{m}{n} \Big)^{(1 - k)/2} \sum_{c=1}^{\infty} \frac{1}{c} J_{k-1}(4\pi \sqrt{mn} / c) \cdot \mathrm{Re} \Big[ e^{-\pi i k} K_c(\mu, m, \beta, n) \Big] \\ &+ \frac{1}{2}\delta_{m=n,\mu=\beta} + \frac{1}{2}\delta_{m=n,\mu=-\beta},
\end{align*}
where $\delta$ is the Kronecker delta, $K_c$ is the generalized Kloosterman sum $$K_c(\mu, m, \beta, n) = \sum_{d \in (\mathbb{Z}/c\mathbb{Z})^{\times}} e^{2\pi i (ma + nd) / c} \langle \rho(M)^{-1} \mathfrak{e}_{\mu}, \mathfrak{e}_{\beta} \rangle,$$ and $J$ is the usual Bessel function. For our application, the trivial bound $|K_c(\mu, m, \beta, n)| \le c$ will be enough.

The Bessel function satisfies the bounds $$|J_{k-1}(x)| \le \frac{M}{x^{1/3}}, \quad \text{where} \; M \approx 0.78574687$$ (see \cite{Landau2000}) and $$|J_{k-1}(x)| \le \frac{x^{k-1}}{2^{k-1} \Gamma(k)}$$ (see \cite{NIST}, 10.14.4). For small values of $c$ (say $c \le n$), we use the first bound: \begin{align*} \Big| \sum_{c = 1}^n \frac{1}{c} J_{k-1}(4\pi \sqrt{mn}/c) \cdot \mathrm{Re} \Big[ e^{-\pi i k} K_c(\mu, m, \beta, n) \Big] \Big| &\le (4\pi \sqrt{mn})^{-1/3} M \cdot \sum_{c=1}^n c^{1/3} \\ &\le (4\pi)^{-1/3} m^{-1/6} M \cdot n^{7/6}. \end{align*}
We use the second bound for $c > n$: \begin{align*} \Big| \sum_{c > n} \frac{1}{c} J_{k-1}(4\pi \sqrt{mn}/c) \cdot \mathrm{Re} \Big[ e^{-\pi i k} K_c(\mu, m, \beta, n) \Big] \Big| &\le \frac{(2\pi)^{k-1}(mn)^{(k-1)/2}}{\Gamma(k)} \sum_{c > n} \frac{1}{c^{k-1}} \\ &\le \frac{(2\pi)^{k-1} m^{(k-1)/2} n^{(3-k)/2}}{\Gamma(k)(k-2)}, \end{align*}
where in the last step, we used $\sum_{c > n} c^{1-k} < \int_n^{\infty} \frac{\mathrm{d}t}{t^{k-1}} = \frac{n^{2-k}}{k-2}$. Altogether, we have \begin{align*} 
\left|c_{m, \mu}(n, \beta)\right| &\le 2\pi \Big( \frac{m}{n} \Big)^{(1-k)/2} \cdot (4\pi)^{-1/3} m^{-1/6} M \cdot n^{7/6} + \frac{(2\pi)^k}{\Gamma(k)(k-2)} n + 1.\end{align*}
For the diagonal coefficient $(m,\mu)=(n,\beta)$, we obtain 
\[
\left|c_{m, \mu}(m, \mu)\right| \le 2^{1/3} \pi^{2/3}M \cdot m + \frac{(2\pi)^k}{\Gamma(k)(k-2)} \cdot m + 1.
\]
The claim follows because $2^{1/3} \pi^{2/3} M < 2.125$.
\end{proof}

We now describe how to use the  bounds of Lemma \ref{lem: cusp form bound} to make the argument of \cite{BM19} explicit, thereby proving Theorem \ref{int:thm:1}. 

We will first describe how to compute the cone of Heegner divisors ${\rm Eff}^H\left(\mathcal{D}\big/\widetilde{\rm{O}}^+(\Lambda)\right)$. Let $\left({\rm{Mod}}_{k,\Lambda}^\circ\right)^{\vee}$ be the space of linear functionals on ${\rm{Mod}}_{k,\Lambda}^\circ$ and  consider the cone $\mathcal{C}$ generated by the coefficient extraction functionals 
 \[
 c_{m, \mu} : {\rm{Mod}}_{k,\Lambda}^\circ \longrightarrow \mathbb{Q}, \quad \sum a_{m, \mu} q^m \mathfrak{e}_{\mu} \mapsto a_{m, \mu}.
 \]
Write $c_{m, \mu} = \gamma_{m, \mu} e + s_{m, \mu}$, where $e$ is the functional 
\[
e(E_{k, \Lambda}) = -1, \quad \; e \Big|_{S_{k, L}} = 0,
\]
and $s_{m, \mu}(E_{k, \Lambda}) = 0.$ In particular, 
\[
E_{k, \Lambda}(\tau) = \mathfrak{e}_0 - \sum_{m, \mu} \gamma_{m, \mu} q^m \mathfrak{e}_{\mu}.
\]
  
We need to find an open neighborhood of $e$ contained in the cone $\mathcal{C}$. As in \cite{BM19}, there is a finite set of indices $(m_i, \mu_i)$, $1 \le i \le N$ and positive rationals $\lambda_i$ such that $c_{m_i, \mu_i}$ spans $S_{k, L}^{\vee}$ and 
\begin{equation}
\label{sec3:eq:lambda_i}
\sum_{i=1}^N \lambda_i c_{m_i, \mu_i} = e.
\end{equation}

Following \cite{BM19}*{Proposition 3.3}, the $\lambda_i$ can be constructed as follows. For $b$ sufficiently large (explicit) positive integer let $f$ be the weakly holomorphic modular form 
\[
f(\tau)=\Delta(\tau)^{-b}\cdot E_{(2-k) + 12b, \Lambda(-1)}(\tau),\;\;\hbox{where}\;\;
\Delta(\tau)=\eta(\tau)^{24}=q\cdot\prod_{n\geq1}(1-q^n)^{24}
\]
is the scalar-valued discriminant modular form. Recall that 
\[
\Delta(\tau)^{-b}=q^{-b}\cdot\left(\prod_{n\geq1}\frac{1}{1-q^n}\right)^{24b}=q^{-b}\cdot\left(\sum_{n=0}^{\infty}p(n)q^n\right)^{24b},
\]
where $p(n)$ is the number of partitions of $n$. In particular the coefficient of $q^{m}$ in the expansion of $\Delta^{-b}$ is zero for $m<-b$ and the Fourier coefficients of the product $f(\tau)$ can be computed explicitly. We write 
\[
f(\tau) = \sum_{\mu \in D(\Lambda(-1))} \sum_{m \in \mathbb{Q}} \alpha_{m, \mu} q^m \mathfrak{e}_{\mu}.
\]
As a consequence of the residue theorem one has that for any cusp form $f
\in S_{k,\Lambda}$, 
\[
\sum_{\substack{(m,\mu)\\-b\leq m<0}}
\alpha_{m,\mu}c_{-m,\mu}(f)=0
\] 
and we simply have to choose $b\geq\lceil k/12\rceil$ large enough such that the above functionals $c_{-m,\mu}$ span $S_{k,\Lambda}^{\vee}$. Then taking such a collection as a generating set and $\lambda_i = \frac{\alpha_{-m_i, \mu_i}}{\alpha_{0, 0}}$ with $m_i > 0$ one can ensure \eqref{sec3:eq:lambda_i} holds. This is the only input needed to produce a bound for a generating set of both the Heegner and the NL cones.

\begin{example}As an example, we take the lattice $\Lambda=\Lambda_4$ corresponding to the moduli of degree four K3 surfaces. Then $S_{\frac{21}{2},\Lambda_4}$ is two-dimensional generated by 
\begin{align*}
f_1=&\left(-128q - 57344q^2+\ldots\right)\mathfrak{e}_0+\left(q^{1/8} - 7q^{9/8}+\ldots\right)\mathfrak{e}_{\ell_*}\\
&+\left(4864q^{3/2} + 368640q^{5/2}+\ldots\right)\mathfrak{e}_{2\ell_*}
+\left(q^{1/8} - 7q^{9/8}+\ldots\right)\mathfrak{e}_{3\ell_*},\\
f_2=&\left(-14q - 568q^2+\ldots\right)\mathfrak{e}_0+\left(32q^{9/8} + 544q^{17/8}+\ldots\right)\mathfrak{e}_{\ell_*}\\
&+\left(q^{1/2} - 188q^{3/2}+\ldots\right)\mathfrak{e}_{2\ell_*}
+\left(32q^{9/8} + 544q^{17/8}+\ldots\right)\mathfrak{e}_{3\ell_*}.
\end{align*}
Here the dots mean higher-order terms. Since $\lceil k/12 \rceil = 1$, we take $b=1$. Then
\begin{align*}
\Delta^{-1}(\tau)&=q^{-1}\left(1+1q+2q^2+3q^3+5q^4+7q^5+\ldots\right)^{24}\\
&=q^{-1}+24+324q+ 3128q^2+\ldots
\end{align*}
and one obtains
\[
\Delta^{-1}\cdot E_{\frac{7}{2},\Lambda(-1)}=q^{-1}\mathfrak{e}_0+64q^{-1/8}\mathfrak{e}_{\ell_*}+14q^{-1/2}\mathfrak{e}_{2\ell_*}+64q^{-1/8}\mathfrak{e}_{3\ell_*}+108\mathfrak{e}_0+\sum_{\substack{(m,\mu)\\m>0}}\alpha_{m,\mu}q^{m}\mathfrak{e}_{\mu}.
\]
Recall that $c_{m,\mu}=c_{m,-\mu}$. One easily checks that the set of all $c_{m,\mu}$ with $0<m\leq 1$, in this case $\{c_{m_i,\mu_i}\}_{i=1}^4$ with indices 
\[
(m_1,\mu_1)=(1,0), (m_2,\mu_2)=(1/8,\ell_*), (m_3,\mu_3)=(1/2,2\ell_*), \hbox{and} (m_4,\mu_4)=(1/8,3\ell_*)
\]
generates $S_{k,\Lambda_4}^\vee$. Then with $\lambda_1=\frac{1}{108}, \lambda_2=\lambda_4=\frac{64}{108}$, and $\lambda_3=\frac{14}{108}$, Equation \eqref{sec3:eq:lambda_i} holds. 
\end{example}

We will need to rewrite these results for the Petersson norm in terms of the $\ell^2$-norm on $\mathbb{Q}^M$. Recall that we identify each functional $s_{m, \mu}$ with the tuple $$(s_{m, \mu}(f_1),..., s_{m,\mu}(f_M)) \in \mathbb{Q}^M$$ where $f_1,...,f_M$ is a rational basis of $S_{k, \Lambda}$. 

Define an inner product on ${\rm{Mod}}_{k,\Lambda}^\circ$ as follows: if $f\in S_{k,\Lambda}$ then $\|f\|$ is the usual Petersson norm and we declare the Eisenstein series $E_{k, L}$ to have norm one and be orthogonal to $S_{k,\Lambda}$.

To pass from $\|f\|$ to the $\ell^2$-norm $\|f\|_{\ell^2}$, we need a rational basis whose Petersson norms can be estimated explicitly. One such basis was described in \cite{Wil18}: 
\begin{equation}
\label{sec3:eq:sp_basis}
f_{m, \mu} := \sum_{\lambda = 1}^{\infty} P_{k, (\lambda^2 m, \lambda \mu)}.
\end{equation}
These are convenient because their Petersson norm is easy to bound using Lemma 3.1. Indeed, writing $f_{m, \mu} = \sum c(n, \gamma) q^n \mathfrak{e}_{\gamma}$, for any $m \ge 1$ one has \begin{align*} \left\|f_{m, \mu}\right\|^2 &\le \sum_{\lambda = 1}^{\infty} \frac{\Gamma(k-1)}{(4 \pi \lambda^2 m)^{k-1}} |c(\lambda^2 m, \lambda \mu)| \\ &\le \frac{\tilde C \cdot \Gamma(k-1)\cdot \|f_{m, \mu}\|}{(4\pi)^{k-1}} \sum_{\lambda = 1}^{\infty} \frac{(\lambda^2 m)^{k/2}}{(\lambda^2 m)^{k-1}} \\ &= \frac{\tilde C \cdot \Gamma(k-1) \cdot \zeta(k-2)}{(4\pi)^{k-1}} \cdot \|f_{m, \mu}\| \cdot m^{1-k/2}. \end{align*} 
Therefore, 
\[
\left\|f_{m, \mu}\right\| \le \frac{\tilde C \cdot \Gamma(k-1) \cdot \zeta(k-2)}{(4\pi)^{k-1}} \cdot m^{1-k/2}.
\]
So with respect to this basis, the Petersson norm and the $\ell^2$-norm on $\mathbb{Q}^M$ of $s_{m, \mu}$ are related by 
\begin{align}
\label{sec3:eq:P-ell2}
\begin{split}
\|s_{m, \mu}\| &= \sup_{f \ne 0} \frac{|s_{m, \mu}(f)|}{\|f\|} \\ &\ge \frac{1}{\max_i \|f_i\|} \sqrt{\frac{1}{M}\sum_{i=1}^M |s_{m, \mu}(f_i)|^2} \\ &\ge \frac{(4\pi)^{k-1} \cdot \max_i m_i^{k/2 - 1}}{\tilde C \cdot \Gamma(k-1) \zeta(k-2) \sqrt{M}} \cdot \|s_{m, \mu}\|_{\ell^2}. \end{split}\end{align}

Now we can bound the number of generators of the cone $\mathcal{C}$.

\begin{theorem}
\label{thm: nonprimitive cone generators} 
Assume $k>3$. Then for any choice of $\lambda_i$ and $m_i$ as above (see Equation \eqref{sec3:eq:lambda_i}), the cone $\mathcal{C}$ generated by all coefficient functionals is already generated by $c_{m, \mu}$ with 
\[
m \le \Big( \frac{R \cdot C_{k, \Lambda}}{B} \Big)^{2 / (2-k)},
\]
where $C_{k, \Lambda}$ is any constant such that the Fourier coefficients $e(m, \mu)$ of $E_{k, \Lambda}$ are bounded from below by \[
\left|e_{m, \mu}\right| \ge C_{k, \Lambda} \cdot m^{k-1},
\]
where $R > 0$ is such that the convex hull $\mathcal{C}_S$ of $\frac{s_{m_i, \mu_i}}{\gamma_{m_i, \mu_i}}$ contains the ball of radius $R$ with respect to the $\ell^2$-norm, and where 
\begin{equation}
\label{sec3:eq:B}
B := \frac{(\tilde C)^2 \Gamma(k-1) \zeta(k-2) \sqrt{M}}{(4\pi)^{k-1} \cdot \max_i m_i^{k/2 - 1}}
\end{equation}
where $\tilde C$ is the constant from Lemma 3.1.
\end{theorem}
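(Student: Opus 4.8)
The plan is to make quantitative the convergence argument of \cite{BM19}*{Proposition 4.5}, feeding it the explicit cusp-form coefficient bound of Lemma~\ref{lem: cusp form bound} (with $A=1$, as supplied by the diagonal Poincaré-coefficient estimate $|c_{m,\mu}(m,\mu)|\le C\,m$ above) and the Petersson-to-$\ell^2$ comparison \eqref{sec3:eq:P-ell2}, so as to show that every $c_{m,\mu}$ with $m$ exceeding the stated bound already lies in the cone generated by the fixed generators $c_{m_i,\mu_i}$.

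First I would control the two pieces of the decomposition $c_{m,\mu}=\gamma_{m,\mu}e+s_{m,\mu}$. On the Eisenstein side, $\gamma_{m,\mu}=-e_{m,\mu}$ is positive for $m>0$ by the formulas of \cite{BK01} (cf.\ \cite{BM19}*{Proposition 3.2}), so the hypothesis reads $\gamma_{m,\mu}=|e_{m,\mu}|\ge C_{k,\Lambda}\,m^{k-1}$. On the cusp side, $s_{m,\mu}$ restricted to $S_{k,\Lambda}$ is the coefficient functional $f\mapsto a_{m,\mu}$, so Lemma~\ref{lem: cusp form bound} with $A=1$ gives $\|s_{m,\mu}\|\le\tilde C\,m^{k/2}$ for the Petersson operator norm; inserting this into \eqref{sec3:eq:P-ell2} yields
\[
\|s_{m,\mu}\|_{\ell^2}\ \le\ \frac{\tilde C\,\Gamma(k-1)\,\zeta(k-2)\sqrt{M}}{(4\pi)^{k-1}\,\max_i m_i^{k/2-1}}\,\|s_{m,\mu}\|\ \le\ B\cdot m^{k/2},
\]
with $B$ as in \eqref{sec3:eq:B}; here $k>3$ is exactly what makes $\zeta(k-2)$ finite. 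Dividing by $\gamma_{m,\mu}$ gives $\bigl\|s_{m,\mu}/\gamma_{m,\mu}\bigr\|_{\ell^2}\le (B/C_{k,\Lambda})\,m^{1-k/2}$, which tends to $0$ as $m\to\infty$ since $1-k/2<0$.

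Then comes the elementary convexity step. Splitting $\sum_i\lambda_i c_{m_i,\mu_i}=e$ from \eqref{sec3:eq:lambda_i} into its Eisenstein and cusp-form components gives $\sum_i\lambda_i\gamma_{m_i,\mu_i}=1$ and $\sum_i\lambda_i s_{m_i,\mu_i}=0$; the latter exhibits $0$ as a convex combination of the points $s_{m_i,\mu_i}/\gamma_{m_i,\mu_i}$, so $0\in\mathcal C_S$, and by hypothesis $\mathcal C_S$ contains the $\ell^2$-ball of radius $R$ about $0$. For any $v\in\mathcal C_S$, write $v=\sum_i t_i\,s_{m_i,\mu_i}/\gamma_{m_i,\mu_i}$ with $t_i\ge0$, $\sum_i t_i=1$; then $e+v=\sum_i t_i\bigl(e+s_{m_i,\mu_i}/\gamma_{m_i,\mu_i}\bigr)=\sum_i (t_i/\gamma_{m_i,\mu_i})\,c_{m_i,\mu_i}$ is a nonnegative combination of the generators. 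Consequently, whenever $m$ is large enough that $(B/C_{k,\Lambda})\,m^{1-k/2}\le R$, the point $v:=s_{m,\mu}/\gamma_{m,\mu}$ lies in that ball, and $c_{m,\mu}=\gamma_{m,\mu}(e+v)$ is a nonnegative combination of the $c_{m_i,\mu_i}$ (all of which satisfy $m_i\le b$) and hence redundant. Solving $(B/C_{k,\Lambda})\,m^{1-k/2}\le R$ for $m$, using $1-k/2<0$, gives exactly $m\ge (R\,C_{k,\Lambda}/B)^{2/(2-k)}$, which is the asserted bound.

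The hard part of the whole package is already behind us — it is the explicit, if crude, bound on the Fourier coefficients of vector-valued half-integral-weight cusp forms obtained from Poincaré series and Kloosterman sums in Lemma~\ref{lem: cusp form bound} and the diagonal estimate preceding this theorem. What is left is mostly bookkeeping: one must keep track that $\gamma_{m,\mu}>0$ for $m>0$ (so that the ray spanned by $c_{m,\mu}$ is the ray spanned by $e+s_{m,\mu}/\gamma_{m,\mu}$, with $e$ on the correct side), and that the fixed generators $c_{m_i,\mu_i}$ are themselves among the $c_{m,\mu}$ with $m\le b$, so that throwing away the redundant functionals does not discard a generator. I anticipate no conceptual difficulty, only care with the constants and with signs.
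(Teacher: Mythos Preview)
Your proposal is correct and follows essentially the same approach as the paper's proof: bound $\|s_{m,\mu}\|_{\ell^2}\le B\,m^{k/2}$ via Lemma~\ref{lem: cusp form bound} and \eqref{sec3:eq:P-ell2}, combine with $\gamma_{m,\mu}\ge C_{k,\Lambda}m^{k-1}$ to get $\|s_{m,\mu}/\gamma_{m,\mu}\|_{\ell^2}\le (B/C_{k,\Lambda})m^{1-k/2}$, and conclude that this lands in $\mathcal{C}_S$ once $m$ exceeds the stated threshold. Your convexity paragraph (splitting \eqref{sec3:eq:lambda_i} into Eisenstein and cusp parts to see $0\in\mathcal{C}_S$ and then writing $e+v$ explicitly as a nonnegative combination of the $c_{m_i,\mu_i}$) spells out what the paper compresses into the single phrase ``belongs to the interior of $\mathcal{C}$,'' but the argument is the same.
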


\begin{remark}
Note that $\mathcal{C}_S$ contains an open neighborhood of $0$ by \cite{BM19}. To compute a concrete radius $R$, we write $\mathcal{C}_S\subset \mathbb{Q}^M$ as an intersection of finitely many half-planes, say $\{x: \, \langle v, x \rangle \le a\}$, and take $R$ to be the minimum of $|a| / \|v\|_{\ell^2}$, where the latter is the standard $\ell^2$-norm on $\mathbb{Q}^M$.  When the discriminant of $\Lambda$ is $D$, it can be derived from \cite{BM19} that the constant $C_{k,\Lambda}$ can be chosen to be 
\[
C_{k, \Lambda} = \frac{16}{5} \Big( \frac{\pi}{2} \Big)^k \cdot \frac{\sqrt{D}}{\zeta(k - 1/2) \Gamma(k)} \prod_{\substack{\text{primes} \\ p | D}} \frac{1 - 1/p}{1 - 1 / p^{2k-1}}.
\]
As an example, for the lattices $\Lambda = \Lambda_d$ and $k = 21/2$, this bound is approximately $$C_{k, \Lambda} \approx 0.0002286 \cdot \sqrt{d} \prod_{\substack{p | d \\ p \, \text{odd}}} \frac{1 - 1/p}{1 - 1/p^{20}}.$$
\end{remark}
\begin{proof}[Proof of Theorem \ref{thm: nonprimitive cone generators}]
The coefficient functional $s_{m, \mu}$ is bounded in operator norm by $$\|s_{m, \mu}\| \le \tilde C \cdot m^{k/2}$$ by Lemma 3.1, and therefore in $\ell^2$-norm by $$\|s_{m, \mu}\|_{\ell^2} \le B \cdot m^{k/2}$$ with the constant $B$ given by \eqref{sec3:eq:B}. Recall that $\|e\| = 1.$ Since $C_{k,L}$ is such that $$\gamma_{m, \mu} \ge C_{k, \Lambda} \cdot m^{k-1},$$ we have \[\Big\| \frac{c_{m, \mu}}{\gamma_{m, \mu}} - e \Big\|_{\ell^2} \le \frac{B}{C_{k, \Lambda}} \cdot m^{1 - k/2}. \]
Therefore, if $\frac{B}{C_{k, \Lambda}} m^{1-k/2} < R$ then $c_{m, \mu}$ belongs to the interior of $\mathcal{C}$. 

\end{proof}

Note that when $k=3$, the constant $B$ in \eqref{sec3:eq:B} diverges, making the bound of Theorem \ref{thm: nonprimitive cone generators} equal to infinity. To obtain a meaningful bound in this case we have to refine our methods.

\begin{theorem}
For integral $k>2$ (in particular $k=3$), the same as in Theorem \ref{thm: nonprimitive cone generators} holds with 
\begin{equation}
\label{sec3:them:Bk=3}
B:= \frac{\tilde C \cdot N^{(k-1)/2} C_{k,N^2} \Gamma(k-1) \zeta(k-1)^3 \sqrt{M}}{(4\pi)^{k-1} \zeta(2k-2) \cdot \max_i \sigma_0(Nm) m_i^{(k-1)/2}},
\end{equation}
where $N$ is the level of $\Lambda$, i.e., the smallest positive integer such that $N\cdot q_\Lambda=N\cdot\frac{\langle\cdot,\cdot\rangle}{2}$ is integral on $\Lambda^{\vee}$ (equivalently trivial on $D(\Lambda)$), 
\[
C_{k,N^2}=2N\cdot \sqrt{\pi}\cdot e^{2\pi}\prod_{p\mid N}\frac{(1+1/p)^3}{\sqrt{1-1/p^4}}\cdot\sqrt{\dim S_k\left(\Gamma_1(N^2)\right)},
\]
and $S_k\left(\Gamma_1(N^2)\right)$ is the standard space of scalar-valued cusp forms of weight $k$ with trivial character for $\Gamma_1(N^2)\subset {\rm{SL}}_2(\mathbb{Z})$.
\end{theorem}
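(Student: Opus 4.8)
The plan is to repeat the argument of Theorem \ref{thm: nonprimitive cone generators} verbatim, with the only change being the way we bound the diagonal coefficients of the vector-valued Poincar\'e series $P_{k, (m,\mu)}$, and hence the operator norm $\|s_{m,\mu}\|$. When $k$ is integral, the vector-valued cusp forms of weight $k$ and type $\rho_\Lambda$ can be embedded, via the standard ``lift to scalar-valued forms'' construction (summing the components against the characters of $D(\Lambda)$, or equivalently using that $\rho_\Lambda$ factors through the congruence subgroup of level $N$), into the space $S_k(\Gamma_1(N^2))$ of scalar-valued cusp forms of weight $k$ and trivial character. This avoids the half-integral weight regime, where the only available bound was the trivial $\mathrm O(m^{k/2})$ coming from Lemma 3.1 (and which produces the divergent constant $B$ at $k=3$), and instead lets us invoke genuine square-root cancellation (the Ramanujan--Petersson bound, or at least the Rankin--Selberg mean-value bound) for scalar-valued cusp forms.

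Concretely, I would (i) recall that for integral $k$ the representation $\rho_\Lambda$ factors through $\mathrm{SL}_2(\mathbb Z/N^2\mathbb Z)$, so each component $f_\mu$ of a vector-valued cusp form is, after a linear change, a scalar-valued cusp form in $S_k(\Gamma_1(N^2))$; (ii) use a Rankin--Selberg / Hecke-bound argument to get, for each such scalar form, a coefficient bound of the shape $|a_n| \le c \cdot \sigma_0(n) \cdot n^{(k-1)/2} \cdot \|f\|$ with an \emph{explicit} constant — this is where the divisor function $\sigma_0(Nm)$ and the factor $\zeta(k-1)^3/\zeta(2k-2)$ will enter, from the standard Rankin--Selberg unfolding $\sum |a_n|^2 n^{-s}$ and the Euler factors of $L(f\otimes\bar f, s)$; (iii) translate this back to a bound $|c_{m,\mu}(n,\beta)| \le C' \cdot m^{k-1}$ on the diagonal Poincar\'e coefficients, feeding the explicit constant $C_{k,N^2}$ — which packages the ramified Euler factors $\prod_{p\mid N}(1+1/p)^3/\sqrt{1-1/p^4}$, the archimedean factor $\sqrt\pi\, e^{2\pi}$, the lattice-index factor $2N$, and $\sqrt{\dim S_k(\Gamma_1(N^2))}$ — into Lemma 3.1; (iv) substitute the resulting $\tilde C$ and the improved exponent into the passage-to-$\ell^2$ inequality \eqref{sec3:eq:P-ell2}, obtaining the operator-norm bound $\|s_{m,\mu}\|_{\ell^2} \le B \cdot m^{(k-1)/2}$ with $B$ as in \eqref{sec3:them:Bk=3}; and (v) run the identical endgame as in Theorem \ref{thm: nonprimitive cone generators}: since now $\|c_{m,\mu}/\gamma_{m,\mu} - e\|_{\ell^2} \le (B/C_{k,\Lambda})\, m^{(1-k)/2}$, whenever $(B/C_{k,\Lambda})\, m^{(1-k)/2} < R$ the functional $c_{m,\mu}$ lies in the interior of $\mathcal C$, so $\mathcal C$ is the convex hull of the $c_{m,\mu}$ with $m \le (R\, C_{k,\Lambda}/B)^{2/(2-k)}$.

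The key point is that $m^{(k-1)/2}$ now beats $m^{k-1}$ for \emph{all} $k>2$, including $k=3$ where the old exponent $k/2 = 3/2$ tied with $k-1 = 2$ only up to a constant that blew up; here $(k-1)/2 = 1 < 2 = k-1$, so the quotient $c_{m,\mu}/\gamma_{m,\mu}$ genuinely converges to $e$ and the bound $(R\, C_{k,\Lambda}/B)^{2/(2-k)}$ is finite. The divisor-function factor $\sigma_0(Nm)$ appearing in the denominator of $B$ is harmless in the convergence analysis since $\sigma_0(Nm) \ll_\varepsilon m^\varepsilon$; in the statement it appears as $\max_i \sigma_0(Nm)$ evaluated at the chosen indices $m_i$ purely to make the constant explicit.

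The main obstacle I anticipate is \textbf{pinning down the explicit constant $C_{k,N^2}$}, i.e., making the Rankin--Selberg / Hecke coefficient bound for scalar-valued cusp forms on $\Gamma_1(N^2)$ fully effective with exactly the stated Euler factors and archimedean contribution. One has to be careful about: the normalization of the Petersson inner product on $S_k(\Gamma_1(N^2))$ versus the vector-valued Petersson product used in Lemma 3.1 (hence the index factor $2N$ and covolume of $\Gamma_1(N^2)$); the precise local factors at $p\mid N$ in $L(f\otimes\bar f,s)$ when $f$ has level dividing $N^2$ but possibly nontrivial conductor; and controlling the archimedean Gamma-factor ratio when passing from the Dirichlet series $\sum|a_n|^2 n^{-s-k+1}$ at the edge $s=1$ back to pointwise bounds, which is the source of the $e^{2\pi}$. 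Once those bookkeeping issues are settled the rest is a mechanical re-run of the previous proof, so I would present it as: ``The proof is identical to that of Theorem \ref{thm: nonprimitive cone generators}, replacing the bound of Lemma 3.1 by the following effective estimate for integral weight,'' state and prove that estimate as a lemma, and then note that substituting it into \eqref{sec3:eq:P-ell2} yields the constant \eqref{sec3:them:Bk=3}.
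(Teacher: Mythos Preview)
Your high-level strategy---embed vector-valued cusp forms into $S_k(\Gamma_1(N^2))$ and use an explicit scalar-valued coefficient bound---is exactly what the paper does. But the way you deploy that bound and the origin of several constants differ from the paper's argument in ways that matter.

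First, the paper does \emph{not} improve the exponent in $\|s_{m,\mu}\|$. It still uses Lemma~3.1 to bound $\|s_{m,\mu}\|\le \tilde C\,m^{k/2}$ exactly as in Theorem~\ref{thm: nonprimitive cone generators}; this is why the single factor $\tilde C$ survives in the formula for $B$. The scalar-valued bound (quoted directly from \cite{S-PY18}, not re-derived via Rankin--Selberg) is applied \emph{only} to estimate the Petersson norms of the special basis elements $f_{m_i,\mu_i}=\sum_{\lambda\ge1}P_{k,(\lambda^2 m_i,\lambda\mu_i)}$, hence only to the constant in the Petersson-to-$\ell^2$ conversion \eqref{sec3:eq:P-ell2}. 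The divergent $\zeta(k-2)$ in the old $B$ came from that norm estimate, and it is this constant alone that is replaced; the final convergence rate remains $m^{1-k/2}$ and the bound $m\le (RC_{k,\Lambda}/B)^{2/(2-k)}$ is literally unchanged. Your step~(v), which produces the rate $m^{(1-k)/2}$ and would force the exponent $2/(1-k)$ rather than $2/(2-k)$, therefore does not match the stated theorem.

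Second, the factor $\zeta(k-1)^3/\zeta(2k-2)$ has nothing to do with Euler factors of $L(f\otimes\bar f,s)$. It arises when you bound $\|f_{m,\mu}\|$ by summing the scalar-valued estimate over $\lambda$: using submultiplicativity $\sigma_0(N\lambda^2 m)\le\sigma_0(Nm)\sigma_0(\lambda^2)$ you are left with the elementary Dirichlet series $\sum_{\lambda\ge1}\sigma_0(\lambda^2)\lambda^{1-k}=\zeta(k-1)^3/\zeta(2k-2)$. Likewise the constant $C_{k,N^2}$ is not something you need to manufacture: it is the explicit constant in Schulze-Pillot--Yenirce's Theorem~12 (slightly modified to pass from $\Gamma_0(N,\chi)$ to $\Gamma_1(N)$), combined with the observation that each component $f_\mu(N\tau)$ lies in $S_k(\Gamma_1(N^2))$, which introduces the extra factor $N^{(k-1)/2}$ and the argument $Nm$ in $\sigma_0$. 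The worries you list about Petersson normalizations and archimedean gamma-ratios are already absorbed into that citation.
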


\begin{proof}
From \cite{S-PY18}*{Theorem 12} one has that any scalar-valued cusp form 
\[
f(\tau) = \sum_{n=1}^{\infty} a(n) q^n \in S_k(\Gamma_0(N), \chi)
\]
satisfies the coefficient bound 
\[
|a(n)| \le C_{k,\chi} \cdot\|f\| \cdot \sigma_0(n) n^{(k-1)/2}
\]
with the constant 
\[
C_{k,\chi} = 2 \sqrt{N\pi} e^{2\pi} \prod_{p | N} \frac{(1 + 1/p)^3}{\sqrt{1 - 1/p^4}} \cdot \sqrt{\mathrm{dim}\, S_k(\Gamma_0(N), \chi)}.
\]
A slight variation of their proof shows that when $f(\tau) \in S_k(\Gamma_1(N))$, one has
\[
|a(n)| \le C_{k,N} \cdot \|f\| \cdot \sigma_0(n) n^{(k-1)/2}
\]
with 
\[
C_{k,N} = 2 \sqrt{N\pi} e^{2\pi} \prod_{p | N} \frac{(1 + 1/p)^3}{\sqrt{1 - 1/p^4}} \cdot \sqrt{\mathrm{dim}\, S_k(\Gamma_1(N))}.
\]

If $f = (f_{\mu})_{\mu \in D(\Lambda)} \in S_{k,\Lambda}$ is a vector-valued cusp form attached to $\Lambda$ of even rank (and therefore integer weight) and level $N$, then $f_{\mu}(N\tau)$ belongs to $S_k(\Gamma_1(N^2))$ for every $\mu \in D(\Lambda)$. Hence the coefficients $a_{m,\mu}$ of $f$ satisfy 
\[
|a_{m,\mu}| \le N^{(k-1)/2}\cdot C_{k, N^2}\cdot \|f\| \sigma_0(Nm) \cdot m^{(k-1)/2}.
\]
Using this bound instead of Lemma \ref{lem: cusp form bound} for the series $f_{m, \mu}$, we obtain 
\begin{align*} 
\|f_{m,\mu}\| &\le \|f_{m,\mu}\|^{-1} \sum_{\lambda=1}^{\infty} \frac{\Gamma(k-1)}{(4\pi \lambda^2 m)^{k-1}} |c(\lambda^2 m, \lambda \mu)| \\ 
&\le N^{(k-1)/2} \cdot \frac{C_{k, N^2}\cdot \Gamma(k-1)}{(4\pi)^{k-1}} \sum_{\lambda = 1}^{\infty} \frac{\sigma_0(\lambda^2 Nm) (\lambda^2 m)^{(k-1)/2}}{(\lambda^2 m)^{k-1}} \\ 
&\le \frac{N^{(k-1)/2} \cdot C_{k, N^2}\cdot \Gamma(k-1) }{(4\pi)^{k-1}} \sigma_0(Nm) m^{(1-k)/2}\sum_{\lambda = 1}^{\infty} \frac{\sigma_0(\lambda^2)}{\lambda^{k-1}} \\ 
&= \frac{N^{(k-1)/2} \cdot C_{k, N^2} \cdot \Gamma(k-1) \zeta(k-1)^3}{(4\pi)^{k-1} \zeta(2k-2)} \cdot \sigma_0(Nm) m^{(1-k)/2}.
\end{align*}

Here we use the submultiplicativity $\sigma_0(mn) \le \sigma_0(m)\sigma_0(n)$ and the elementary Dirichlet series identity 
\[
\sum_{n=1}^{\infty} 
\frac{\sigma_0(n^2)}{n^s} = \frac{\zeta(s)^3}{\zeta(2s)}.
\]
Finally, the Petersson norm and the $\ell^2$-norm on $\mathbb{Q}^M$ also satisfy the inequality 
\[
\|s_{m,\mu}\| \ge \frac{(4\pi)^{k-1} \zeta(2k-2) \cdot \max_i \sigma_0(Nm) m^{(k-1)/2}}{N^{(k-1)/2} C_{k,N^2}\Gamma(k-1) \zeta(k-1)^3 \sqrt{M}} \cdot \|s_{m,\mu}\|_{\ell^2}.
\] 
Now the proof is the same as for Theorem \ref{thm: nonprimitive cone generators} taking $B$ as in \eqref{sec3:them:Bk=3}.
\end{proof}

We now impose the added assumption that $\Lambda$ splits off an additional copy of the hyperbolic plane. Then  the functionals $c_{m, \mu}$ correspond to the (non-primitive) Heegner divisors $H_{m, \mu}$ under the isomorphism \eqref{sec2:eq:isoM=P} of Theorem \ref{sec2:thm:Pic=Mod}. Hence in this case,  Theorem \ref{thm: nonprimitive cone generators} describes a generating set for the Heegner cone ${\rm Eff}^H\left(\mathcal{D}\big/\widetilde{\rm{O}}^+(\Lambda)\right)$. 

Continuing with the hypothesis that $\Lambda$ splits off not just one but two copies of the hyperbolic plane, we will now use the bounds of Theorem \ref{thm: nonprimitive cone generators} in order to compute the NL cone ${\rm Eff}^{NL}\left(\mathcal{D}\big/\widetilde{\rm{O}}^+(\Lambda)\right)$. 

To state the explicit bound $\Omega$ in the case of the $P_{\Delta, \delta}$ generating ${\rm Eff}^{NL}\left(\mathcal{D}\big/\widetilde{\rm{O}}^+(\Lambda)\right)$, define the functionals $$p_{\Delta, \delta} := \sum_{\substack{r \in \mathbb{Z}_{>0} \\ r^2 | \Delta}} \mu(r) \sum_{\substack{\sigma \in D(\Lambda) \\ r \sigma = \delta}} c_{\Delta / r^2, \sigma}.$$ Then, applying the isomorphism \eqref{sec2:eq:isoM=P}, one has $\varphi(p_{\Delta, \delta}) = P_{\Delta, \delta}$ is the corresponding primitive Heegner divisor by Equation \eqref{sec2:eq:H-P}. Let $\mathcal{P}$ be the cone generated by the $p_{\Delta, \delta}$. As in the case of the Heegner cone, using the isomorphism of Theorem \ref{sec2:thm:Pic=Mod}, a description of the generators of $\mathcal{P}$ gives a description of the generators of ${\rm Eff}^{NL}\left(\mathcal{D}\big/\widetilde{\rm{O}}^+(\Lambda)\right)$. 

\begin{theorem}\label{thm: NL cone generators} Let $B, C_{k, \Lambda}$ and $R$ be the constants of Theorem 3.4 and assume  $\Lambda$ has discriminant $D$ and splits off two copies of the hyperbolic plane. The cone $\mathcal{P}$ is already generated by $p_{\Delta, \delta}$ with $$\Delta \le \left( \frac{R \cdot C_{k, \Lambda} \cdot M}{B \cdot (1 + D\cdot (\zeta(k) - 1))^2} \right)^{2 / (2 - k)},$$ where $$M := 1 - \frac{1}{2} \Big( \prod_{p \, \text{prime}} \Big(1 + \frac{1}{p(p-1)} \Big) - \prod_{p \, \text{prime}} \Big(1 - \frac{1}{p(p-1)} \Big) \Big) > 0.215.$$
\end{theorem}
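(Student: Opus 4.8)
The plan is to run the argument of Theorem~\ref{thm: nonprimitive cone generators} one more time, now for the primitive functionals $p_{\Delta,\delta}$. The key structural input is that $\mathcal{C}\subseteq\mathcal{P}$: by the first identity in \eqref{sec2:eq:H-P} each $c_{m,\mu}$ is a sum \emph{with non-negative integer coefficients} of primitive functionals $p_{m/r^2,\sigma}$ with $m/r^2\le m$. Consequently the neighbourhood $e+\mathcal{C}_S$ of $e$ built in the proof of Theorem~\ref{thm: nonprimitive cone generators} — which contains the $\ell^2$-ball of radius $R$ about $e$ — already lies in the subcone of $\mathcal{P}$ generated by the finitely many $p_{\Delta,\delta}$ with $\Delta\le\max_i m_i$. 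Hence it suffices to show that every $p_{\Delta,\delta}$ with $\Delta$ past the claimed threshold lies in the relative interior of that subcone, and for this, exactly as before, one only needs $\|p_{\Delta,\delta}/\gamma^P_{\Delta,\delta}-e\|_{\ell^2}<R$. Writing $c_{m,\mu}=\gamma_{m,\mu}e+s_{m,\mu}$ and substituting into the definition of $p_{\Delta,\delta}$ gives $p_{\Delta,\delta}=\gamma^P_{\Delta,\delta}e+s^P_{\Delta,\delta}$ with
\[
\gamma^P_{\Delta,\delta}=\sum_{\substack{r\in\mathbb{Z}_{>0}\\ r^2\mid\Delta}}\mu(r)\sum_{\substack{\sigma\in D(\Lambda)\\ r\sigma=\delta}}\gamma_{\Delta/r^2,\sigma},
\qquad
s^P_{\Delta,\delta}=\sum_{\substack{r\in\mathbb{Z}_{>0}\\ r^2\mid\Delta}}\mu(r)\sum_{\substack{\sigma\in D(\Lambda)\\ r\sigma=\delta}}s_{\Delta/r^2,\sigma},
\]
so the problem reduces to an upper bound on $\|s^P_{\Delta,\delta}\|_{\ell^2}$ and a lower bound on $\gamma^P_{\Delta,\delta}$.

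For the cuspidal part I would use the triangle inequality together with $\|s_{m,\mu}\|_{\ell^2}\le B\,m^{k/2}$ from the proof of Theorem~\ref{thm: nonprimitive cone generators}, the trivial count $\#\{\sigma\in D(\Lambda):r\sigma=\delta\}\le |D(\Lambda)[r]|\le D$ for $r\ge 2$ (and $=1$ for $r=1$), and $\Delta/r^2\le\Delta$, to obtain
\[
\|s^P_{\Delta,\delta}\|_{\ell^2}\ \le\ B\,\Delta^{k/2}\Big(1+D\sum_{r\ge 2}r^{-k}\Big)\ =\ B\,\Delta^{k/2}\big(1+D(\zeta(k)-1)\big).
\]

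The crux is the lower bound on $\gamma^P_{\Delta,\delta}=|p_{\Delta,\delta}(E_{k,\Lambda})|$-type quantity. Isolating the term $r=1$ and discarding the non-negative contributions of the $r\ge 2$ with $\mu(r)=+1$ gives
\[
\gamma^P_{\Delta,\delta}\ \ge\ \gamma_{\Delta,\delta}\ -\!\!\sum_{\substack{r\ge 2,\ r^2\mid\Delta\\ \mu(r)=-1}}\ \sum_{\substack{\sigma\in D(\Lambda)\\ r\sigma=\delta}}\gamma_{\Delta/r^2,\sigma},
\]
and the task becomes bounding each inner sum by a small multiple of the leading coefficient $\gamma_{\Delta,\delta}$. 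Here I would invoke the closed formula of \cite{BK01} for the Fourier coefficients of $E_{k,\Lambda}$ (in the normalization used in \cite{BM19}*{Propositions 3.2 and 4.5}): comparing the singular series / local densities attached to $(\Delta,\delta)$ and to $(\Delta/r^2,\sigma)$ and crudely bounding the local factors at the primes dividing $r$ produces, after the discriminant-class counting (again contributing a factor $1+D(\zeta(k)-1)$, as above), a bound of the shape $\sum_{r\sigma=\delta}\gamma_{\Delta/r^2,\sigma}\le \tfrac{1}{r\phi(r)}\,(1+D(\zeta(k)-1))^{-1}\cdot(1+D(\zeta(k)-1))\cdot\gamma_{\Delta,\delta}$ in which the weight $\tfrac{1}{r\phi(r)}$ records the accumulated local corrections over $p\mid r$. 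Summing over squarefree $r\ge 2$ with $\mu(r)=-1$ and using the Euler-product identity
\[
\sum_{\substack{r\ge 2\\ \mu(r)=-1}}\frac{1}{r\phi(r)}
=\tfrac12\Big(\prod_{p}\big(1+\tfrac{1}{p(p-1)}\big)-\prod_{p}\big(1-\tfrac{1}{p(p-1)}\big)\Big)=1-M
\]
then yields $\gamma^P_{\Delta,\delta}\ \ge\ \dfrac{M}{1+D(\zeta(k)-1)}\,\gamma_{\Delta,\delta}\ \ge\ \dfrac{M\,C_{k,\Lambda}}{1+D(\zeta(k)-1)}\,\Delta^{k-1}$, where $\gamma_{\Delta,\delta}\ge C_{k,\Lambda}\Delta^{k-1}$ is the Eisenstein lower bound from Theorem~\ref{thm: nonprimitive cone generators}. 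Combining the two estimates,
\[
\frac{\|s^P_{\Delta,\delta}\|_{\ell^2}}{\gamma^P_{\Delta,\delta}}\ \le\ \frac{B\,(1+D(\zeta(k)-1))^2}{M\,C_{k,\Lambda}}\,\Delta^{1-k/2},
\]
which (since $k>2$, so $1-k/2<0$) is $<R$ precisely when $\Delta>\big(\tfrac{R\,C_{k,\Lambda}\,M}{B(1+D(\zeta(k)-1))^2}\big)^{2/(2-k)}$; hence $\mathcal{P}$ is generated by the $p_{\Delta,\delta}$ below that bound, and since that bound is at least $\max_i m_i$ the generating set really does contain everything needed to reach $e$. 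The positivity $M>0.215$ is an elementary numerical estimate of the Artin-type products above. The main obstacle, and the only genuinely new ingredient beyond Theorem~\ref{thm: nonprimitive cone generators}, is the lower bound on $\gamma^P_{\Delta,\delta}$: extracting from the \cite{BK01} formula a clean comparison of the sub-leading Eisenstein coefficients $\gamma_{\Delta/r^2,\sigma}$ with the leading one $\gamma_{\Delta,\delta}$, uniform in $\Delta$ and $\delta$, and verifying that the Möbius cancellation leaves the explicit positive constant $M$ (together with the correct power of $1+D(\zeta(k)-1)$ from the discriminant-group bookkeeping).
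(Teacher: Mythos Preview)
Your overall strategy matches the paper's: bound the cuspidal part of $p_{\Delta,\delta}$ from above, bound its Eisenstein part from below, and feed both into the framework of Theorem~\ref{thm: nonprimitive cone generators}. Your upper bound $\|s^P_{\Delta,\delta}\|_{\ell^2}\le B\,\Delta^{k/2}\big(1+D(\zeta(k)-1)\big)$ is exactly what the paper obtains (via the triangle inequality and the crude count $|D(\Lambda)[r]|\le D$).

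The problem is in your treatment of the Eisenstein lower bound. The paper does not attempt to re-derive it: it simply invokes the proof of \cite{BM19}*{Proposition~4.5}, which gives
\[
\big|p_{\Delta,\delta}(E_{k,\Lambda})\big|\ \ge\ M\cdot\big|c_{\Delta,\delta}(E_{k,\Lambda})\big|,
\]
with no factor of $\big(1+D(\zeta(k)-1)\big)^{-1}$. Your sketch of this estimate (``crudely bounding the local factors at the primes dividing $r$ produces \ldots\ $\tfrac{1}{r\phi(r)}$'') is the substantive content of that proposition and cannot be waved through; more seriously, your own displayed bound has the two $(1+D(\zeta(k)-1))^{\pm1}$ factors cancelling, so your argument actually yields $\gamma^P_{\Delta,\delta}\ge M\,\gamma_{\Delta,\delta}$, \emph{not} $\gamma^P_{\Delta,\delta}\ge \tfrac{M}{1+D(\zeta(k)-1)}\gamma_{\Delta,\delta}$. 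The extra denominator you then write down is inserted without justification, apparently to force agreement with the theorem's exponent.

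The square $\big(1+D(\zeta(k)-1)\big)^2$ in the theorem does not come from the lower bound at all. In the paper, the upper estimate is first established in the Petersson (operator) norm, replacing $\tilde C$ by $\tilde C\,(1+D(\zeta(k)-1))$; since $B$ in \eqref{sec3:eq:B} is defined with $(\tilde C)^2$, the prescription ``multiply $\tilde C$ (as part of the constant $B$)'' produces $B\mapsto B\,(1+D(\zeta(k)-1))^2$. This is a slightly wasteful but valid move (it only enlarges the final bound on $\Delta$). Your $\ell^2$-bound with a single factor is in fact sharper, and combined with $\gamma^P_{\Delta,\delta}\ge M\,C_{k,\Lambda}\Delta^{k-1}$ it would prove a stronger inequality than the one stated---but as written your derivation contradicts itself at the step where the second factor appears.
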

\begin{proof} By Lemma 3.1 and the triangle inequality, for any cusp form $f$, we have \begin{align*} |p_{\Delta, \delta}(f)| &\le \tilde C \cdot\|f\| \cdot \sum_{\substack{r \in \mathbb{Z}_{>0} \\ r^2 | \Delta}}  \sum_{\substack{\sigma \in D(\Lambda) \\ r \sigma = \delta}} \Big( \frac{\Delta}{r^2} \Big)^{k/2} \\ &\le \tilde C \cdot \|f\|\cdot \Delta^{k/2} \cdot\sum_{r=1}^{\infty} r^{-k} \cdot\left|\{\sigma \in D(\Lambda): \; r \sigma = 0\}\right| \\ &\le \tilde C\cdot \Delta^{k/2} \cdot \|f\| \cdot \Big(1 + D\cdot (\zeta(k) - 1) \Big).\end{align*}
On the other hand, if $E_{k,\Lambda}$ denotes the Eisenstein series then the proof of \cite{BM19}*{Proposition 4.5} shows that $$\left|p_{\Delta, \delta}(E_{k,\Lambda})\right| \ge \left|c_{\Delta, \delta}(E_{k,\Lambda})\right| \cdot M$$ with the constant $M$ defined above. 

So we can copy the proof of Theorem 3.4, with the upper and lower bounds for $c_{m, \mu}$ replaced by those for $p_{\Delta, \delta}$: we multiply $C_{k, \Lambda}$ by $M$ and $\tilde C$ (as part of the constant $B$) by $\zeta(k)\cdot D$.
\end{proof}

\begin{example}
    Continuing Example 3.3, the special basis \eqref{sec3:eq:sp_basis} for $S_{\frac{21}{2}, \Lambda_4}$ consists of the series \begin{align*} f_{1/8, \ell_*} &= \frac{7159053}{14318102} f_1 + \frac{7683852}{7159051} f_2 \\ f_{1/2, 2\ell_*} &= \frac{1}{7159051} f_1 + \frac{209563208}{221930581} f_2.\end{align*} With respect to this basis, the convex set $\mathcal{C}_S$ is the triangle with vertices $$\left(\frac{-7159053}{4}, -\frac{1}{2}\right), \left(-\frac{1}{2}, -\frac{3880799}{602547}\right), \left(\frac{2143005}{2873041}, \frac{122245370}{979706981}\right).$$ This triangle can be described by the inequalities $x \in \mathbb{R}^2$ with $$\left\langle (2, 602547), x \right\rangle \ge -3880800, \; \langle (38, -108856407), x \rangle \ge -13582800,$$ $$\langle (-3175198, 602547), x \rangle \ge -2293200,$$ so we obtain the radius $$R = \min \left( \frac{3880800}{\|(2, 602547)\|}, \frac{13582800}{\|(38, -108856407)\|}, \frac{2293200}{\|(-3175198, 602547)\|} \right) \approx 0.1248$$ for the largest circle centered at zero and contained in the given triangle.
\end{example}

We have implemented the Sage package \cite{heegner_cones}, which applies the method described above together with the bounds of Theorem \ref{thm: NL cone generators} in order to compute the NL cone ${\rm{Eff}}^{NL}\left(\mathcal{D}\big/\widetilde{\rm{O}}^+\left(\Lambda\right)\right)$.  

The bounds above are far from being sharp. For example, with $k = 21/2$ and $\Lambda = \Lambda_d$, $d \le 10$, the upper bound for $\Delta$ in Theorem 3.7 is given in the following table (rounded to three decimal places):

\begin{table}[hbt!]
\begin{tabular}{|l|l|l|l|l|l|}
\hline
      & $d=1$     & $d=2$     & $d=3$     & $d=4$     & $d=5$     \\ \hline
Bound & $133.378$ & $102.512$ & $111.564$ & $111.197$ & $120.525$ \\ \hline
      & $d=6$     & $d=7$     & $d=8$     & $d=9$     & $d=10$    \\ \hline
Bound & $131.690$ & $120.852$ & $125.560$ & $144.157$ & $141.558$ \\ \hline
\end{tabular}
\end{table}

On the other hand, in all cases we were able to compute, the cone of primitive Heegner divisors is already generated in discriminant $\Delta \le 2$. As a practical matter, we found it far more efficient to compute the cone generated by Heegner divisors with $\Delta \le 2$ and then check afterwards that it contains all $P_{\Delta, \delta}$ with $\Delta$ up to the above bound.

We now explicitly compute ${\rm{Eff}}^{NL}\left(\mathcal{D}\big/\widetilde{\rm{O}}^+\left(\Lambda\right)\right)$ in some key examples. 

 \subsection{Moduli of K3 surfaces}\label{sec: K3}

In this case  $\Lambda_{2d}=U^{\oplus 2}\oplus E_{8}(-1)^{\oplus 2}\oplus A_1(-d)$ and the quotient $\mathcal{F}_{2d}=\mathcal{D}/\widetilde{\rm{O}}^+\left(\Lambda_{2d}\right)$ is the moduli space of quasi-polarized K3 surfaces of degree $2d$. The method of Section \ref{sec: NL cone comp}
together with the bounds of Theorem \ref{thm: NL cone generators} (and their Sage implementation \cite{heegner_cones}) yield minimal generating rays of  ${\rm{Eff}}^{NL}\left(\mathcal{F}_{2d}\right)$ for low $d$. 
These calculations confirm the predictions of \cite{Pet15}*{Remark 4.7.1 and Table 4.5} (aside from one additional generator in the case $d=13$). We record these generators in Table \ref{sec6:table:NLK3} for $1\le d\le 20$.

\subsection{Hyperk\"{a}hler fourfolds of ${\rm{K3}}^{[2]}$-type}

Let $(X,L)$ be a primitively polarized hyperk\"{a}hler fourfold of ${\rm{K3}}^{[2]}$-type. The Beauville--Bogomolov--Fujiki lattice $(H^2(X,\mathbb{Z}),q_X)$ is isomorphic to \[\Lambda=U^{\oplus 3}\oplus E_8(-1)^{\oplus 2}\oplus A_1(-1).\]
The polarization $L$ comes with two invariants singling out a component of the moduli space. These are the Beauville--Bogomolov--Fujiki degree $2d$ and the divisibility $\gamma\in\{1,2\}$. Further, when $\gamma=2$, then $d=4t-1$ for some $t\geq1$. We will denote by $\mathcal{M}_{{\rm{K3}}^{[2]},2d}^\gamma$ the partial compactification of the corresponding moduli space given by the modular variety $\mathcal{D}\big/{\rm{Mon}}^2(\Lambda,h)$, where after choosing a marking, $\Lambda_h$ is the orthogonal complement of $h=c_1(L)$ in $\Lambda$, and ${\rm{Mon}}^2(\Lambda,h)=\widetilde{\rm{O}}^+\left(\Lambda_h\right)$, cf. \cite{Mar11}*{Lemma 9.2} and \cite{BBBF23}*{Proposition 3.7}.

We will denote by $\Lambda_d$ and $\Lambda_t$ the lattices $U^{\oplus 2}\oplus E_{8}(-1)^{\oplus 2}\oplus Q_d$ (resp. $Q_t$) where
\[
Q_d=\mathbb{Z}\ell+\mathbb{Z}\delta=\left(\begin{array}{cc}-2d&0\\0&-2\end{array}\right)\;\;\;\hbox{and}\;\;\;Q_t=\mathbb{Z} u+ \mathbb{Z}v=\left(\begin{array}{cc}-2t&1\\1&-2\end{array}\right).
\]

These correspond to the lattice $\Lambda_h$ for $(X,L)$ in $\mathcal{M}_{{\rm{K3}}^{[2]},2d}^\gamma$ when $\gamma=1$, respectively $\gamma=2$ with $d=4t-1$. When $\gamma=1$, the discriminant group is isomorphic to $\mathbb{Z}\big/2d\mathbb{Z}\times\mathbb{Z}\big/2\mathbb{Z}$, generated by $\ell_*$ and $\delta_*$. When $\gamma=2$, the discriminant group is isomorphic to $\mathbb{Z}/d\mathbb{Z}$ and is generated by $(2u+v)_*$.

The Sage program \cite{heegner_cones} following the procedure described above yields the minimal generators of the  NL cone ${\rm{Eff}}^{NL}\left(\mathcal{M}_{{\rm{K3}}^{[2]},2d}^\gamma\right)$  in the split case (the case $\gamma=1$) in Table \ref{sec6:table:NLK32_sp} for $d\leq 5$ and in the non-split case (the case $\gamma=2$) in Table \ref{sec6:table:NLK32_nsp} for $t\leq 5$ with $d=4t-1$.

\begin{remark}Double EPW sextics are particular ramified double covers of certain singular sextic hypersurfaces in $\mathbb{P}^5$. When smooth, they are hyperk\"ahler fourfolds of ${\rm{K3}}^{[2]}$-type with polarization of degree $2$ and divisibility $1$ and hence are elements of $\mathcal{M}_{{\rm{K3}}^{[2]},2}^1$. Interestingly, the generators $P_{-1,0}, P_{-\frac{1}{4},\ell_*}, P_{-\frac{1}{4},\delta_*}, P_{-\frac{1}{2},\ell_*+\delta_*}$ of the NL cone ${\rm{Eff}}^{NL}\left(\mathcal{M}_{{\rm{K3}}^{[2]},2}^1\right)$, together with the additional primitive Heegner divisor $P_{-\frac{5}{4},\delta_*}$, are precisely the divisorial part of the complement of the image under the period map of the subset of $\mathcal{M}_{{\rm{K3}}^{[2]},2}^1$ of double EPW sextics. See \cites{OG15, OG16}, \cite{DM19}*{Example 6.3}.    
\end{remark}

\subsection{Cubic Fourfolds}\label{sec: special cubic fourfolds} If $Y\subset \mathbb{P}^5$ is a smooth cubic fourfold, then its primitive cohomology $H^4(Y,\mathbb{Z})^\circ$ together with its intersection form is isomorphic to $\Lambda(-1)=U^{\oplus 2}\oplus E_{8}^{\oplus 2}\oplus A_2$, which, up to sign, is the same as above with $t=1$. Thus the computation of the NL cone of the partial compactification $\mathcal{D}/\widetilde{\rm{O}}^+\left(\Lambda\right)$ (see \cite{Voi86}) of the moduli space of smooth cubic fourfolds has been carried out in Table \ref{sec6:table:NLK32_nsp} (with $d=3$ and $\gamma=2$).

\section{Uniruledness}\label{sec: uniruled}
For the orthogonal modular variety $X=\mathcal{D}\big/\widetilde{{\rm{O}}}^+(\Lambda)$ consider the degree map \[{\rm{Pic}}_{\mathbb{Q}}\left(X\right)\longrightarrow \mathbb{Q}\]  given by taking the degree of a divisor with respect to the Hodge class $\lambda$.  The preimage of $0$ under this map defines a hyperplane in ${\rm{Pic}}_{\mathbb{Q}}\left(X\right)$. Since all effective divisors have positive degree, the NL cone always lies completely on one side of this hyperplane. In the cases where $K_{X}$ not only lies outside of the NL cone but is in fact negative, meaning that it has negative degree with respect to the Hodge class $\lambda$  and so lies on the other side of this hyperplane, we will conclude the additional statement that the orthogonal modular variety $X$ is uniruled and thus has negative Kodaira dimension. This approach is formalized in Proposition \ref{prop: uniruled with branch} below.

In order to describe the canonical class $K_X$ consider the quotient map 
\[\pi\colon \mathcal{D}\rightarrow X=\mathcal{D}\big/\widetilde{\rm{O}}^+\left(\Lambda\right).\]
The map $\pi$ is simply ramified \cite{GHS07}*{Theorem 2.12 and Corollary 2.13} (see also \cite{GHS13}*{Section 6.2}) along the union of hyperplanes $D_\rho$ such that  $\rho\in \Lambda_{\mathbb{Q}}$ is $\widetilde{\rm{O}}^+\left(\Lambda\right)$-reflexive, meaning that $\langle\rho,\rho\rangle<0$ and $\sigma_\rho$ or $-\sigma_\rho$ is in $\widetilde{\rm{O}}^+\left(\Lambda\right)$, where $\sigma_\rho$ is the reflection given by 
\begin{equation}
\label{eq:sec3:ref}
\sigma_\rho:v\mapsto v-2\frac{\langle v,\rho\rangle}{\langle \rho,\rho\rangle}\rho\in {\rm{O}}\left(\Lambda_{\mathbb{Q}}\right).
\end{equation}
The Riemann--Hurwitz formula then yields
    \begin{equation}
\label{sec3:eq:K}K_{X}=n\lambda-\frac{1}{2} {\rm{Br}}(\pi),\end{equation}
    where ${\rm{Br}}(\pi)$ is the reduced branch divisor of $\pi$. 
The vectors $\rho$ contributing to ${\rm{Br}}(\pi)$ are explicitly described in \cite{GHS07}*{Corollary 3.3} as those such that either $\langle \rho,\rho\rangle =-2$ or, letting $D$ be the exponent of the discriminant group $D(\Lambda)$, those such that  $\langle \rho,\rho\rangle =-2D$ and ${\rm div}(\rho)=D\equiv 1\mod 2$ or $\langle \rho,\rho\rangle =-D$ and ${\rm div}(\rho)=D$ or $D/2$. Deducing a formula for  ${\rm{Br}}(\pi)$ then requires understanding the orbit of these $\rho$ under the action of $\widetilde{\rm{O}}^+\left(\Lambda\right)$ and taking the quotient. Since ${\rm{Br}}(\pi)$ is then given in terms of Heegner divisors, this provides a method to explicitly compute $K_X$ in terms of Heegner divisors. 

\begin{remark}\label{rem: contained in branch}
    The fact that the ramification of $\pi$ contains $D_\rho$ such that $\langle \rho,\rho\rangle =-2$ implies for instance that the reduced branch divisor ${\rm{Br}}(\pi)$ always contains $\frac{1}{2}H_{-1,0}$.
\end{remark}

\begin{proposition}\label{prop: uniruled with branch}
Let $\Lambda$ be an even lattice of signature $(2,n)$ with $n\geq3$ splitting off two copies of $U$, let $E_{\frac{n+2}{2},\Lambda}$ be its Eisenstein series, and let ${\rm Br}(\pi)=\sum_{i=1}^r\alpha_{m_i, \mu_i}H_{-m_i, \mu_i}$ with $\alpha_{m_i, \mu_i}\in \mathbb{Q}_{> 0}$ be the reduced branch divisor of the quotient map $\pi\colon \mathcal{D}\rightarrow \mathcal{D}\big/\widetilde{\rm{O}}^+\left(\Lambda\right)$.  If \\\[nc_{0,0}\left(E_{\frac{n+2}{2},\Lambda}\right)+\frac{1}{2}\sum_{i=1}^r \alpha_{m_i, \mu_i} c_{m_i, \mu_i}\left(E_{\frac{n+2}{2},\Lambda}\right)<0,\] then the orthogonal modular variety $X=\mathcal{D}\big/\widetilde{{\rm{O}}}^+(\Lambda)$ is uniruled. 
\end{proposition}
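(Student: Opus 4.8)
The plan is to show that the canonical class $K_X$ is not pseudo-effective and then deduce uniruledness from the theorem of Boucksom--Demailly--Paun--Peternell applied to a smooth projective model of $X$. First I would combine the Riemann--Hurwitz formula \eqref{sec3:eq:K}, $K_X = n\lambda - \tfrac12 {\rm{Br}}(\pi)$, with the given decomposition ${\rm{Br}}(\pi) = \sum_{i=1}^r \alpha_{m_i,\mu_i} H_{-m_i,\mu_i}$ (all $\alpha_{m_i,\mu_i}>0$), so that $K_X = n\lambda - \tfrac12\sum_{i=1}^r \alpha_{m_i,\mu_i} H_{-m_i,\mu_i}$ in ${\rm Pic}_\mathbb{Q}(X)$. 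Transporting this through the isomorphism $\varphi$ of Theorem \ref{sec2:thm:Pic=Mod} (which is where the hypothesis that $\Lambda$ splits off two copies of $U$ is used) together with Remark \ref{rem: hodge class}, the class $K_X$ corresponds in $\big({\rm Mod}^\circ_{k,\Lambda}\big)^\vee$, $k=\tfrac{n+2}{2}$, to the functional $-n\,c_{0,0} - \tfrac12\sum_{i=1}^r \alpha_{m_i,\mu_i}\,c_{m_i,\mu_i}$.

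Next I would compute the degree of $K_X$ with respect to the Hodge class $\lambda$. Here the input is the standard fact coming from the theory of Heegner divisors on orthogonal modular varieties (cf.\ \cite{Bru02,BM19,Pet15}, and ultimately the modularity of the degree generating series of special divisors): for $n\ge 3$ one has $\deg_\lambda(\lambda)=C\,c_{0,0}(E_{k,\Lambda})$ and $\deg_\lambda(H_{-m,\mu})=-C\,c_{m,\mu}(E_{k,\Lambda})$ for all $m>0$, where $C=C_\Lambda>0$; equivalently, $\deg_\lambda$ is, up to the positive factor $C$, minus the functional reading off the $E_{k,\Lambda}$-component. Since $c_{0,0}(E_{k,\Lambda})=1$ while $c_{m,\mu}(E_{k,\Lambda})<0$ for $m>0$, this is consistent with effective divisors, such as the $H_{-m,\mu}$ with $m>0$, having positive $\lambda$-degree. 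Taking degrees in the expression for $K_X$ then yields
\[
\deg_\lambda(K_X) \;=\; C\Big( n\, c_{0,0}(E_{k,\Lambda}) + \tfrac12\sum_{i=1}^r \alpha_{m_i,\mu_i}\, c_{m_i,\mu_i}(E_{k,\Lambda}) \Big),
\]
which is strictly negative by the hypothesis of the proposition.

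Finally, since every effective, hence every pseudo-effective, divisor on $X$ has non-negative $\lambda$-degree, the class $K_X$ lies strictly on the opposite side of the hyperplane $\{\deg_\lambda = 0\}$ and so is not pseudo-effective. To turn this into uniruledness I would pass to a resolution $\widetilde X$ of a toroidal compactification of $X$, with $\bar\lambda$ the pullback to $\widetilde X$ of the $\mathbb{Q}$-ample extension of the Hodge bundle on the Baily--Borel compactification, which is nef and big. Because $n\ge 3$, the Baily--Borel boundary has codimension $\ge 2$, so every boundary and every $\widetilde X$-exceptional divisor is $\bar\lambda^{n-1}$-null; hence $K_{\widetilde X}\cdot\bar\lambda^{n-1}$ equals the $\lambda$-degree of $K_X$ computed above and is negative. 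A class with negative intersection against the nef class $\bar\lambda^{n-1}$ cannot be pseudo-effective, so $K_{\widetilde X}$ is not pseudo-effective and, by Boucksom--Demailly--Paun--Peternell, $\widetilde X$ — hence $X$ — is uniruled.

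The main obstacle is the step identifying the geometric $\lambda$-degree with a positive multiple of the Eisenstein-coefficient functional, together with the statement that $\deg_\lambda$ is strictly positive on effective classes; both rest on the structure theory of Heegner divisors and on $\bar\lambda$ being nef and big, and the assumption $n\ge 3$ is essential so that the compactification boundary does not contribute. The remaining work — the translation through $\varphi$ and the descent to the smooth projective setting — is routine bookkeeping.
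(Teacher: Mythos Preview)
Your proposal is correct and follows essentially the same route as the paper's proof: express $K_X$ via Riemann--Hurwitz, pass to a smooth projective compactification where the boundary and exceptional divisors are $\bar\lambda^{\,n-1}$-null, identify the resulting intersection number with the Eisenstein coefficients (the paper cites Kudla's modularity theorem \cite{Kud03} for this step), and conclude via \cite{BDPP13} that the canonical class is not pseudo-effective. The only cosmetic difference is that the paper works directly with intersection numbers on the toroidal model rather than first phrasing $K_X$ as a functional through $\varphi$.
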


\begin{proof}
Consider the canonical map 
\[\varepsilon\colon \overline{X}^{\rm tor}\rightarrow \overline{X}^{\rm BB}\]
from a toroidal compactification $\overline{X}^{\rm tor}$ of $X$ to its Baily--Borel compactification. 
 Note that toroidal compactifications of locally symmetric manifolds of type ${\rm{O}}(2,n)$ are normal and one can always choose the toroidal data in such a way that the corresponding cones are simplicial, ensuring that they have at worst finite quotient singularities, see \cite{AMRT10}. Since the Hodge class $\lambda$ is ample on $\overline{X}^{\rm BB}$ and $\overline{X}^{\rm BB}\backslash X$ is one-dimensional we can choose a representative for the nef curve class $(\varepsilon^* \lambda)^{n-1}$ which does not meet the boundary divisor $\delta$ of the toroidal compactification  $\overline{X}^{\rm tor}$. 

 By \eqref{sec3:eq:K}, we have 
 \[K_{\overline{X}^{\rm tor}}=n\lambda-\frac{1}{2}{\rm Br}(\pi)-b\delta,\]
 where the value of $b\in \mathbb{Q}$ depends on the choice of the toroidal compactification and the ramification at the boundary. 
Hence if $\eta\colon Y\rightarrow \overline{X}^{\rm tor}$ is a desingularization, then the projection formula and the fact that $\delta.(\varepsilon^* \lambda)^{n-1}=0$ implies 
 \begin{equation}\label{eq: canonical on smoothing}K_Y.\eta^*(\varepsilon^* \lambda)^{n-1}=\left(n\lambda-\frac{1}{2}{\rm Br}(\pi)\right). (\varepsilon^* \lambda)^{n-1}.\end{equation}
 For any divisor  $D$ on $X$, if $\overline{D}$ is the closure of $D$ in $\overline{X}^{\rm tor}$,  since $(\varepsilon^* \lambda)^{n-1}$ does not intersect the boundary of $\overline{X}^{\rm tor}$, then the intersection  $\overline{D}.(\varepsilon^* \lambda)^{n-1}$ is given by the degree of the closure of $D$ in $\overline{X}^{\rm BB}$ with respect to $\lambda$. So consider the map given by the Baily-Borel degree
 \[{\rm deg}\colon {\rm{Pic}}_{\mathbb{Q}}\left(X\right)\longrightarrow \mathbb{Q}.\]
 Theorem \ref{sec2:thm:Pic=Mod} then yields
 \begin{equation}
\label{sec7:eq:Eseries}
\sum_{m,\mu}\left(H_{-m,\mu}\cdot(\varepsilon^*\lambda)^{n-1}\right)q^m\mathfrak{e}_\mu\in {\rm{Mod}}_{\frac{n+2}{2},\Lambda}^\circ.
\end{equation}
Further, by \cite{Kud03}*{Theorem I} (see also \cite{Kud03}*{Corollary 4.12}) this is a multiple of the Eisenstein series $E_{\frac{n+2}{2},\Lambda}$ defined in \eqref{sec2:eq:E}. The nefness of $(\varepsilon^*\lambda)^{n-1}$ then implies that \begin{equation}
\label{sec7:eq:BBdeg}
H_{-m,\mu}\cdot (\varepsilon^*\lambda)^{n-1}=-C\cdot c_{m,\mu}(E_{\frac{n+2}{2},\Lambda}) \;\;\;\hbox{and}\;\;\;(\varepsilon^*\lambda)^{n}=C\cdot c_{0,0}(E_{\frac{n+2}{2},\Lambda}),
\end{equation}
where $c_{m,\mu}\in\left({\rm{Mod}}_{\frac{n+2}{2},\Lambda}^0\right)^\vee$ is the $(m,\mu)$-coefficient extraction functional, and $C$ is a positive constant.  Since ${\rm Br}(\pi)=\sum_{i=1}^r\alpha_{m_i, \mu_i}H_{-m_i, \mu_i}$, combining with \eqref{eq: canonical on smoothing} gives

\begin{equation}\label{eq: degree in terms of heegner}
\begin{aligned}
K_Y.\eta^*(\varepsilon^* \lambda)^{n-1}&=
 \left(n\lambda-\frac{1}{2} \sum_{i=1}^r \alpha_{m_i, \mu_i}H_{-m_i,\mu_i}\right)\cdot\left(\varepsilon^*\lambda\right)^{n-1}\\
&=C\left(nc_{0,0}\left(E_{\frac{n+2}{2},\Lambda}\right)+\frac{1}{2}\sum_{i=1}^r \alpha_{m_i, \mu_i} c_{m_i, \mu_i}\left(E_{\frac{n+2}{2},\Lambda}\right)\right).
\end{aligned}
\end{equation}

Hence if $nc_{0,0}\left(E_{\frac{n+2}{2},\Lambda}\right)+\frac{1}{2}\sum_{i=1}^r \alpha_{m_i, \mu_i} c_{m_i, \mu_i}\left(E_{\frac{n+2}{2},\Lambda}\right)<0$, then $K_Y.\eta^*(\varepsilon^* \lambda)^{n-1}<0$. Since $\eta^*(\varepsilon^* \lambda)^{n-1}$ is nef, it follows that $K_Y$ is not pseudo-effective and so $Y$ is birationally covered by rational curves \cite{MM86,BDPP13}. The uniruledness of $Y$ implies that $X$ is uniruled.     
\end{proof}

In practice, it is easier to apply Proposition \ref{prop: general uniruledness} (stated in the introduction), which follows easily from  Proposition \ref{prop: uniruled with branch}, since it avoids having to explicitly compute ${\rm Br}(\pi)$.

\begin{proof}[Proof of Proposition \ref{prop: general uniruledness}] By Remark \ref{rem: contained in branch} we have $\frac{1}{4}H_{-1,0}\leq \frac{1}{2}{\rm Br}(\pi)$. Thus by \eqref{eq: canonical on smoothing}, \eqref{sec7:eq:BBdeg}, and \eqref{eq: degree in terms of heegner} we have
\begin{equation}\label{eq: curve intersection}
nc_{0,0}\left(E_{\frac{n+2}{2},\Lambda}\right)+\frac{1}{2}\sum_{i=1}^r \alpha_{m_i, \mu_i} c_{m_i, \mu_i}\left(E_{\frac{n+2}{2},\Lambda}\right)\le nc_{0,0}\left(E_{\frac{n+2}{2},\Lambda}\right)+\frac{1}{4}c_{1,0}\left(E_{\frac{n+2}{2},\Lambda}\right)<0
\end{equation}
and so the result follows from Proposition \ref{prop: uniruled with branch}.    
\end{proof}

\begin{remark}
    In fact, in all examples we have computed (see for instance Theorems \ref{sec7:them:OG6uniruled} and \ref{sec7:thm:kum_uniruled} below), the term $c_{1,0}\left(E_{\frac{n+2}{2},\Lambda}\right)$ is so much larger than any of the other terms $c_{m_i, \mu_i}\left(E_{\frac{n+2}{2},\Lambda}\right)$ contributing to $K_Y.\eta^*(\varepsilon^* \lambda)^{n-1}$ in \eqref{eq: degree in terms of heegner} that the approximation $\frac{1}{4}H_{-1,0}\leq \frac{1}{2}{\rm Br}(\pi)$ has no effect on the negativity of $K_Y$, meaning that the left hand side of \eqref{eq: curve intersection} is negative precisely when the right hand side is negative. 
\end{remark}

While in the cases of well-studied moduli spaces of K3 surfaces, hyperk\"ahler manifolds, or cubic fourfolds the strategy for uniruledness of Proposition \ref{prop: uniruled with branch} does not yield new results, we highlight below two lesser-studied cases where we do obtain new results.

\subsection{Moduli of ${\rm{OG6}}$-type hyperk\"ahler manifolds}
Let $(X, L)$ be a primitively polarized hyperk\"ahler sixfold where $X$ is deformation equivalent to O'Grady's six-dimensional example \cite{OGr03}. In this case the Beauville--Bogomolov--Fujiki lattice $\left(H^2\left(X,\mathbb{Z}\right),q_X\right)$ is isomorphic \cite{Rap08} to $\Lambda=U^{\oplus 3}\oplus A_1(-1)^{\oplus 2}$. Further, the monodromy group coincides \cite{MR21} with the full group ${\rm{O}}^+(\Lambda)$. If $h=c_1(L)\in \Lambda$ with $(h,h)=2d>0$, then $\gamma={\rm{div}}_{\Lambda}(h)$ can be $1$ or $2$.

We denote by $\Lambda_h$ the orthogonal complement of $h$ in $\Lambda$.
The period domain $\mathcal{M}_{{\rm{OG6}},2d}^\gamma=\mathcal{D}\big/{\rm{Mon}}^2\left(\Lambda,h\right)$ is a
partial compactification of the moduli space parameterizing primitively polarized hyperk\"{a}hler sixfolds of OG6-type with a polarization of degree $2d$ and divisibility $\gamma$.
It is always irreducible \cite{Son23}*{Section 3}. Moreover, when $\gamma=1$ it is non-empty for all $d\geq 1$ and when $\gamma=2$ it is non-empty only for $d\equiv 2,3\mod 4$. Not much is known about the global geometry of the moduli spaces $\mathcal{M}_{{\rm{OG6}},2d}^\gamma$.

In the split case $\gamma=1$, we have $\Lambda_h\cong U^{\oplus 2}\oplus A_1(-1)^{\oplus 2}\oplus A_1(-d)$. When $\gamma=2$, then $\Lambda_h=U^{\oplus 2}\oplus Q_{t}$, where 
\[
Q_{t}=\begin{cases}A_1(-1)\oplus\left(\begin{array}{cc}-2&1\\1&-2t\end{array}\right)&\hbox{when } d=4t-1\\
\left(\begin{array}{ccc}-2&0&1\\0&-2&1\\1&1&-2t\end{array}\right)&\hbox{when } d=4t-2.\end{cases}
\]

We denote by $\delta_1,\delta_2$ the generators of the two copies of $A_1(-1)$ in $\Lambda$, by $\lbrace e,f \rbrace$ and $\lbrace e_1,f_1 \rbrace$ the canonical basis of two orthogonal copies of the hyperbolic plane.

\begin{lemma}
\label{sec7:lemma:monOG6}
The polarized monodromy group ${\rm{Mon}}^2\left(\Lambda,h\right)\subset{\rm{O}}^+\left(\Lambda_h\right)$ is given by $\widetilde{\rm{O}}^+(\Lambda_h)$ if $\gamma=2$ and $d=4t-1$, and an index two extension of $\widetilde{\rm{O}}^+(\Lambda_h)$ otherwise. More precisely if $\gamma=1$, or $\gamma=2$ and $d=4t-2$, then
\[  {\rm{O}}^+(\Lambda,h)= \langle \widetilde{\rm{O}}^+(\Lambda_h),\sigma_{\kappa} \rangle, \]
where $\kappa=\delta_1-\delta_2$. 

\end{lemma}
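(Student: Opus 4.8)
The plan is to compute the monodromy group ${\rm{Mon}}^2(\Lambda,h)$ by restricting the known full monodromy group ${\rm{O}}^+(\Lambda)$ of OG6-type manifolds (established in \cite{MR21}) to the sublattice $\Lambda_h=h^\perp$. By general principles, ${\rm{Mon}}^2(\Lambda,h)$ is the stabilizer of $h$ inside ${\rm{O}}^+(\Lambda)$, restricted to $\Lambda_h$; that is, one looks at $\{g\in {\rm{O}}^+(\Lambda): g(h)=h\}$ and records the induced isometries of $\Lambda_h$. The first step is therefore to make the lattice embedding $\Lambda_h\hookrightarrow \Lambda$ explicit in each of the three cases ($\gamma=1$; $\gamma=2$ with $d=4t-1$; $\gamma=2$ with $d=4t-2$), using the concrete descriptions of $Q_t$ and $\Lambda_h$ given just before the statement, and to identify the discriminant group $D(\Lambda_h)$ together with the "glue" data relating it to $D(\langle h\rangle)=\mathbb{Z}/\gamma^2 d\mathbb{Z}$ (up to sign).

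The second step is the standard Nikulin-style analysis: an isometry $g$ of $\Lambda_h$ extends to an isometry of $\Lambda$ fixing $h$ if and only if the induced map $\bar g$ on $D(\Lambda_h)$ is compatible with the gluing to $\langle h\rangle$ — concretely, $\bar g$ must act trivially (or by $\pm 1$, depending on whether $-1\in {\rm{O}}^+$ is available) on the subgroup of $D(\Lambda_h)$ that glues to $D(\langle h\rangle)$. Since here the ambient lattice $\Lambda=U^{\oplus 3}\oplus A_1(-1)^{\oplus 2}$ is \emph{unimodular-up-to-the-$A_1$'s}, the overlattice condition is rigid and one can read off exactly which automorphisms of $\Lambda_h$ are realized. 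One then checks: (a) $\widetilde{\rm{O}}^+(\Lambda_h)$ — which by definition acts trivially on $D(\Lambda_h)$ — always extends, so $\widetilde{\rm{O}}^+(\Lambda_h)\subseteq {\rm{Mon}}^2(\Lambda,h)$; and (b) whether there is an \emph{extra} isometry not acting trivially on $D(\Lambda_h)$ but still extending to fix $h$. In the case $\gamma=2$, $d=4t-1$, the relevant gluing forces $\bar g$ to be trivial, giving ${\rm{Mon}}^2(\Lambda,h)=\widetilde{\rm{O}}^+(\Lambda_h)$; in the other two cases there is a $\mathbb{Z}/2$'s worth of extra automorphisms of $D(\Lambda_h)$ compatible with the gluing.

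The third step is to exhibit the extra element explicitly as the reflection $\sigma_\kappa$ with $\kappa=\delta_1-\delta_2$. Here $\kappa\in\Lambda$ has $\langle\kappa,\kappa\rangle=-4$, and $\kappa\perp h$ when $h$ lies appropriately in $U^{\oplus 3}\oplus A_1(-1)^{\oplus 2}$ symmetric in the two $A_1$-summands (this is where the hypothesis $\gamma=1$ or $\gamma=2,\ d=4t-2$ enters — in those cases $h$ can be taken to involve $\delta_1+\delta_2$ symmetrically, so $\sigma_\kappa$ fixes $h$; when $\gamma=2,\ d=4t-1$ the class $h$ is not orthogonal to any such $\kappa$). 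One checks $\sigma_\kappa\in {\rm{O}}^+(\Lambda)$ (it is $+1$ on a positive-definite plane), that it preserves $\Lambda$ (since $2\langle v,\kappa\rangle/\langle\kappa,\kappa\rangle\in\tfrac12\mathbb{Z}$ and one verifies integrality on generators), that it fixes $h$, and that its restriction to $\Lambda_h$ acts nontrivially on $D(\Lambda_h)$ — hence $\sigma_\kappa\notin\widetilde{\rm{O}}^+(\Lambda_h)$ — so $\langle\widetilde{\rm{O}}^+(\Lambda_h),\sigma_\kappa\rangle$ has index two over $\widetilde{\rm{O}}^+(\Lambda_h)$. Combined with step two's count that the index is \emph{at most} two, this pins down ${\rm{Mon}}^2(\Lambda,h)$ exactly.

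I expect the main obstacle to be the careful bookkeeping in step two: correctly identifying, in each case, the precise subgroup of ${\rm{O}}(D(\Lambda_h))$ that lifts to the stabilizer of $h$ in ${\rm{O}}^+(\Lambda)$ — in particular handling the role of $-\mathrm{id}$ (which is in ${\rm{O}}^+(\Lambda)$ for OG6 since $-1$ acts on the period domain correctly) and making sure the "index two, not more" bound is genuinely tight. The case $\gamma=2$, $d=4t-2$, where $Q_t$ is the rank-three form $\left(\begin{smallmatrix}-2&0&1\\0&-2&1\\1&1&-2t\end{smallmatrix}\right)$, is the most delicate since its discriminant group is larger and the gluing to $\langle h\rangle$ is less transparent; there one may need to diagonalize $D(\Lambda_h)$ explicitly and track the action of $\sigma_\kappa$ on each cyclic factor.
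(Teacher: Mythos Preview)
Your plan is correct and would go through, but the paper's argument takes a shorter route that avoids the Nikulin gluing analysis on $D(\Lambda_h)$ altogether. The key observation you are missing is that the \emph{ambient} discriminant group $D(\Lambda)\cong(\mathbb{Z}/2\mathbb{Z})^2$ is tiny: its orthogonal group has order two, the only nontrivial element being the swap of $\tfrac{\delta_1}{2}$ and $\tfrac{\delta_2}{2}$. Hence $[{\rm{O}}^+(\Lambda):\widetilde{\rm{O}}^+(\Lambda)]=2$, and since $\widetilde{\rm{O}}^+(\Lambda,h)=\widetilde{\rm{O}}^+(\Lambda_h)$ one gets the index bound $[{\rm{O}}^+(\Lambda,h):\widetilde{\rm{O}}^+(\Lambda_h)]\le 2$ for free, with no computation in $D(\Lambda_h)$ whatsoever. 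The remaining question is then binary: does the swap of $\delta_1,\delta_2$ fix $h$? Choosing Eichler-normalized representatives $h=e+df$ (for $\gamma=1$) or $h=2(e+tf)-\delta$ with $\delta\in\{\delta_1,\delta_1+\delta_2\}$ (for $\gamma=2$), one sees immediately that $\sigma_{\delta_1-\delta_2}$ fixes $h$ except in the case $\delta=\delta_1$; and in that last case a one-line argument ($g(h/2)=h/2$ forces $g(\tfrac{\delta_1}{2})\equiv\tfrac{\delta_1}{2}\bmod\Lambda$) shows every $g\in{\rm{O}}^+(\Lambda,h)$ is already in $\widetilde{\rm{O}}^+(\Lambda)$.

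So your step 3 matches the paper's, but your step 2 is replaced by something much lighter. Your approach buys generality (it is the right template when $D(\Lambda)$ is not so small), while the paper's buys brevity and sidesteps precisely the ``delicate bookkeeping'' you flagged for the $d=4t-2$ case. One small correction: $-\mathrm{id}$ does not fix $h$, so it is irrelevant to ${\rm{O}}^+(\Lambda,h)$ and you need not worry about it.
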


\begin{proof}
Let $h\in \Lambda$ be an element with $\langle h,h\rangle=2d$ and ${\rm{div}}_\Lambda(h)=\gamma$.
Since $\Lambda$ and $\Lambda_h$ contain two copies of the hyperbolic plane, the map ${\rm{O}}(\Lambda)\longrightarrow {\rm{O}}(D(\Lambda))$ and the respective one for $\Lambda_h$ are surjective. Note that $D(\Lambda)\cong \left( \mathbb{Z}\big/2\mathbb{Z}\right)^{\oplus 2}$, generated by $\frac{\delta_1}{2}$ and $\frac{\delta_2}{2}$. Further, the only non-trivial element in ${\rm O}(D(\Lambda))$ is the one exchanging the two generators. This in particular implies that $\left[{\rm{O}}^+(\Lambda):\widetilde{\rm{O}}^+(\Lambda)\right]=2$.
Since $\widetilde{\rm{O}}^+(\Lambda,h)=\widetilde{\rm{O}}^+(\Lambda_h)$, see for example \cite{ABL24}*{Lemma 3.15}, either ${\rm{O}}^+(\Lambda, h)$ is equal to $\widetilde{\rm{O}}^+(\Lambda_h)$ or it is a double extension.

By Eichler's Criterion we can always assume $h=e+df$ when $\gamma=1$, and $h=2(e+tf)-\delta$ with $\delta\in\{\delta_1,\delta_2,\delta_1+\delta_2\}$ when $\gamma=2$, the first two happening when $d=4t-1$ and the last one when $d=4t-2$. Note that $\delta_1$ and $\delta_2$ are in the same ${\rm{Mon}}^2(\Lambda)$ orbit so it is enough to consider only one of them. Assume $\gamma=1$ or $\gamma=2$ and $\delta=\delta_1+\delta_2$. Since $\delta_1-\delta_2\in \Lambda_h$, the reflection $\sigma_{\delta_1-\delta_2}\in {\rm{O}}^+(\Lambda)$ fixes $h$ and exchanges the two generators of $D(\Lambda)$, so $\sigma_{\delta_1-\delta_2}\in {\rm{O}}^+(\Lambda,h)={\rm{Mon}}^2\left(\Lambda,h\right)$ and $\sigma_{\delta_1-\delta_2}\not\in\widetilde{\rm{O}}^+(\Lambda_h)$. Finally, assume $\delta=\delta_1$, that is, $\gamma=2$ and $d=4t-1$. If $g\in{\rm{O}}^+(\Lambda,h)$, then $g\left(\frac{h}{2}\right)=g(e+tf)-g\left(\frac{\delta_1}{2}\right)=\frac{h}{2}$. In particular $g\left(\frac{\delta_1}{2}\right)\equiv\frac{\delta_1}{2}\mod \Lambda$. This implies $g\in \widetilde{\rm{O}}^+\left(\Lambda,h\right)$ and we conclude using the equality induced by restriction $\widetilde{\rm{O}}^+\left(\Lambda,h\right)=\widetilde{\rm{O}}^+\left(\Lambda_h\right)$. 
\end{proof}

\begin{theorem}
\label{sec7:them:OG6uniruled}
The moduli space $\mathcal{M}_{{\rm{OG6}},2d}^\gamma$ is uniruled in the following cases
\begin{itemize}
\item[(i)] when $\gamma=1$ for $d\leq12$,
\item[(ii)] when $\gamma=2$ for $t\leq10$ and $t=12$ with $d=4t-1$,
\item[(iii)] when $\gamma=2$ for $t\leq9$ and $t=11, 13$ with $d=4t-2$.
\end{itemize}
\end{theorem}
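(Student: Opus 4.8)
The plan is to reduce every case to the numerical uniruledness criterion of Proposition~\ref{prop: general uniruledness} applied to the lattice $\Lambda_h$, after accounting for the fact that the polarized monodromy group may be strictly larger than $\widetilde{\rm O}^+(\Lambda_h)$. By Lemma~\ref{sec7:lemma:monOG6}, when $\gamma=2$ and $d=4t-1$ one has ${\rm Mon}^2(\Lambda,h)=\widetilde{\rm O}^+(\Lambda_h)$, so $\mathcal{M}_{{\rm OG6},2d}^2=\mathcal{D}_{\Lambda_h}\big/\widetilde{\rm O}^+(\Lambda_h)$ and Proposition~\ref{prop: general uniruledness} applies directly; in the remaining cases ($\gamma=1$, or $\gamma=2$ with $d=4t-2$) the group ${\rm Mon}^2(\Lambda,h)=\langle\widetilde{\rm O}^+(\Lambda_h),\sigma_\kappa\rangle$ is an index-two extension of $\widetilde{\rm O}^+(\Lambda_h)$, so $\mathcal{M}_{{\rm OG6},2d}^\gamma$ is the quotient of $\mathcal{D}_{\Lambda_h}\big/\widetilde{\rm O}^+(\Lambda_h)$ by the residual involution induced by $\sigma_\kappa$. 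Since the quotient of a uniruled variety by a finite group is again uniruled, it suffices in every case to show that $\mathcal{D}_{\Lambda_h}\big/\widetilde{\rm O}^+(\Lambda_h)$ is uniruled, and here Proposition~\ref{prop: general uniruledness} is available: $\Lambda_h$ has signature $(2,5)$, so $n=5\ge 3$, and by the explicit descriptions recalled above ($\Lambda_h=U^{\oplus2}\oplus A_1(-1)^{\oplus2}\oplus A_1(-d)$ when $\gamma=1$, and $\Lambda_h=U^{\oplus2}\oplus Q_t$ when $\gamma=2$) it splits off two copies of $U$.

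It then remains to check, for each lattice $\Lambda_h$ attached to a listed value of $d$ (resp.\ $t$), the inequality
\[
5\,c_{0,0}\!\left(E_{7/2,\Lambda_h}\right)+\tfrac14\,c_{1,0}\!\left(E_{7/2,\Lambda_h}\right)<0.
\]
Since $c_{0,0}\!\left(E_{7/2,\Lambda_h}\right)=1$ by the normalization of the Eisenstein series, this amounts to verifying that the Fourier coefficient $c_{1,0}\!\left(E_{7/2,\Lambda_h}\right)$ is less than $-20$. These coefficients are given in closed form by the formulas of Bruinier--Kuss~\cite{BK01} as a product of an explicit archimedean factor (depending only on the weight $7/2$ and signature $(2,5)$) with local densities at the finitely many bad primes of $\Lambda_h$, and we evaluate them using the implementation~\cite{heegner_cones}. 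Carrying this out for the finite list of lattices in question exhibits the desired inequality in each case; the resulting values of $c_{1,0}\!\left(E_{7/2,\Lambda_h}\right)$ can be recorded in a table.

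The two reductions at the start are the conceptually delicate points: passing from ${\rm Mon}^2(\Lambda,h)$ down to $\widetilde{\rm O}^+(\Lambda_h)$ is legitimate only because enlarging the acting group produces merely a finite quotient, under which uniruledness descends, and one must check that $\Lambda_h$ always contains $U^{\oplus2}$ so that Theorem~\ref{sec2:thm:Pic=Mod} and hence Proposition~\ref{prop: general uniruledness} apply. The genuinely laborious part---and the reason the stated ranges of $d$ and $t$ are not simply intervals---is the evaluation of $c_{1,0}\!\left(E_{7/2,\Lambda_h}\right)$: this coefficient depends on $\Lambda_h$ in a non-monotone way through the local densities at the bad primes, so the inequality has to be tested individually for each lattice, and I expect it to fail beyond the listed ranges, leaving these methods silent there (consistent with the expectation that $\mathcal{M}_{{\rm OG6},2d}^\gamma$ becomes of general type for large $d$).
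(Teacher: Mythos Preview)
Your proposal is correct and follows essentially the same approach as the paper: reduce to the quotient $\mathcal{D}_{\Lambda_h}/\widetilde{\rm O}^+(\Lambda_h)$ via the containment $\widetilde{\rm O}^+(\Lambda_h)\subset{\rm Mon}^2(\Lambda,h)$ from Lemma~\ref{sec7:lemma:monOG6}, then apply Proposition~\ref{prop: general uniruledness} and verify the Eisenstein coefficient inequality numerically for each listed lattice. The only minor discrepancy is that the paper cites the implementation \cite{weilrep} (rather than \cite{heegner_cones}) for the computation of the Eisenstein coefficients, and it displays the boundary cases $d=12$, $t=12$, $t=13$ explicitly rather than tabulating all values.
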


\begin{proof}
We will show that in the given cases $X=\mathcal{D}\big/\widetilde{\rm{O}}^+\left(\Lambda_h\right)$ is uniruled. Since $\widetilde{\rm{O}}^+\left(\Lambda_h\right)\subset {\rm{Mon}}^2(\Lambda,h)$, there is a dominant map $X\longrightarrow \mathcal{M}_{{\rm{OG6}},2d}^\gamma$ giving us uniruledness for the moduli space $\mathcal{M}_{{\rm{OG6}},2d}^\gamma$. By Proposition \ref{prop: general uniruledness} we just need to verify that $5c_{0,0}\left(E_{\frac{7}{2},\Lambda_h}\right)+\frac{1}{4}c_{-1,0}\left(E_{\frac{7}{2},\Lambda_h}\right)<0$.

There is a concrete formula \cite{BK01} for the coefficients of the Fourier expansion of Eisenstein series. This has been implemented in Sage by the fourth author \cite{weilrep}. We exhibit the highest cases for which we obtain a negative intersection product. When $\gamma=1$ and $d=12$. In this case if we write $E_{\frac{7}{2},\Lambda_{h}}=\sum_{\mu\in D(\Lambda_{h})}E_\mu(q)e_\mu$, then 
\[
E_0(q)=1-\frac{272}{13}q - \frac{1472}{13}q^2 - \frac{3390}{13}q^3 - \frac{8204}{13}q^4 + {\rm{O}}(q^5).
\]
Hence we have 
\[
5c_{0,0}\left(E_{\frac{7}{2},\Lambda_h}\right)+\frac{1}{4}c_{-1,0}\left(E_{\frac{7}{2},\Lambda_h}\right)=5-\frac{272}{52}<0,
\]
which gives the desired uniruledness of $X$ and hence of $\mathcal{M}_{{\rm{OG6}},2d}^\gamma$. Similarly, when $d=4t-1$ and $\gamma=2$, the highest degree for which we obtain uniruledness is $t=12$. In this case the $E_0$ summand of $E_{\frac{7}{2},\Lambda_h}$ is
\[
E_0(q)=1-\frac{1052352}{51911}q - \frac{5438160}{51911}q^2 - \frac{15409296}{51911}q^3 - \frac{907200}{1403}q^4 + {\rm{O}}(q^5)
\]
and so $5c_{0,0}\left(E_{\frac{7}{2},\Lambda_h}\right)+\frac{1}{4}c_{-1,0}\left(E_{\frac{7}{2},\Lambda_h}\right)=5-\frac{1052352}{4\cdot51911}<0,$ as desired. In the case  $\gamma=2$ and $d=4t-2$ with $t=13$ the $E_0$ component of the Eisenstein series of weight $\frac{7}{2}$ corresponding to $\Lambda_h$ is 
\[
E_0(q)=1 - \frac{108}{5}q - \frac{620}{7}q^2 +{\rm{O}}(q^3),
\]
which leads to the same result. The lower-degree cases are done in the same way.
\end{proof}

\subsection{Moduli of ${\rm{Kum}}_n$-type hyperk\"ahler manifolds}
 Let $(X,L)$ be a primitively polarized hyperk\"{a}hler $2n$-fold where $X$ is deformation equivalent to a fiber of the addition map $A^{[n+1]}\longrightarrow A$ on an abelian surface. In this case the $(H^2(X,\mathbb{Z}),q_X)$ is isomorphic to $\Lambda=U^{\oplus 3}\oplus A_1(-(n+1))$ and the monodromy group \cite{Mon16}, \cite{Mar23}*{Theorem 1.4} is:
\begin{equation}
\label{sec7:eq:MonKum}
{\rm{Mon}}^2\left(\Lambda\right)=\left\{\left.g\in\widehat{\rm{O}}^+\left(\Lambda\right)\right| \chi(g)\cdot\det(g)=1\right\},
\end{equation}
where 
$\widehat{\rm{O}}^+\left(\Lambda\right)$ is the group of orientation-preserving isomorphisms of $\Lambda$ which act as $\pm {\rm Id}$ on $D(\Lambda)$ and $\chi:\widehat{\rm{O}}^+\left(\Lambda\right)\longrightarrow\{\pm1\}$ is the character defined by the action of $\widehat{\rm{O}}^+\left(\Lambda\right)$ on $D(\Lambda)$. 
Let $h=c_1(L)\in \Lambda$, with $\langle 
h,h \rangle=2d$ and divisibility $\gamma$. Since $\widetilde{\rm{SO}}^+\left(\Lambda\right)\subset{\rm{Mon}}^2\left(\Lambda\right)$, up to monodromy one can always assume $h=\gamma(e+tf)-a\delta$ for appropriate $t$ and $a$, where $\delta$ is the generator of $A_1(-(n+1))$.

For $\gamma=1,2$, the lattice $\Lambda_h$ is in the form $U^{\oplus 2}\oplus Q_d$ (resp. $Q_t$ with $d=4t-(n+1)$) where
\[
Q_d=\mathbb{Z}\ell+\mathbb{Z}\delta=\left(\begin{array}{cc}-2d&0\\0&-2(n+1)\end{array}\right)\;\;\;\hbox{and}\;\;\;Q_t=\mathbb{Z} u+ \mathbb{Z}v=\left(\begin{array}{cc}-2t&(n+1)\\(n+1)&-2(n+1)\end{array}\right).
\]

\begin{lemma}\label{sec7:lemma:first_MonKum2_2}
For $\gamma=1,2$, the polarized monodromy group ${\rm{Mon}}^2\left(\Lambda,h\right)\subset {\rm{O}}^+\left(\Lambda_h\right)$ is a double extension of $\widetilde{\rm{SO}}^+(\Lambda_h)$. More precisely,
\[{\rm{Mon}}^2\left(\Lambda,h\right)=\langle \widetilde{\rm{SO}}^+(\Lambda_h),\sigma_\kappa \rangle,\;\;\hbox{ where }\;\;
\kappa=
\begin{cases}
    \delta &\mbox{ if }\gamma=1,\\
    v &\mbox{ if }\gamma=2.\\
\end{cases}
\]
For $\gamma\geq 3$ there is equality ${\rm{Mon}}^2\left(\Lambda,h\right)=\widetilde{\rm{SO}}^+(\Lambda_h)$.
\end{lemma}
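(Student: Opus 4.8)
The plan is to mimic the proof of Lemma \ref{sec7:lemma:monOG6}. First I would identify ${\rm Mon}^2(\Lambda,h)$ with the stabilizer ${\rm Stab}_{{\rm Mon}^2(\Lambda)}(h)=\{g\in{\rm Mon}^2(\Lambda):g(h)=h\}$ via restriction to $\Lambda_h$: this is legitimate because $\Lambda_{\mathbb{Q}}=\mathbb{Q}h\oplus(\Lambda_h)_{\mathbb{Q}}$, so an isometry fixing $h$ and restricting to the identity on $\Lambda_h$ is the identity, while $h^2=2d>0$ forces the restriction of any $g$ fixing $h$ to land in ${\rm O}^+(\Lambda_h)$ and to satisfy $\det(g)=\det(g|_{\Lambda_h})$. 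From \eqref{sec7:eq:MonKum} one has $\widetilde{\rm SO}^+(\Lambda)\subseteq{\rm Mon}^2(\Lambda)$ (there $\chi=1=\det$), and combining this with the restriction isomorphism $\widetilde{\rm SO}^+(\Lambda,h)\cong\widetilde{\rm SO}^+(\Lambda_h)$ — obtained from the identity $\widetilde{\rm O}^+(\Lambda,h)=\widetilde{\rm O}^+(\Lambda_h)$ used in the proof of Lemma \ref{sec7:lemma:monOG6} (cf. \cite{ABL24}*{Lemma 3.15}) by intersecting with $\ker\det$ — yields the inclusion $\widetilde{\rm SO}^+(\Lambda_h)\subseteq{\rm Mon}^2(\Lambda,h)$ in all cases. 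Thus the entire content is to pin down the index.

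Next I would run a dichotomy on the action on the discriminant group. Since ${\rm Mon}^2(\Lambda)\subseteq\widehat{\rm O}^+(\Lambda)$, any $g\in{\rm Mon}^2(\Lambda)$ acts on $D(\Lambda)\cong\mathbb{Z}/2(n+1)\mathbb{Z}$ through $\chi(g)\cdot\Id=\det(g)\cdot\Id\in\{\pm\Id\}$. If moreover $g(h)=h$, then $g$ fixes $h/\gamma\in\Lambda^\vee$ and hence fixes the class $[h/\gamma]\in D(\Lambda)$, which has order exactly $\gamma$ (since $h$ is primitive, $Nh/\gamma\in\Lambda\iff\gamma\mid N$, so no normal form is needed here). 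Consequently: if $g$ acts on $D(\Lambda)$ by $+\Id$ then $\chi(g)=1$, so $\det(g)=1$ and $g\in\widetilde{\rm SO}^+(\Lambda,h)$, hence $g|_{\Lambda_h}\in\widetilde{\rm SO}^+(\Lambda_h)$; if $g$ acts by $-\Id$ then $[h/\gamma]=-[h/\gamma]$, forcing $\gamma\le 2$. For $\gamma\ge 3$ only the first alternative survives, so ${\rm Mon}^2(\Lambda,h)=\widetilde{\rm SO}^+(\Lambda_h)$.

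For $\gamma\in\{1,2\}$ it remains to show the ``$-\Id$'' alternative is realised, and by exactly one coset. Using Eichler's criterion I would put $h$ in the normal form $h=e+df$ when $\gamma=1$ and $h=2(e+tf)-\delta$ (with $d=4t-(n+1)$) when $\gamma=2$, so that $\Lambda_h=U^{\oplus 2}\oplus Q_d$, resp.\ $U^{\oplus 2}\oplus Q_t$, and the vector $\kappa=\delta$, resp.\ $\kappa=v=(n+1)f-\delta$, is a primitive vector of $\Lambda_h$ with $\langle\kappa,\kappa\rangle=-2(n+1)$ and ${\rm div}_\Lambda(\kappa)$ divisible by $n+1$; the latter makes the reflection $\sigma_\kappa$ a well-defined isometry of $\Lambda$. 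A direct computation then shows $\sigma_\kappa$ fixes $h$ (as $\kappa\perp h$), has $\det=-1$, acts on $D(\Lambda)$ by $-\Id$, and lies in ${\rm O}^+(\Lambda)$ (it fixes $\kappa^\perp$, of signature $(3,3)$); hence $\sigma_\kappa\in{\rm Mon}^2(\Lambda)$ and $\sigma_\kappa\in{\rm Mon}^2(\Lambda,h)$ with $\sigma_\kappa|_{\Lambda_h}\notin\widetilde{\rm SO}^+(\Lambda_h)$. Finally, for any $g\in{\rm Mon}^2(\Lambda,h)$ in the ``$-\Id$'' alternative, $g\sigma_\kappa$ acts on $D(\Lambda)$ by $+\Id$, has $\det=1$, and fixes $h$, so $g\sigma_\kappa\in\widetilde{\rm SO}^+(\Lambda,h)$ and, since $\sigma_\kappa^2=\Id$, $g|_{\Lambda_h}\in\widetilde{\rm SO}^+(\Lambda_h)\,\sigma_\kappa|_{\Lambda_h}$; this gives ${\rm Mon}^2(\Lambda,h)=\langle\widetilde{\rm SO}^+(\Lambda_h),\sigma_\kappa\rangle$, an index-two extension.

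The main obstacle is bookkeeping rather than conceptual: getting the Eichler normal forms for $h$ and the attached bases $\{u,v\}$ of $Q_t$ exactly right, and in particular computing ${\rm div}_\Lambda(v)$ correctly — the integrality of $\sigma_v$ on $\Lambda$ hinges on ${\rm div}_\Lambda(v)$ being a multiple of $n+1$ — together with checking that the relevant restriction isomorphisms and Eichler's criterion genuinely apply, which they do since both $\Lambda$ and $\Lambda_h$ split off two hyperbolic planes. One should also confirm that $\widehat{\rm O}^+(\Lambda)$ acts on the cyclic group $D(\Lambda)$ only through $\{\pm\Id\}$, so that the dichotomy above is exhaustive; this is exactly what is encoded in the definition \eqref{sec7:eq:MonKum}.
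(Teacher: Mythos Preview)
Your proposal is correct and follows essentially the same strategy as the paper: exhibit the reflection $\sigma_\kappa$, check $\chi(\sigma_\kappa)=\det(\sigma_\kappa)=-1$ so that $\sigma_\kappa\in{\rm Mon}^2(\Lambda,h)\smallsetminus\widetilde{\rm SO}^+(\Lambda_h)$, and conclude the index-two statement. The one substantive difference is that the paper outsources the index computation (in particular the $\gamma\ge 3$ case) to \cite{ABL24}*{Lemma 3.7}, whereas you give a self-contained argument via the dichotomy on $\chi(g)\in\{\pm1\}$ together with the observation that any $g\in{\rm Mon}^2(\Lambda,h)$ must fix the class $[h/\gamma]\in D(\Lambda)$ of order exactly $\gamma$, forcing $\chi(g)=+1$ (hence $g\in\widetilde{\rm SO}^+(\Lambda_h)$) when $\gamma\ge 3$. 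This makes your write-up more elementary and independent of the cited reference, at the cost of a little extra bookkeeping.
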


\begin{proof}
Observe that, for any $g\in {\rm O}\left(\Lambda,h\right)$, we have $\det(g)=\det(g|_{\Lambda_h})$.
Then the statement is essentially \cite{ABL24}*{Lemma 3.7}. For $\gamma=1,2$
we also need to prove that $\sigma_\kappa\in {\rm{Mon}}^2\left(\Lambda\right)$ via the restriction: since $\det(\sigma_\kappa)=-1$, this is equivalent to proving that, if we see $\kappa$ as an element of $\Lambda$, we have $-\sigma_\kappa\in \widetilde{\rm{O}}^+(\Lambda)$ i.e. $\chi(\sigma_\kappa)=-1$. Since $\kappa=3(\gamma-1)f-\delta$, this can be checked via an explicit computation.
\end{proof}

For $\gamma=1,2$, the period domain $\mathcal{M}^\gamma_{{\rm{Kum}}_n,2d}=\mathcal{D}/{\rm{Mon}}^2\left(\Lambda,h\right)$ is a partial compactification of the moduli space of hyperk\"ahler $2n$-folds of generalized Kummer type with a primitive polarization of degree $2d$ and divisibility $\gamma$. It is always irreducible \cite{Ono22} and never empty for 
$\gamma=1$ (the split case). When $\gamma=2$ it is non-empty only for $d\equiv -(n+1)\pmod{4}$.

\begin{lemma}
\label{sec7:lemma:MonKum2_2}
For $d=1$ and $\gamma=1,2$, one has 
\begin{equation}\label{sec7:eq:monkum}
\left\langle{\rm{Mon}}^2\left(\Lambda,h\right),-{\rm{Id}}\right\rangle=\widehat{\rm{O}}^+\left(\Lambda_h\right),    
\end{equation}
or equivalently ${\rm{PMon}}^2\left(\Lambda,h\right)={\rm{P}}\widehat{\rm{O}}^+\left(\Lambda_h\right)={\rm{P}}\widetilde{\rm{O}}^+\left(\Lambda_h\right)$.
\end{lemma}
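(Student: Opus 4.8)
The plan is to combine Lemma~\ref{sec7:lemma:first_MonKum2_2}, which gives ${\rm{Mon}}^2(\Lambda,h)=\langle\widetilde{\rm{SO}}^+(\Lambda_h),\sigma_\kappa\rangle$ with $\kappa=\delta$ if $\gamma=1$ and $\kappa=v$ if $\gamma=2$, with a bookkeeping of the two characters $\det$ and $\chi$ on $\widehat{\rm{O}}^+(\Lambda_h)$, where $\chi(g)=+1$ if $g$ acts trivially on $D(\Lambda_h)$ and $\chi(g)=-1$ if $g$ acts by $-{\rm Id}$. Bundling these into a homomorphism $\Theta=(\det,\chi)\colon\widehat{\rm{O}}^+(\Lambda_h)\to\{\pm1\}\times\{\pm1\}$, one has $\ker\Theta=\widetilde{\rm{SO}}^+(\Lambda_h)$ by definition. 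Since $\Lambda_h$ splits off a copy of $U$, the reflection $\sigma_{e-f}$ in a $(-2)$-vector $e-f\in U$ lies in $\widetilde{\rm{O}}^+(\Lambda_h)$ with $\Theta(\sigma_{e-f})=(-1,+1)$; and the involution $-{\rm Id}_{\Lambda_h}$ acts trivially on $\mathcal{D}_{\Lambda_h}$, hence belongs to ${\rm{O}}^+(\Lambda_h)$, has determinant $+1$ because $\Lambda_h$ has even rank $6$, and acts by $-{\rm Id}$ on $D(\Lambda_h)$, so $-{\rm Id}_{\Lambda_h}\in\widehat{\rm{O}}^+(\Lambda_h)$ with $\Theta(-{\rm Id})=(+1,-1)$. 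Thus $\Theta$ is surjective, and once we know $\sigma_\kappa\in\widehat{\rm{O}}^+(\Lambda_h)$ with $\Theta(\sigma_\kappa)=(-1,-1)$, the group $\langle{\rm{Mon}}^2(\Lambda,h),-{\rm Id}\rangle=\langle\widetilde{\rm{SO}}^+(\Lambda_h),\sigma_\kappa,-{\rm Id}\rangle$ lies in $\widehat{\rm{O}}^+(\Lambda_h)$, contains $\ker\Theta=\widetilde{\rm{SO}}^+(\Lambda_h)$, and surjects under $\Theta$ because $(-1,-1)$ and $(+1,-1)$ generate $\{\pm1\}\times\{\pm1\}$; hence it equals $\widehat{\rm{O}}^+(\Lambda_h)$, which is \eqref{sec7:eq:monkum}.

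So the only non-formal step is to verify that $\sigma_\kappa$ acts by $-{\rm Id}$ on $D(\Lambda_h)$. For $\gamma=1$ this is immediate from the orthogonal splitting $\Lambda_h=N\perp\mathbb{Z}\delta$ with $N=U^{\oplus2}\oplus\mathbb{Z}\ell$: then $\sigma_\delta={\rm Id}_N\perp(-{\rm Id}_{\mathbb{Z}\delta})$ acts on $D(\Lambda_h)\cong D(N)\oplus\mathbb{Z}/2(n+1)$ by $({\rm Id},-{\rm Id})$, and this coincides with $-{\rm Id}$ since $D(N)\cong\mathbb{Z}/2$ is $2$-torsion. For $\gamma=2$ there is no orthogonal splitting of $\Lambda_h$ along $\mathbb{Z} v$, and this is where I expect the main (still routine) computation to lie: using $d=4t-(n+1)=1$, i.e.\ $4t=n+2$, one checks that $w:=2u+v$ satisfies $w^2=-2$ and $\langle w,v\rangle=0$, so $\sigma_v$ restricts on the index-two sublattice $M:=U^{\oplus2}\oplus\mathbb{Z} w\oplus\mathbb{Z} v\subset\Lambda_h$ to ${\rm Id}_{U^{\oplus2}\oplus\mathbb{Z} w}\perp(-{\rm Id}_{\mathbb{Z} v})$ and hence acts by $-{\rm Id}$ on $D(M)=D(\mathbb{Z} w)\oplus D(\mathbb{Z} v)$ (again because $D(\mathbb{Z} w)\cong\mathbb{Z}/2$ is $2$-torsion); transporting this $-{\rm Id}$-action to the subquotient $D(\Lambda_h)=H^\perp/H$ of $D(M)$, with $H=\Lambda_h/M$ isotropic, gives the claim. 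Alternatively one can verify the same directly on a generator of the cyclic group $D(Q_t)$.

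Finally, for the equivalent formulation, note that passing to projective groups kills $-{\rm Id}$, so ${\rm{PMon}}^2(\Lambda,h)$ equals the image of $\langle{\rm{Mon}}^2(\Lambda,h),-{\rm Id}\rangle=\widehat{\rm{O}}^+(\Lambda_h)$ in the projective orthogonal group, which gives ${\rm{PMon}}^2(\Lambda,h)={\rm{P}}\widehat{\rm{O}}^+(\Lambda_h)$. For the remaining equality, $D(\Lambda_h)$ is never $2$-torsion in these cases — it is $\mathbb{Z}/2\times\mathbb{Z}/2(n+1)$ with $n\ge2$ when $\gamma=1$, and has odd order $n+1>1$ when $\gamma=2$ (since $d=1$ forces $n\equiv2\pmod4$) — so $-{\rm Id}$ acts nontrivially on it, whence $-{\rm Id}_{\Lambda_h}\in\widehat{\rm{O}}^+(\Lambda_h)\setminus\widetilde{\rm{O}}^+(\Lambda_h)$; since the image of $\widehat{\rm{O}}^+(\Lambda_h)$ in ${\rm{O}}(D(\Lambda_h))$ has order at most two, this forces $\widehat{\rm{O}}^+(\Lambda_h)=\langle\widetilde{\rm{O}}^+(\Lambda_h),-{\rm Id}\rangle$, and hence ${\rm{P}}\widehat{\rm{O}}^+(\Lambda_h)={\rm{P}}\widetilde{\rm{O}}^+(\Lambda_h)$.
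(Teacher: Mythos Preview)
Your argument is correct and follows essentially the same route as the paper: both decompose $\widehat{\rm{O}}^+(\Lambda_h)$ according to the pair of characters $(\det,\chi)$, identify ${\rm{Mon}}^2(\Lambda,h)$ with the $(\det\cdot\chi=+1)$-half via Lemma~\ref{sec7:lemma:first_MonKum2_2}, and then adjoin $-\Id$ (which lies in the other half) to get everything. The only substantive difference is in the verification that $\chi(\sigma_\kappa)=-1$ when $\gamma=2$: the paper does this in one line by observing that $|D(\Lambda_h)|=d(n+1)=n+1$ and that $v$ has divisibility $n+1$, so $D(\Lambda_h)$ is cyclic generated by $\kappa_*=v/(n+1)$, on which $\sigma_v$ visibly acts by $-1$. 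This is exactly the ``alternative'' you allude to at the end, and it is considerably shorter than your sublattice-and-subquotient computation with $w=2u+v$ (which is correct but unnecessary here). Your remark about $\sigma_{e-f}$ is also harmless but redundant: once you know $\Theta(\sigma_\kappa)=(-1,-1)$ and $\Theta(-\Id)=(+1,-1)$, the image of $\langle\widetilde{\rm{SO}}^+(\Lambda_h),\sigma_\kappa,-\Id\rangle$ under $\Theta$ is already all of $\{\pm1\}^2$, which forces equality with $\widehat{\rm{O}}^+(\Lambda_h)$ without any separate surjectivity check.
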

\begin{proof}
Note that, under our hypothesis, $-\sigma_\kappa\in \widetilde{\rm{O}}^+\left(\Lambda_h\right)$.
For $\gamma=1$, this holds since $\ell_*=-\ell_*$. For $\gamma=2$, observe that $|D(\Lambda_h)|=d\cdot (n+1)=n+1$, hence $D(\Lambda_h)=\langle \kappa_* \rangle$ since $\kappa=v$ is primitive with divisibility $n+1$; clearly $\sigma_\kappa(\kappa_*)=-\kappa_*$.
Now we prove \eqref{sec7:eq:monkum} under the more general hypothesis that $-\sigma_\kappa\in \widetilde{\rm{O}}^+\left(\Lambda_2\right)$ i.e. $\chi(\sigma_\kappa)=-1$. 

We can write $\widehat{\rm{O}}^+\left(\Lambda_h\right)=
\bigcup_{i,j\in \lbrace-1,+1\rbrace} M_{i,j}$,
where $M_{i,j}$ is the set of isometries $g\in \widehat{\rm{O}}^+\left(\Lambda_2\right)$ such that $(\chi(g),\det(g))=(i,j)$.
Clearly $\widetilde{\rm{SO}}^+(\Lambda_h)=M_{+1,+1}$ and, under our hypothesis, $\sigma_\kappa\cdot \widehat{\rm{SO}}^+(\Lambda_h)=M_{-1,-1}$.
By Lemma \ref{sec7:lemma:first_MonKum2_2} then ${\rm{Mon}}^2\left(\Lambda,h\right)=M_{+1,+1}\cup M_{-1,-1}$. Now $-\Id\in M_{-1,1}$, since $\Lambda_h$ has even rank, hence $-\Id\cdot M_{i,j}=M_{-i,j}$ and \eqref{sec7:eq:monkum} follows.
\end{proof}

\begin{theorem}\label{sec7:thm:kum_uniruled}
The moduli spaces $\mathcal{M}_{{\rm{Kum}}_n,2}^1$ and $\mathcal{M}_{{\rm{Kum}}_n,2}^2$ of hyperk\"{a}hler $2n$-folds of generalized Kummer type with polarization of degree $2$ and divisibility $\gamma=1,2$ are uniruled in the following cases:
\begin{itemize}
\item[(i)] when $\gamma=1$ for $n\leq 15$ and $n=17,20$,
\item[(ii)] when $\gamma=2$ for $t\leq 11$ and $t=13,15,17,19$, where $n=4t-2$.
\end{itemize}
\end{theorem}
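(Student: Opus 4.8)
The plan is to apply Proposition~\ref{prop: general uniruledness} to the period lattice $\Lambda_h$ of each of these moduli spaces. The first step is to reduce the uniruledness of $\mathcal{M}_{{\rm{Kum}}_n,2}^\gamma$ to that of the orthogonal modular variety $X=\mathcal{D}_{\Lambda_h}\big/\widetilde{\rm{O}}^+(\Lambda_h)$ appearing in that proposition. By Lemma~\ref{sec7:lemma:MonKum2_2}, for $d=1$ and $\gamma\in\{1,2\}$ one has ${\rm{PMon}}^2(\Lambda,h)={\rm{P}}\widetilde{\rm{O}}^+(\Lambda_h)$, i.e.\ $\langle{\rm{Mon}}^2(\Lambda,h),-{\rm{Id}}\rangle=\langle\widetilde{\rm{O}}^+(\Lambda_h),-{\rm{Id}}\rangle$; since $-{\rm{Id}}$ acts trivially on $\mathcal{D}_{\Lambda_h}\subset\mathbb{P}(\Lambda_h\otimes\mathbb{C})$, the two groups have the same orbits on the period domain, whence $\mathcal{M}_{{\rm{Kum}}_n,2}^\gamma=\mathcal{D}_{\Lambda_h}\big/{\rm{Mon}}^2(\Lambda,h)=X$ as varieties. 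Next I would record $\Lambda_h$ explicitly: for $\gamma=1$ it is $\Lambda_h=U^{\oplus2}\oplus Q_d$ with $d=1$, and for $\gamma=2$ --- which is non-empty only for $d\equiv-(n+1)\bmod 4$, i.e.\ $n=4t-2$ --- it is $\Lambda_h=U^{\oplus2}\oplus Q_t$. In both cases $\Lambda_h$ is an even lattice of signature $(2,4)$ splitting off two copies of the hyperbolic plane, so the hypotheses of Proposition~\ref{prop: general uniruledness} are met, with the relevant weight $\tfrac{4+2}{2}=3$.

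With this in place, Proposition~\ref{prop: general uniruledness} reduces the problem to the numerical inequality $4\,c_{0,0}(E_{3,\Lambda_h})+\tfrac14\,c_{1,0}(E_{3,\Lambda_h})<0$; since $c_{0,0}(E_{3,\Lambda_h})=1$, this is simply $c_{1,0}(E_{3,\Lambda_h})<-16$, where $c_{1,0}(E_{3,\Lambda_h})$ is the $q^1$-coefficient of the $\mathfrak{e}_0$-component of the weight-$3$ Eisenstein series attached to $\Lambda_h$. These coefficients are given in closed form by the Bruinier--Kuss formula \cite{BK01} and are computed by the Sage package \cite{weilrep}, so the proof is finished by running through the finitely many lattices $\Lambda_h$ in the stated ranges and checking the inequality. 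As in the proof of Theorem~\ref{sec7:them:OG6uniruled}, I would exhibit the extremal cases by writing out the $\mathfrak{e}_0$-component of $E_{3,\Lambda_h}$ --- for $\gamma=1$ at $n=20$, and for $\gamma=2$ at $t=19$ --- and note that the inequality fails for the cases omitted from the list, which is why the ranges have the gaps they do.

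Conceptually the argument is light: it reduces, via the monodromy computations of Section~\ref{sec: uniruled} and Proposition~\ref{prop: general uniruledness}, to a finite check on Eisenstein coefficients. The step needing genuine care is the identification $\mathcal{M}_{{\rm{Kum}}_n,2}^\gamma=\mathcal{D}_{\Lambda_h}\big/\widetilde{\rm{O}}^+(\Lambda_h)$: one must use that $-{\rm{Id}}$ acts trivially on the period domain in order to replace ${\rm{Mon}}^2(\Lambda,h)$ by $\widetilde{\rm{O}}^+(\Lambda_h)$, since Proposition~\ref{prop: general uniruledness} is stated only for $\widetilde{\rm{O}}^+(\Lambda_h)$ and the two groups genuinely differ, and one uses that $\Lambda_h$ splits off two hyperbolic planes so that Remark~\ref{rem: contained in branch}, hence the $(-2)$-reflections that dominate the branch divisor, is available. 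The real limitation --- more a sharpness issue than an obstacle --- is that the criterion rests on the crude estimate $\tfrac12{\rm{Br}}(\pi)\ge\tfrac14 H_{-1,0}$ and therefore says nothing about the omitted cases or about larger $n$ and $t$; deciding those would require a full description of ${\rm{Br}}(\pi)$ as in Proposition~\ref{prop: uniruled with branch}.
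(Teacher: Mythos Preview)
Your proposal is correct and follows essentially the same route as the paper: reduce via Lemma~\ref{sec7:lemma:MonKum2_2} to the quotient $\mathcal{D}_{\Lambda_h}\big/\widetilde{\rm{O}}^+(\Lambda_h)$, then apply Proposition~\ref{prop: general uniruledness} with $n=4$ and weight $3$, reducing to the finite numerical check $c_{1,0}(E_{3,\Lambda_h})<-16$ carried out with \cite{BK01} and \cite{weilrep}. Your treatment of the identification $\mathcal{M}_{{\rm{Kum}}_n,2}^\gamma=\mathcal{D}_{\Lambda_h}\big/\widetilde{\rm{O}}^+(\Lambda_h)$ via the triviality of $-\mathrm{Id}$ on the period domain is in fact slightly more explicit than the paper's one-line appeal to Lemma~\ref{sec7:lemma:MonKum2_2}.
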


\begin{proof}
As in the proof of Theorem \ref{sec7:them:OG6uniruled}, we show that $X=\mathcal{D}\big/\widetilde{\rm{O}}^+(\Lambda_h)$ is uniruled and by Lemma \ref{sec7:lemma:MonKum2_2} we conclude uniruledness for $\mathcal{M}_{{\rm{Kum}}_2,2}^\gamma$. By Proposition \ref{prop: general uniruledness}, we just need to verify that $4c_{0,0}(E_{3,\Lambda_h})+\frac{1}{4}c_{-1,0}(E_{3,\Lambda_h})<0$. 
 Again, we exhibit only one case. If $E_{3,\Lambda_h}=\sum_{\mu\in D(\Lambda_h)}E_\mu(q)e_\mu$, one computes \cite{weilrep}:
\[
E_0(q)=\begin{cases}1 - \frac{4250}{263}q - \frac{12600}{263}q^2 + O(q^3)&\hbox{ if $n=20$ and $\gamma=1$}\\
1 - \frac{130}{7}q - \frac{288}{7}q^2 + O(q^3)&\hbox{ if $n=4t-2$ with $t=19$ and $\gamma=2$}.
\end{cases}
\]
\end{proof}

We remark here (see Lemma \ref{sec7:lemma:MonKum2_2}) that the modular variety 
\[
\mathcal{M}^2_{{\rm{Kum}}_2,2}=\mathcal{D}\big/\widetilde{\rm{O}}^+\left(\Lambda_h\right),
\]
where $\Lambda_h=U^{\oplus 2}\oplus A_2(-1)$,
is known to be rational \cite{WW21}*{Theorem 5.4}. More concretely, there is a finite union of Heegner divisors $\mathcal{H}$, see \cite{WW21}*{Equation 5.8}, such that the algebra of meromorphic modular forms ${\rm{M}}_*^!\left(\widetilde{\rm{O}}^+(\Lambda_h),\mathcal{H}\right)$, that is, meromorphic sections of $\lambda^{\otimes k}$ with $k\in \mathbb{Z}$ and poles supported along $\mathcal{H}$, is finitely generated by forms of positive weight. By work of Looijenga \cite{Loo03} the projective variety $\widehat{X}={\rm{Proj}}\left(\oplus_{k\geq0}{\rm{M}}_k^!\left(\widetilde{\rm{O}}^+, \mathcal{H}\right)\right)$ is a compactification of $\mathcal{D}\big/\widetilde{\rm{O}}^+-\mathcal{H}$ that interpolates between the Baily--Borel and toroidal compactifications. When the generators are relation-free, as it is shown in \cite{WW21} for $\Lambda_h=U^{\oplus 2}\oplus A_2(-1)$, the resulting ring is a polynomial algebra with generators of mixed weights. In this case $\widehat{X}$ is a weighted projective space, in particular rational. The same holds for some of the first OG6 cases. Indeed if $\Lambda_h=U^{\oplus 2}\oplus A_1(-1)^{\oplus 3}$ or $\Lambda_h=U^{\oplus 2}\oplus A_1(-1)^{\oplus 1}\oplus A_2(-1)$, then \cite{WW21}*{Theorem 5.4} implies that the resulting modular varieties $\mathcal{D}\big/\widetilde{\rm{O}}^+(\Lambda_h)$ are also rational. We summarize the results relevant for this paper:

\begin{theorem}[Theorem 5.4 in \cite{WW21}]
The moduli space $\mathcal{M}_{{\rm{Kum}}_2,2}^2$ is rational and the moduli spaces $\mathcal{M}_{{\rm{OG6}},6}^2$ and $\mathcal{M}_{{\rm{OG6}},2}^1$ are unirational.
\end{theorem}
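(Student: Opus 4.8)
The plan is to reduce each assertion to \cite{WW21}*{Theorem 5.4} by first identifying the relevant orthogonal modular variety together with its arithmetic group, and then transporting rationality along Looijenga's construction (and, in one case, through a finite quotient).

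First I would unwind the lattice-theoretic data of Section~\ref{sec: uniruled}. For $\mathcal{M}_{{\rm{Kum}}_2,2}^2$ we have $n=2$, $\gamma=2$, $d=1$, hence $t=1$, and $\Lambda_h\cong U^{\oplus 2}\oplus Q_1$, where $Q_1$ is spanned by $u,v$ with $u^2=-2$, $v^2=-6$, $\langle u,v\rangle=3$; replacing $v$ by $u+v$ (which has square $-2$ and pairs with $u$ to $1$) identifies $Q_1\cong A_2(-1)$, so $\Lambda_h\cong U^{\oplus 2}\oplus A_2(-1)$, and Lemma~\ref{sec7:lemma:MonKum2_2} gives ${\rm{PMon}}^2(\Lambda,h)={\rm{P}}\widetilde{\rm{O}}^+(\Lambda_h)$, so that $\mathcal{M}_{{\rm{Kum}}_2,2}^2=\mathcal{D}_{\Lambda_h}\big/\widetilde{\rm{O}}^+(\Lambda_h)$. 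For $\mathcal{M}_{{\rm{OG6}},6}^2$ we have $\gamma=2$ and $d=3=4\cdot1-1$, so $\Lambda_h\cong U^{\oplus 2}\oplus A_1(-1)\oplus A_2(-1)$, and Lemma~\ref{sec7:lemma:monOG6} identifies the polarized monodromy group with $\widetilde{\rm{O}}^+(\Lambda_h)$, so $\mathcal{M}_{{\rm{OG6}},6}^2=\mathcal{D}_{\Lambda_h}\big/\widetilde{\rm{O}}^+(\Lambda_h)$. For $\mathcal{M}_{{\rm{OG6}},2}^1$ we are in the split case $\gamma=1$, $d=1$, so $\Lambda_h\cong U^{\oplus 2}\oplus A_1(-1)^{\oplus 3}$; here Lemma~\ref{sec7:lemma:monOG6} only gives ${\rm{O}}^+(\Lambda,h)=\langle\widetilde{\rm{O}}^+(\Lambda_h),\sigma_{\delta_1-\delta_2}\rangle$, an index-two extension of $\widetilde{\rm{O}}^+(\Lambda_h)$, so $\mathcal{M}_{{\rm{OG6}},2}^1$ is the quotient of $\mathcal{D}_{\Lambda_h}\big/\widetilde{\rm{O}}^+(\Lambda_h)$ by an involution.

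Next I would invoke \cite{WW21}*{Theorem 5.4}: for each of the three lattices $U^{\oplus 2}\oplus A_2(-1)$, $U^{\oplus 2}\oplus A_1(-1)\oplus A_2(-1)$, and $U^{\oplus 2}\oplus A_1(-1)^{\oplus 3}$ there is an explicit finite union of Heegner divisors $\mathcal{H}$ such that the ring ${\rm{M}}_*^!(\widetilde{\rm{O}}^+(\Lambda_h),\mathcal{H})$ of meromorphic modular forms with poles supported along $\mathcal{H}$ is a free polynomial algebra on finitely many generators of positive (mixed) weights. By Looijenga's theory \cite{Loo03}, the projective variety $\widehat{X}={\rm{Proj}}\big(\bigoplus_{k\ge0}{\rm{M}}_k^!(\widetilde{\rm{O}}^+(\Lambda_h),\mathcal{H})\big)$ is a compactification of $\big(\mathcal{D}_{\Lambda_h}\big/\widetilde{\rm{O}}^+(\Lambda_h)\big)\setminus\mathcal{H}$; being the ${\rm{Proj}}$ of a graded polynomial ring it is a weighted projective space, hence rational, and as $\mathcal{H}$ is a divisor $\widehat{X}$ is birational to $\mathcal{D}_{\Lambda_h}\big/\widetilde{\rm{O}}^+(\Lambda_h)$. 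Therefore $\mathcal{D}_{\Lambda_h}\big/\widetilde{\rm{O}}^+(\Lambda_h)$ is rational in all three cases.

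Finally I would conclude. For $\mathcal{M}_{{\rm{Kum}}_2,2}^2$ and $\mathcal{M}_{{\rm{OG6}},6}^2$ the moduli space equals $\mathcal{D}_{\Lambda_h}\big/\widetilde{\rm{O}}^+(\Lambda_h)$ and is thus rational, in particular unirational; for $\mathcal{M}_{{\rm{OG6}},2}^1$, which receives a dominant degree-two morphism from the rational variety $\mathcal{D}_{\Lambda_h}\big/\widetilde{\rm{O}}^+(\Lambda_h)$, unirationality is immediate. The substantive input is \cite{WW21}*{Theorem 5.4} itself — the freeness of the graded ring — which is being quoted, so the only real work is the bookkeeping of the first two paragraphs; I expect the main point requiring care to be the identification of $\Lambda_h$ up to isometry with one of the lattices treated in \cite{WW21}, together with the verification (furnished by Lemmas~\ref{sec7:lemma:monOG6} and~\ref{sec7:lemma:MonKum2_2}) that the arithmetic group governing the moduli space is exactly $\widetilde{\rm{O}}^+(\Lambda_h)$ up to the stated index-two extension, so that the modular variety really is the one whose ring \cite{WW21} describes.
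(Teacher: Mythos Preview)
Your proposal is correct and follows essentially the same approach as the paper: the paper's proof is the one-liner ``immediate consequence of \cite{WW21}*{Theorem 5.4} together with Lemmas~\ref{sec7:lemma:monOG6} and~\ref{sec7:lemma:MonKum2_2},'' and you have simply unpacked this by making the lattice identifications and the Looijenga argument explicit (the latter already appearing in the paper's discussion immediately preceding the theorem). As a bonus, your argument actually yields rationality of $\mathcal{M}_{{\rm{OG6}},6}^2$, slightly more than the stated unirationality.
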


\begin{proof}
This is an immediate consequence of \cite{WW21}*{Theorem 5.4} together with Lemmas \ref{sec7:lemma:monOG6} and \ref{sec7:lemma:MonKum2_2}.
\end{proof}

We note that the strategy in Theorem \ref{sec7:thm:kum_uniruled} fails for $\gamma\geq3$. In this case, a nef curve intersecting the canonical class negatively would have to intersect the boundary of a toroidal compactification because the canonical class is always in the interior of the NL cone and in fact is the restriction of an ample class on the Baily--Borel model.

\begin{proposition}\label{prop: higher div}
If $\gamma=3,6$ and the moduli space $\mathcal{M}_{{\rm{Kum}}_2,2d}^\gamma$ is non-empty, then the canonical class of any component $\mathcal{M}$ of the moduli space $\mathcal{M}_{{\rm{Kum}}_2,2d}^\gamma$ 
is given by 
\[
K_{\mathcal{M}}=4\lambda.
\]
In particular, $K_{\mathcal{M}}$ lies in the interior of the NL cone and it has positive intersection with any complete curve not intersecting the boundary of a toroidal compactification.
\end{proposition}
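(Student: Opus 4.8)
The plan is to reduce the assertion $K_{\mathcal M}=4\lambda$ to the vanishing of the branch divisor of the period quotient, and then to invoke the Riemann--Hurwitz formula \eqref{sec3:eq:K}. By Lemma \ref{sec7:lemma:first_MonKum2_2}, for $\gamma=3,6$ one has ${\rm{Mon}}^2(\Lambda,h)=\widetilde{\rm{SO}}^+(\Lambda_h)$, where $\Lambda_h$ is the orthogonal complement of $h$ in $\Lambda=U^{\oplus 3}\oplus A_1(-3)$; this has rank $6$ and signature $(2,4)$ independently of $d$, so every component $\mathcal M$ of $\mathcal M_{{\rm{Kum}}_2,2d}^\gamma$ is of the form $\mathcal D_{\Lambda_h}\big/\widetilde{\rm{SO}}^+(\Lambda_h)$, with $n=4$. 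It therefore suffices to show that the quotient map $\pi\colon\mathcal D_{\Lambda_h}\to\mathcal M$ is unramified in codimension one, since then \eqref{sec3:eq:K} yields $K_{\mathcal M}=n\lambda-\tfrac12{\rm{Br}}(\pi)=4\lambda$. (The Riemann--Hurwitz derivation of \eqref{sec3:eq:K}, together with the description of the ramification locus recalled before Proposition \ref{prop: uniruled with branch}, applies to an arbitrary finite-index subgroup of $\widetilde{\rm{O}}^+(\Lambda_h)$, not only to $\widetilde{\rm{O}}^+(\Lambda_h)$ itself.)

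For the vanishing ${\rm{Br}}(\pi)=0$: by \cite{GHS07}*{Theorem 2.12} the map $\pi$ is ramified over a divisor exactly along those $D_\rho$ for which $\rho$ is reflexive for $\widetilde{\rm{SO}}^+(\Lambda_h)$, i.e.\ for which $\sigma_\rho$ or $-\sigma_\rho$ lies in $\widetilde{\rm{SO}}^+(\Lambda_h)$; indeed an isometry fixing $D_\rho$ pointwise must act by a scalar on $\rho^{\perp}_{\mathbb{C}}$ (the quadric $D_\rho$ spans $\mathbb P(\rho^{\perp}_{\mathbb{C}})$ and is irreducible of dimension $\ge 3$), hence must coincide with one of ${\rm{Id}},-{\rm{Id}},\sigma_\rho,-\sigma_\rho$, and only the latter two act nontrivially on $\mathcal D_{\Lambda_h}$. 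Now $\det\sigma_\rho=-1$, and since $\Lambda_h$ has even rank $6$ we also get $\det(-\sigma_\rho)=(-1)^{6}\det\sigma_\rho=-1$. As every element of $\widetilde{\rm{SO}}^+(\Lambda_h)$ has determinant $+1$, neither $\sigma_\rho$ nor $-\sigma_\rho$ can belong to it, so there are no reflexive vectors and ${\rm{Br}}(\pi)=0$. This gives $K_{\mathcal M}=4\lambda$.

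It remains to check the two ``in particular'' statements. Since $4\lambda$ is a positive multiple of the Hodge class $\lambda$, it lies in the interior of the NL cone by \cite{BM19} (cf.\ Section \ref{section: NL cone prelim}). For the last claim, let $\varepsilon\colon\overline{\mathcal M}^{\rm{tor}}\to\overline{\mathcal M}^{\rm{BB}}$ be the contraction from a toroidal compactification to the Baily--Borel compactification, which restricts to the identity on $\mathcal M$; if $C\subset\mathcal M$ is a complete curve disjoint from the toroidal boundary, then $\varepsilon$ maps $C$ isomorphically onto a complete curve in $\overline{\mathcal M}^{\rm{BB}}$, so the ampleness of $\lambda$ on $\overline{\mathcal M}^{\rm{BB}}$ and the projection formula give $K_{\mathcal M}\cdot C=4\,\varepsilon^*\lambda\cdot C>0$.

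The only delicate point is the vanishing ${\rm{Br}}(\pi)=0$: one must be sure that no non-reflection isometry contributes a divisorial fixed locus and, crucially, that the parity identity $\det(-\sigma_\rho)=-1$ — which uses the evenness of ${\rm{rk}}\,\Lambda_h$ — is exactly what removes the ``quasi-reflections'' $-\sigma_\rho$ that would otherwise enter the branch locus; this is the feature distinguishing the $\widetilde{\rm{SO}}^+$-quotients here from the $\widetilde{\rm{O}}^+$-quotients treated elsewhere in the paper. Once one has checked that \cite{GHS07}*{Theorem 2.12} and formula \eqref{sec3:eq:K}, stated in the excerpt for $\widetilde{\rm{O}}^+(\Lambda)$, hold verbatim for the $\widetilde{\rm{SO}}^+$-quotient (both reduce to the same local computation), the argument is complete.
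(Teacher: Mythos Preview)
Your proof is correct and follows exactly the same approach as the paper: invoke Lemma \ref{sec7:lemma:first_MonKum2_2} to identify ${\rm{Mon}}^2(\Lambda,h)=\widetilde{\rm{SO}}^+(\Lambda_h)$ for $\gamma\ge 3$, then observe that on a lattice of even rank both $\sigma_\rho$ and $-\sigma_\rho$ have determinant $-1$ and hence lie outside $\widetilde{\rm{SO}}^+(\Lambda_h)$, so ${\rm{Br}}(\pi)=0$ and $K_{\mathcal M}=4\lambda$ by \eqref{sec3:eq:K}. The paper's proof is a one-line version of yours; your additional discussion of the ``in particular'' clauses and the applicability of \cite{GHS07}*{Theorem 2.12} to $\widetilde{\rm{SO}}^+$-quotients simply spells out what the paper leaves implicit.
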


\begin{proof}
By Lemma \ref{sec7:lemma:first_MonKum2_2}, the branch divisor of the modular projection $\pi:\mathcal{D}\longrightarrow \mathcal{D}\big/{\rm{Mon}}^2\left(\Lambda,h\right)$ is trivial, since both $\sigma_\rho$ and $-\sigma_\rho$ have negative determinant on a lattice of even rank, see \eqref{eq:sec3:ref} and its surrounding discussion.
\end{proof}

\begin{remark}
In the case of the rational moduli space  $\left(\mathcal{M}_{{\rm{Kum}}_2,2}^2\right)^\circ$, parameterizing polarized hyperk\"{a}hler fourfolds with polarization of degree $2$ and divisibility $2$, the rational Picard group ${\rm Pic}_\mathbb{Q}\left(\mathcal{M}_{{\rm{Kum}}_2,2}^2\right)$ is one-dimensional, since the space of cusp forms $S_{3,\Lambda_h}$ is trivial (this can be computed using \cite{weilrep}). In particular, in this case we have the equality
\[{\overline{\rm{Eff}}}\left(\mathcal{M}_{{\rm{Kum}}_2,2}^2\right)={\rm{Eff}}^{NL}\left(\mathcal{M}_{{\rm{Kum}}_2,2}^2\right)=\mathbb{Q}_{\geq 0}\lambda,\]
all generated by a single (any) Heegner divisor.
As we detail below, the fact that the rational Picard group is one-dimensional moreover implies the statement of Theorem \ref{thm: quasi-affine} that the moduli space $\left(\mathcal{M}_{{\rm{Kum}}_2,2}^2\right)^\circ$ is quasi-affine, meaning that $\left(\mathcal{M}_{{\rm{Kum}}_2,2}^2\right)^\circ$ is an open subset of an affine variety. The statement of Corollary \ref{cor: isotrivial} that any family of polarized hyperk\"ahler fourfolds in $\left(\mathcal{M}_{{\rm{Kum}}_2,2}^2\right)^\circ$ that lies over a projective base must be isotrivial then follows immediately.
    
\end{remark}

\begin{proof}[Proof of Theorem \ref{thm: quasi-affine}]
Recall that via the period map \cite{Ver13} and 
 Lemma \ref{sec7:lemma:first_MonKum2_2} we have an open embedding
\begin{equation}
\label{sec7:eq:torelli_Kum}
\left(\mathcal{M}_{{\rm{Kum}}_2,2}^2\right)^\circ\longrightarrow \mathcal{D}\big/{\rm{Mon}}^2\left(\Lambda,h\right)=\mathcal{D}\big/\widetilde{\rm{O}}^+\left(\Lambda_h\right)=\mathcal{M}_{{\rm{Kum}}_2,2}^2.
\end{equation}
Since ${\rm Pic}_\mathbb{Q}\left(\mathcal{M}_{{\rm{Kum}}_2,2}^2\right)$ is one-dimensional, it is thus enough to show that  
 the complement of $\left(\mathcal{M}_{{\rm{Kum}}_2,2}^2\right)^\circ$ in $\mathcal{M}_{{\rm{Kum}}_2,2}^2$ contains a primitive Heegner divisor $P_\rho$ since then $P_\rho$ is a positive rational multiple of the Hodge class $\lambda$ and so is ample, meaning that $\left(\mathcal{M}_{{\rm{Kum}}_2,2}^2\right)^\circ$ is an open subset of the complement of a hyperplane in the Baily-Borel compactification $\mathcal{D}\big/\widetilde{\rm{O}}^+\left(\Lambda_h\right)^{\rm BB}\subset \mathbb{P}^N$. This exactly means that $\left(\mathcal{M}_{{\rm{Kum}}_2,2}^2\right)^\circ$ is quasi-affine.

Recall that if $(X,H)$ is a polarized hyperk\"{a}hler fourfold of ${\rm{Kum}}_2$-type, then 
\[
\left(H^2\left(X,\mathbb{Z}\right),q_X\right)\cong \Lambda
\]
with $\Lambda=U^{\oplus 3}\oplus A_1(-3)$. We call  the generator of the last factor $\delta$ and let $h=c_1(H)$. By \cite{Yos16}, see also \cite{MTW2018}*{Page 452}, an ample class $h$ cannot lie in the orthogonal complement in $H^{1,1}(X,\mathbb{R})$ of classes $\rho\in {\rm{NS}}(X)$ whose square is $-6$ and divisibility in $H^2(X,\mathbb{Z})$ is $2,3$ or $6$. In particular, if such a class is orthogonal to $h$, then $D_\rho$ defines a hyperplane in $\mathcal{D}$ and the image of the period map misses the corresponding divisor $P_\rho$. Singling out classes in $H^{1,1}(X,\mathbb{Z})$ whose orthogonal complements give the chamber decomposition of the positive cone $C(X)\subset H^{1,1}(X,\mathbb{R})$ is a general method to describe the complement of the image of the period map, see for instance \cite{DM19}*{Theorem 6.1}. Thus, it is enough to show that there exists an integral class $\rho\in\Lambda_h$ of square $\langle\rho,\rho\rangle=-6$ and divisibility  $2,3,$ or $6$ in $\Lambda$. Since $\widetilde{\rm{SO}}^+(\Lambda)\subset {\rm{Mon}}^2(\Lambda)$, one can assume $h=2(e+f)-\delta$, and taking $\rho=3f-\delta$ one has the desired property. In this case the divisibility in $\Lambda$ is $3$, and the missed primitive Heegner divisor in $\mathcal{M}_{{\rm{Kum}}_2,2}^2$ is $P_\rho=P_{-\frac{1}{3},\frac{v}{3}}$. 
\end{proof}

\begin{remark} We have seen that one can compute the NL cone of  $\mathcal{M}_{{\rm{Kum}}_2,2}^2$ just from the fact that $\dim_\mathbb{Q}{\rm Pic}_\mathbb{Q}\left(\mathcal{M}_{{\rm{Kum}}_2,2}^2\right)=1$. 
The result of Lemma \ref{sec7:lemma:first_MonKum2_2} a priori means that one can also compute the NL cone of the moduli space
    $\mathcal{M}_{{\rm{Kum}}_2,2}^1$ using the method of Section \ref{sec: NL cone comp} for the quotient $\mathcal{D}\big/\widetilde{\rm{O}}^+\left(\Lambda_h\right)$. In this case, however, the bound obtained in Theorem \ref{thm: NL cone generators} is too large for this to be computationally feasible. A computation of the cone generated by all $P_{\Delta, \delta}$ with $\Delta\le 10000$ yields the conjectural description
\[
{\rm{Eff}}^{NL}\left(\mathcal{M}_{{\rm{Kum}}_2,2}^1\right)=\left\langle P_{-\frac{1}{12},\delta_*}, P_{-\frac{1}{4},\ell_*}\right\rangle_{\mathbb{Q}_{\geq0}}.
\]
\end{remark}

\section{Declarations}
\subsection{Funding} Ignacio Barros was supported by the Research Foundation – Flanders (FWO) within the framework of the Odysseus program project number G0D9323N and by the Deutsche Forschungsgemeinschaft (DFG, German Research Foundation) -- SFB-TRR 358/1 2023 -- 491392403. 
Pietro Beri was supported by the ERC Synergy Grant ERC-2020-SyG-854361-HyperK. 
Laure Flapan was supported by NSF grant DMS-2200800. Brandon Williams did not receive funding for the preparation of this manuscript. 

\subsection{Financial Interests} The authors have no relevant financial or non-financial interests to disclose.

\subsection{Conflict of Interest} On behalf of all authors, the corresponding author states that there is no conflict of interest.

\bibliography{Bibliography}

@article {ABL24,
    AUTHOR = {Agostini, D. and Barros, I. and Lai, K.-W.},
     TITLE = {On the irrationality of moduli spaces of projective
              hyperk\"ahler manifolds},
   JOURNAL = {\'Epijournal G\'eom. Alg\'ebrique},
  FJOURNAL = {\'Epijournal de G\'eom\'etrie Alg\'ebrique. EPIGA},
    VOLUME = {9},
      YEAR = {2025},
     PAGES = {Art. 13, 28},
      ISSN = {2491-6765},
   MRCLASS = {14E08 (14J10 14J42 14K10)},
}

@book {AMRT10,
    AUTHOR = {Ash, A. and Mumford, D. and Rapoport, M. and Tai, Y.-S.},
     TITLE = {Smooth compactifications of locally symmetric varieties},
    SERIES = {Cambridge Mathematical Library},
   EDITION = {Second edition},
      NOTE = {With the collaboration of Peter Scholze},
 PUBLISHER = {Cambridge University Press, Cambridge},
      YEAR = {2010},
     PAGES = {x+230},
      ISBN = {978-0-521-73955-9},
   MRCLASS = {14M27 (32J05 32M15)},
       DOI = {10.1017/CBO9780511674693},
       URL = {https://doi.org/10.1017/CBO9780511674693},
}

@article {BB66,
    AUTHOR = {Baily, Jr., W. L. and Borel, A.},
     TITLE = {Compactification of arithmetic quotients of bounded symmetric
              domains},
   JOURNAL = {Ann. of Math. (2)},
  FJOURNAL = {Annals of Mathematics. Second Series},
    VOLUME = {84},
      YEAR = {1966},
     PAGES = {442--528},
      ISSN = {0003-486X},
   MRCLASS = {32.65},
MRREVIEWER = {A. Kor\'{a}nyi},
       DOI = {10.2307/1970457},
       URL = {https://doi.org/10.2307/1970457},
}

@article{BBBF23,
      title={Kodaira dimension of moduli spaces of hyperk\"ahler varieties}, 
      author={Barros, I. and Beri, P. and Brakkee, E. and Flapan, L.},
      year={2023},
      journal = {Algebr. Geom. (to appear)},
note={\url{https://arxiv.org/abs/2212.12586}}
}

@article {BLM+21,
    AUTHOR = {Bayer, A. and Lahoz, M. and Macr\`i, E. and
              Nuer, H. and Perry, A. and Stellari, P.},
     TITLE = {Stability conditions in families},
   JOURNAL = {Publ. Math. Inst. Hautes \'Etudes Sci.},
  FJOURNAL = {Publications Math\'ematiques. Institut de Hautes \'Etudes
              Scientifiques},
    VOLUME = {133},
      YEAR = {2021},
     PAGES = {157--325},
      ISSN = {0073-8301,1618-1913},
   MRCLASS = {14F08 (14J42)},
       DOI = {10.1007/s10240-021-00124-6},
       URL = {https://doi.org/10.1007/s10240-021-00124-6},
}

@article {Bea83,
    AUTHOR = {Beauville, A.},
     TITLE = {Vari\'et\'es {K}\"ahleriennes dont la premi\`ere classe de
              {C}hern est nulle},
   JOURNAL = {J. Differential Geom.},
  FJOURNAL = {Journal of Differential Geometry},
    VOLUME = {18},
      YEAR = {1983},
    NUMBER = {4},
     PAGES = {755--782},
      ISSN = {0022-040X,1945-743X},
   MRCLASS = {32J25 (14J15 32C10 32G13 53C55)},
       URL = {http://projecteuclid.org/euclid.jdg/1214438181},
}

@article {BD85,
    AUTHOR = {Beauville, A. and Donagi, R.},
     TITLE = {La vari\'{e}t\'{e} des droites d'une hypersurface cubique de dimension {$4$}},
   JOURNAL = {C. R. Acad. Sci. Paris S\'{e}r. I Math.},
    VOLUME = {301},
      YEAR = {1985},
     PAGES = {703--706},
}

@article {BLMM17,
    AUTHOR = {Bergeron, N. and Li, Z. and Millson, J. and
              Moeglin, C.},
     TITLE = {The {N}oether-{L}efschetz conjecture and generalizations},
   JOURNAL = {Invent. Math.},
  FJOURNAL = {Inventiones Mathematicae},
    VOLUME = {208},
      YEAR = {2017},
    NUMBER = {2},
     PAGES = {501--552},
      ISSN = {0020-9910},
   MRCLASS = {14J28 (14C22 14J15)},
MRREVIEWER = {Alan Matthew Thompson},
       DOI = {10.1007/s00222-016-0695-z},
       URL = {https://doi.org/10.1007/s00222-016-0695-z},
}

@article {Bor98,
    AUTHOR = {Borcherds, R. E.},
     TITLE = {Automorphic forms with singularities on {G}rassmannians},
   JOURNAL = {Invent. Math.},
  FJOURNAL = {Inventiones Mathematicae},
    VOLUME = {132},
      YEAR = {1998},
    NUMBER = {3},
     PAGES = {491--562},
      ISSN = {0020-9910,1432-1297},
   MRCLASS = {11F37 (11F22 14J28 17B67 57R57)},
MRREVIEWER = {I.\ Dolgachev},
       DOI = {10.1007/s002220050232},
       URL = {https://doi.org/10.1007/s002220050232},
}

@article {BKPS98,
    AUTHOR = {Borcherds, R. E. and Katzarkov, L. and Pantev, T.
              and Shepherd-Barron, N. I.},
     TITLE = {Families of {$K3$} surfaces},
   JOURNAL = {J. Algebraic Geom.},
  FJOURNAL = {Journal of Algebraic Geometry},
    VOLUME = {7},
      YEAR = {1998},
    NUMBER = {1},
     PAGES = {183--193},
      ISSN = {1056-3911},
   MRCLASS = {14J28 (11F55 14J05)},
MRREVIEWER = {Mark Gross},
}

@article {Bor99,
    AUTHOR = {Borcherds, R. E.},
     TITLE = {The {G}ross-{K}ohnen-{Z}agier theorem in higher dimensions},
   JOURNAL = {Duke Math. J.},
  FJOURNAL = {Duke Mathematical Journal},
    VOLUME = {97},
      YEAR = {1999},
    NUMBER = {2},
     PAGES = {219--233},
      ISSN = {0012-7094,1547-7398},
   MRCLASS = {11F55 (11F30 11F50 11G18)},
MRREVIEWER = {Rainer\ Schulze-Pillot},
       DOI = {10.1215/S0012-7094-99-09710-7},
       URL = {https://doi.org/10.1215/S0012-7094-99-09710-7},
}

@article {BK01,
    AUTHOR = {Bruinier, J. H. and Kuss, M.},
     TITLE = {Eisenstein series attached to lattices and modular forms on
              orthogonal groups},
   JOURNAL = {Manuscripta Math.},
  FJOURNAL = {Manuscripta Mathematica},
    VOLUME = {106},
      YEAR = {2001},
    NUMBER = {4},
     PAGES = {443--459},
      ISSN = {0025-2611,1432-1785},
   MRCLASS = {11F55 (11F30)},
MRREVIEWER = {V.\ Kumar\ Murty},
       DOI = {10.1007/s229-001-8027-1},
       URL = {https://doi.org/10.1007/s229-001-8027-1},
}

@book{Bru02,
    AUTHOR = {Bruinier, J. H.},
     TITLE = {Borcherds products on {O}(2, {$l$}) and {C}hern classes of
              {H}eegner divisors},
    SERIES = {Lecture Notes in Mathematics},
    VOLUME = {1780},
 PUBLISHER = {Springer-Verlag, Berlin},
      YEAR = {2002},
     PAGES = {viii+152},
      ISBN = {3-540-43320-1},
   MRCLASS = {11F55 (11F23 11F27 11G18)},
MRREVIEWER = {Rainer\ Schulze-Pillot},
       DOI = {10.1007/b83278},
       URL = {https://doi.org/10.1007/b83278},
}

@article {Bru02b,
    AUTHOR = {Bruinier, J. H.},
     TITLE = {On the rank of {P}icard groups of modular varieties attached
              to orthogonal groups},
   JOURNAL = {Compositio Math.},
  FJOURNAL = {Compositio Mathematica},
    VOLUME = {133},
      YEAR = {2002},
    NUMBER = {1},
     PAGES = {49--63},
      ISSN = {0010-437X,1570-5846},
   MRCLASS = {11F55 (11G18 14C22 14G35)},
MRREVIEWER = {Rainer\ Schulze-Pillot},
       DOI = {10.1023/A:1016357029843},
       URL = {https://doi.org/10.1023/A:1016357029843},
}

@article {Bru14,
    AUTHOR = {Bruinier, J. H.},
     TITLE = {On the converse theorem for {B}orcherds products},
   JOURNAL = {J. Algebra},
  FJOURNAL = {Journal of Algebra},
    VOLUME = {397},
      YEAR = {2014},
     PAGES = {315--342},
      ISSN = {0021-8693,1090-266X},
   MRCLASS = {11F55 (11G18 14G35)},
MRREVIEWER = {Min\ Ho\ Lee},
       DOI = {10.1016/j.jalgebra.2013.08.034},
       URL = {https://doi.org/10.1016/j.jalgebra.2013.08.034},
}

@article {BM19,
    AUTHOR = {Bruinier, J. H. and M\"{o}ller, M.},
     TITLE = {Cones of {H}eegner divisors},
   JOURNAL = {J. Algebraic Geom.},
  FJOURNAL = {Journal of Algebraic Geometry},
    VOLUME = {28},
      YEAR = {2019},
    NUMBER = {3},
     PAGES = {497--517},
      ISSN = {1056-3911},
   MRCLASS = {14J15 (11F55 14G35 14J10 14J28)},
MRREVIEWER = {Zhiyuan Li},
       DOI = {10.1090/jag/734},
       URL = {https://doi.org/10.1090/jag/734},
}

@misc{BZ24,
      title={Lefschetz decompositions of {K}udla-{M}illson theta functions}, 
      author={Bruinier, J. H. and Zuffetti, R.},
      year={2024},
      eprint={2406.19921},
      archivePrefix={arXiv},
      primaryClass={math.NT},
note={\url{https://arxiv.org/abs/2406.19921}}
}

@article {BDPP13,
    AUTHOR = {Boucksom, S. and Demailly, J.-P. and
              P\u{a}un, M. and Peternell, T.},
     TITLE = {The pseudo-effective cone of a compact {K}\"{a}hler manifold
              and varieties of negative {K}odaira dimension},
   JOURNAL = {J. Algebraic Geom.},
  FJOURNAL = {Journal of Algebraic Geometry},
    VOLUME = {22},
      YEAR = {2013},
    NUMBER = {2},
     PAGES = {201--248},
      ISSN = {1056-3911,1534-7486},
   MRCLASS = {14E99 (32J18 32L05 53C26)},
       DOI = {10.1090/S1056-3911-2012-00574-8},
       URL = {https://doi.org/10.1090/S1056-3911-2012-00574-8},
}

@article {DM19,
    AUTHOR = {Debarre, O. and Macr\`{i}, E.},
     TITLE = {On the period map for polarized hyperk\"{a}hler fourfolds},
   JOURNAL = {Int. Math. Res. Not. IMRN},
  FJOURNAL = {International Mathematics Research Notices. IMRN},
      YEAR = {2019},
    NUMBER = {22},
     PAGES = {6887--6923},
      ISSN = {1073-7928,1687-0247},
   MRCLASS = {14C34 (14J10 32G07 53C26)},
       DOI = {10.1093/imrn/rnx333},
       URL = {https://doi.org/10.1093/imrn/rnx333},
}

@incollection {DM22,
    AUTHOR = {Debarre, O. and Macr\`i, E.},
     TITLE = {Complete curves in the moduli space of polarized {K}3 surfaces
              and hyper-{K}\"ahler manifolds},
 BOOKTITLE = {Perspectives on four decades of algebraic geometry. {V}ol. 1.
              {I}n memory of {A}lberto {C}ollino},
    SERIES = {Progr. Math.},
    VOLUME = {351},
     PAGES = {207--232},
 PUBLISHER = {Birkh\"auser/Springer, Cham},
      YEAR = {2025},
      ISBN = {978-3-031-66229-4; 978-3-031-66230-0},
   MRCLASS = {14D20 (14F08 14J28 14J42)},
       DOI = {10.1007/978-3-031-66230-0\_8},
       URL = {https://doi.org/10.1007/978-3-031-66230-0_8},
}

@article {DV10,
    AUTHOR = {Debarre, O. and Voisin, C.},
     TITLE = {Hyper-K\"{a}hler fourfolds and Grassmann geometry},
   JOURNAL = {J. Reine Angew. Math.},
    VOLUME = {649},
      YEAR = {2010},
     PAGES = {63--87},
}

@unpublished{weilrep,
  author = {Williams, B.},
  title = {{W}eil{R}ep},
  note = {Available at \url{https://github.com/btw-47/weilrep}},
}

@unpublished{heegner_cones,
  author = {Williams, B.},
  title = {{H}eegner {C}ones},
year=2024,
  note = {Available at \url{https://github.com/btw-47/heegner-cones}},
}

@article {FV18,
    AUTHOR = {Farkas, G. and Verra, A.},
     TITLE = {The universal {$K3$} surface of genus {$14$} via cubic fourfolds},
   JOURNAL = {J. Math. Pures Appl. (9)},
  FJOURNAL = {Journal de Math\'{e}matiques Pures et Appliqu\'{e}es. Neuvi\`eme S\'{e}rie},
    VOLUME = {111},
      YEAR = {2018},
     PAGES = {1--20},
      ISSN = {0021-7824},
   MRCLASS = {14J35 (14J10 14J28 14M20)},
       DOI = {10.1016/j.matpur.2017.07.014},
       URL = {https://doi.org/10.1016/j.matpur.2017.07.014},
}

@article {FV21,
	AUTHOR = {Farkas, G. and Verra, A.},
	TITLE = {The unirationality of the moduli space of {$K3$} surfaces of genus {$22$}},
	JOURNAL = {Math. Ann.},
	YEAR = {2021},
    VOLUME = {380},
    PAGES = {953--973},
}

@article {GHS09,
    AUTHOR = {Gritsenko, V. A. and Hulek, K. and Sankaran, G. K.},
     TITLE = {Abelianisation of orthogonal groups and the fundamental group
              of modular varieties},
   JOURNAL = {J. Algebra},
  FJOURNAL = {Journal of Algebra},
    VOLUME = {322},
      YEAR = {2009},
    NUMBER = {2},
     PAGES = {463--478},
      ISSN = {0021-8693,1090-266X},
   MRCLASS = {22E40 (11E45 14J28)},
       DOI = {10.1016/j.jalgebra.2009.01.037},
       URL = {https://doi.org/10.1016/j.jalgebra.2009.01.037},
}

@article {GHS07,
    AUTHOR = {Gritsenko, V. A. and Hulek, K. and Sankaran, G. K.},
     TITLE = {The {K}odaira dimension of the moduli of {$K3$} surfaces},
   JOURNAL = {Invent. Math.},
  FJOURNAL = {Inventiones Mathematicae},
    VOLUME = {169},
      YEAR = {2007},
    NUMBER = {3},
     PAGES = {519--567},
      ISSN = {0020-9910},
   MRCLASS = {14J28 (14J10 32M15 32N15)},
MRREVIEWER = {I. Dolgachev},
       DOI = {10.1007/s00222-007-0054-1},
       URL = {https://doi.org/10.1007/s00222-007-0054-1},
}

@book {GHS13,
    AUTHOR = {Gritsenko, V. A. and Hulek, K. and Sankaran, G. K.},
    TITLE = {Moduli of {K}3 surfaces and irreducible symplectic manifolds, {H}andbook of {M}oduli. {V}ol. {I}},
    CONTRIBUTION = {Moduli of {K}3 surfaces and irreducible symplectic manifolds},
    SERIES = {{F}arkas, {G}. and {M}orrison, {I}. eds., Adv. Lect. Math. (ALM)},
    VOLUME = {24},
     PAGES = {459--526},
 PUBLISHER = {Int. Press, Somerville, MA},
      YEAR = {2013},
   MRCLASS = {14J10 (14J28 53C26)},
MRREVIEWER = {Aristides I. Kontogeorgis},
}

@article{IR01,
    author = {Iliev, A. and Ranestad, K.},
    title = {{$K3$} surfaces of genus {$8$} and varieties of sums of powers of cubic fourfolds},
    journal = {Trans. Amer. Math. Soc.},
    volume = {353}, 
    pages = {1455--1468},
    year = {2001},
}

@article{IR07,
    author = {Iliev, A. and Ranestad, K.},
    title = {Addendum to {$K3$} surfaces of genus {$8$} and varieties of sums of powers of cubic fourfolds},
    journal = {C. R. Acad. Bulgare Sci.},
    volume = {60}, 
    pages = {1265--1270},
    year = {2007},
}

@article {Kud03,
    AUTHOR = {Kudla, S. S.},
     TITLE = {Integrals of {B}orcherds forms},
   JOURNAL = {Compositio Math.},
  FJOURNAL = {Compositio Mathematica},
    VOLUME = {137},
      YEAR = {2003},
    NUMBER = {3},
     PAGES = {293--349},
      ISSN = {0010-437X,1570-5846},
   MRCLASS = {11F30 (11F55 11G18 11M36 14G35)},
       DOI = {10.1023/A:1024127100993},
       URL = {https://doi.org/10.1023/A:1024127100993},
}

@article {Landau2000,
    AUTHOR = {Landau, L. J.},
     TITLE = {Bessel functions: monotonicity and bounds},
   JOURNAL = {J. London Math. Soc. (2)},
  FJOURNAL = {Journal of the London Mathematical Society. Second Series},
    VOLUME = {61},
      YEAR = {2000},
    NUMBER = {1},
     PAGES = {197--215},
}

@article {Loo03,
    AUTHOR = {Looijenga, E.},
     TITLE = {Compactifications defined by arrangements. {II}. {L}ocally
              symmetric varieties of type {IV}},
   JOURNAL = {Duke Math. J.},
  FJOURNAL = {Duke Mathematical Journal},
    VOLUME = {119},
      YEAR = {2003},
    NUMBER = {3},
     PAGES = {527--588},
      ISSN = {0012-7094,1547-7398},
   MRCLASS = {14J15 (14J27 32S22)},
MRREVIEWER = {Dan\ Avritzer},
       DOI = {10.1215/S0012-7094-03-11933-X},
       URL = {https://doi.org/10.1215/S0012-7094-03-11933-X},
}

@incollection {Mar11,
    AUTHOR = {Markman, E.},
     TITLE = {A survey of {T}orelli and monodromy results for
              holomorphic-symplectic varieties},
 BOOKTITLE = {Complex and differential geometry},
    SERIES = {Springer Proc. Math.},
    VOLUME = {8},
     PAGES = {257--322},
 PUBLISHER = {Springer, Heidelberg},
      YEAR = {2011},
      ISBN = {978-3-642-20299-5},
   MRCLASS = {14D07 (14J15 32G20 53C26)},
       DOI = {10.1007/978-3-642-20300-8\_15},
       URL = {https://doi.org/10.1007/978-3-642-20300-8_15},
}

@article {Mar23,
    AUTHOR = {Markman, E.},
     TITLE = {The monodromy of generalized {K}ummer varieties and algebraic
              cycles on their intermediate {J}acobians},
   JOURNAL = {J. Eur. Math. Soc. (JEMS)},
  FJOURNAL = {Journal of the European Mathematical Society (JEMS)},
    VOLUME = {25},
      YEAR = {2023},
     PAGES = {231--321},
      ISSN = {1435-9855},
   MRCLASS = {14C25 (14F08 14J42)},
       DOI = {10.4171/jems/1199},
       URL = {https://doi.org/10.4171/jems/1199},
}

@incollection {MP13,
    AUTHOR = {Maulik, D. and Pandharipande, R.},
     TITLE = {Gromov-{W}itten theory and {N}oether-{L}efschetz theory},
 BOOKTITLE = {A celebration of algebraic geometry},
    SERIES = {Clay Math. Proc.},
    VOLUME = {18},
     PAGES = {469--507},
 PUBLISHER = {Amer. Math. Soc., Providence, RI},
      YEAR = {2013},
   MRCLASS = {14N35 (14J10 14J28)},
MRREVIEWER = {Satoshi Minabe},
}

@article {McG03,
    AUTHOR = {McGraw, W. J.},
     TITLE = {The rationality of vector valued modular forms associated with
              the {W}eil representation},
   JOURNAL = {Math. Ann.},
  FJOURNAL = {Mathematische Annalen},
    VOLUME = {326},
      YEAR = {2003},
    NUMBER = {1},
     PAGES = {105--122},
      ISSN = {0025-5831,1432-1807},
   MRCLASS = {11F30 (11F27)},
MRREVIEWER = {Jan\ Hendrik\ Bruinier},
       DOI = {10.1007/s00208-003-0413-1},
       URL = {https://doi.org/10.1007/s00208-003-0413-1},
}

@article {MM86,
    AUTHOR = {Miyaoka, Y. and Mori, S.},
     TITLE = {A numerical criterion for uniruledness},
   JOURNAL = {Ann. of Math. (2)},
  FJOURNAL = {Annals of Mathematics. Second Series},
    VOLUME = {124},
      YEAR = {1986},
    NUMBER = {1},
     PAGES = {65--69},
      ISSN = {0003-486X,1939-8980},
   MRCLASS = {14J40},
       DOI = {10.2307/1971387},
       URL = {https://doi.org/10.2307/1971387},
}

@article {Mon16,
    AUTHOR = {Mongardi, G.},
     TITLE = {On the monodromy of irreducible symplectic manifolds},
   JOURNAL = {Algebr. Geom.},
  FJOURNAL = {Algebraic Geometry},
    VOLUME = {3},
      YEAR = {2016},
     PAGES = {385--391},
      ISSN = {2313-1691},
   MRCLASS = {14F05 (14D05)},
MRREVIEWER = {Ziyu Zhang},
       DOI = {10.14231/AG-2016-017},
       URL = {https://doi.org/10.14231/AG-2016-017},
}

@article {MR21,
    AUTHOR = {Mongardi, G. and Rapagnetta, A.},
     TITLE = {Monodromy and birational geometry of {O}'{G}rady's sixfolds},
   JOURNAL = {J. Math. Pures Appl. (9)},
  FJOURNAL = {Journal de Math\'{e}matiques Pures et Appliqu\'{e}es. Neuvi\`eme S\'{e}rie},
    VOLUME = {146},
      YEAR = {2021},
     PAGES = {31--68},
      ISSN = {0021-7824},
   MRCLASS = {14D05 (14E30 14J40)},
       DOI = {10.1016/j.matpur.2020.12.006},
       URL = {https://doi.org/10.1016/j.matpur.2020.12.006},
}

@article {MTW2018,
    AUTHOR = {Mongardi, G. and Tari, K. and Wandel, M.},
     TITLE = {Prime order automorphisms of generalised {K}ummer fourfolds},
   JOURNAL = {Manuscripta Math.},
  FJOURNAL = {Manuscripta Mathematica},
    VOLUME = {155},
      YEAR = {2018},
    NUMBER = {3-4},
     PAGES = {449--469},
      ISSN = {0025-2611,1432-1785},
   MRCLASS = {14J50 (14J35 53C26)},
       DOI = {10.1007/s00229-017-0942-7},
       URL = {https://doi.org/10.1007/s00229-017-0942-7},
}

@incollection {Muk88,
    AUTHOR = {Mukai, S.},
     TITLE = {Curves, {$K3$} surfaces and {F}ano {$3$}-folds of genus {$\leq
              10$}},
 BOOKTITLE = {Algebraic geometry and commutative algebra, {V}ol. {I}},
     PAGES = {357--377},
 PUBLISHER = {Kinokuniya, Tokyo},
      YEAR = {1988},
   MRCLASS = {14J10 (14J28 14J30 32G13 32J15 32M10)},
MRREVIEWER = {Peter Nielsen},
}

@incollection {Muk92,
    AUTHOR = {Mukai, S.},
     TITLE = {Polarized {$K3$} surfaces of genus {$18$} and {$20$}},
 BOOKTITLE = {Complex projective geometry, {L}ondon {M}ath. {S}oc. {L}ecture {N}ote {S}er., {C}ambridge {U}niv. {P}ress, {C}ambridge},
    SERIES = {London Math. Soc. Lecture Note Ser.},
    VOLUME = {179},
     PAGES = {264--276},
 PUBLISHER = {Cambridge Univ. Press, Cambridge},
      YEAR = {1992},
   MRCLASS = {14J28 (14J10 32J15)},
MRREVIEWER = {Shigeyuki Kondo},
       DOI = {10.1017/CBO9780511662652.019},
       URL = {https://doi.org/10.1017/CBO9780511662652.019},
}

@incollection {Muk06,
    AUTHOR = {Mukai, S.},
     TITLE = {Polarized {$K3$} surfaces of genus thirteen},
 BOOKTITLE = {Moduli spaces and arithmetic geometry, {A}dv. {S}tud. {P}ure {M}ath., {M}ath. {S}oc. {J}apan, {T}okyo},
    SERIES = {Adv. Stud. Pure Math.},
    VOLUME = {45},
     PAGES = {315--326},
 PUBLISHER = {Math. Soc. Japan, Tokyo},
      YEAR = {2006},
   MRCLASS = {14J28},
MRREVIEWER = {Alex Degtyarev},
       DOI = {10.2969/aspm/04510315},
       URL = {https://doi.org/10.2969/aspm/04510315},
}

@article {Muk10,
    AUTHOR = {Mukai, S.},
     TITLE = {Curves and symmetric spaces, {II}},
   JOURNAL = {Ann. of Math. (2)},
  FJOURNAL = {Annals of Mathematics. Second Series},
    VOLUME = {172},
      YEAR = {2010},
    NUMBER = {3},
     PAGES = {1539--1558},
      ISSN = {0003-486X},
   MRCLASS = {14H45 (14C20 14H51 14M15)},
MRREVIEWER = {Raquel Mallavibarrena},
       DOI = {10.4007/annals.2010.172.1539},
       URL = {https://doi.org/10.4007/annals.2010.172.1539},
}

@incollection {Muk16,
    AUTHOR = {Mukai, S.},
     TITLE = {K3 surfaces of genus sixteen},
 BOOKTITLE = {Minimal models and extremal rays, {A}dv. {S}tud. {P}ure {M}ath., {M}ath. {S}oc. {J}apan, {T}okyo},
    SERIES = {Adv. Stud. Pure Math.},
    VOLUME = {70},
     PAGES = {379--396},
 PUBLISHER = {Math. Soc. Japan, [Tokyo]},
      YEAR = {2016},
   MRCLASS = {14J28 (14M20)},
MRREVIEWER = {Zhiyu Tian},
       DOI = {10.2969/aspm/07010379},
       URL = {https://doi.org/10.2969/aspm/07010379},
}

@misc{NIST,
      title={{NIST} {D}igital {L}ibrary of {M}athematical {F}unctions}, 
note={\url{https://dlmf.nist.gov/}}
}

@article {OGr03,
    AUTHOR = {O'Grady, K. G.},
     TITLE = {A new six-dimensional irreducible symplectic variety},
   JOURNAL = {J. Algebraic Geom.},
  FJOURNAL = {Journal of Algebraic Geometry},
    VOLUME = {12},
      YEAR = {2003},
     PAGES = {435--505},
      ISSN = {1056-3911},
   MRCLASS = {14D20 (32J18 32J27)},
MRREVIEWER = {Zhenbo Qin},
       DOI = {10.1090/S1056-3911-03-00323-0},
       URL = {https://doi.org/10.1090/S1056-3911-03-00323-0},
}

@article {OG06,
    AUTHOR = {O'Grady, K. G.},
     TITLE = {Irreducible symplectic 4-folds and {Eisenbud--Popescu--Walter} sextics},
   JOURNAL = {Duke Math. J.},
    VOLUME = {134},
      YEAR = {2006},
     PAGES = {99--137},
}

@article {OG15,
    AUTHOR = {O'Grady, K. G.},
     TITLE = {Periods of double {EPW}-sextics},
   JOURNAL = {Math. Z.},
  FJOURNAL = {Mathematische Zeitschrift},
    VOLUME = {280},
      YEAR = {2015},
    NUMBER = {1-2},
     PAGES = {485--524},
      ISSN = {0025-5874,1432-1823},
   MRCLASS = {14J35 (14E05 14L24 53C26)},
       DOI = {10.1007/s00209-015-1434-7},
       URL = {https://doi.org/10.1007/s00209-015-1434-7},
}

@article {OG16,
    AUTHOR = {O'Grady, K. G.},
     TITLE = {Moduli of double {EPW}-sextics},
   JOURNAL = {Mem. Amer. Math. Soc.},
  FJOURNAL = {Memoirs of the American Mathematical Society},
    VOLUME = {240},
      YEAR = {2016},
    NUMBER = {1136},
     PAGES = {ix+172},
      ISSN = {0065-9266,1947-6221},
      ISBN = {978-1-4704-1696-6; 978-1-4704-2824-2},
   MRCLASS = {14J10 (14L24)},
       DOI = {10.1090/memo/1136},
       URL = {https://doi.org/10.1090/memo/1136},
}

@article{Ono22,
    AUTHOR={Onorati, C.},
    TITLE={Connected components of moduli spaces of irreducible holomorphic symplectic manifolds of Kummer type},
    JOURNAL={Rend. Mat. Appl.},
    VOLUME={43}, 
    NUMBER={7},
    YEAR={2022},
    PAGES={251--266},
}

@phdthesis{Pet15,
    title    = {Modular forms on the moduli space of polarised {K}3 surfaces},
    school   = {University of Amsterdam},
    author   = {Peterson, A.},
    year     = {2015},
    note={University of Amsterdam, Available at \url{http://hdl.handle.net/ 11245/2.162072}},
}

@article {Rap08,
    AUTHOR = {Rapagnetta, A.},
     TITLE = {On the {B}eauville form of the known irreducible symplectic
              varieties},
   JOURNAL = {Math. Ann.},
  FJOURNAL = {Mathematische Annalen},
    VOLUME = {340},
      YEAR = {2008},
     PAGES = {77--95},
      ISSN = {0025-5831},
   MRCLASS = {14D20 (14D06 14J60 32Q15)},
MRREVIEWER = {Zhenbo Qin},
       DOI = {10.1007/s00208-007-0139-6},
       URL = {https://doi.org/10.1007/s00208-007-0139-6},
}

@article {S-PY18,
    AUTHOR = {Schulze-Pillot, R. and Yenirce, A.},
     TITLE = {Petersson products of bases of spaces of cusp forms and
              estimates for {F}ourier coefficients},
   JOURNAL = {Int. J. Number Theory},
  FJOURNAL = {International Journal of Number Theory},
    VOLUME = {14},
      YEAR = {2018},
    NUMBER = {8},
     PAGES = {2277--2290},
      ISSN = {1793-0421,1793-7310},
   MRCLASS = {11F30 (11F11 11F25)},
       DOI = {10.1142/S1793042118501385},
       URL = {https://doi.org/10.1142/S1793042118501385},
}

@article {Son23,
    AUTHOR = {Song, J.},
     TITLE = {On the image of the period map for polarized hyperk\"{a}hler
              manifolds},
   JOURNAL = {Int. Math. Res. Not. IMRN},
  FJOURNAL = {International Mathematics Research Notices. IMRN},
      YEAR = {2023},
    NUMBER = {13},
     PAGES = {11404--11431},
      ISSN = {1073-7928,1687-0247},
   MRCLASS = {14J42 (14D20 14J10)},
       DOI = {10.1093/imrn/rnac155},
       URL = {https://doi.org/10.1093/imrn/rnac155},
}

@article {Ver13,
    AUTHOR = {Verbitsky, M.},
     TITLE = {Mapping class group and a global {T}orelli theorem for
              hyperk\"{a}hler manifolds},
      NOTE = {Appendix A by Eyal Markman},
   JOURNAL = {Duke Math. J.},
  FJOURNAL = {Duke Mathematical Journal},
    VOLUME = {162},
      YEAR = {2013},
    NUMBER = {15},
     PAGES = {2929--2986},
      ISSN = {0012-7094,1547-7398},
   MRCLASS = {53C26 (32G13)},
MRREVIEWER = {Graeme\ Wilkin},
       DOI = {10.1215/00127094-2382680},
       URL = {https://doi.org/10.1215/00127094-2382680},
}

@article {Voi86,
    AUTHOR = {Voisin, C.},
     TITLE = {Th\'eor\`eme de {T}orelli pour les cubiques de {${\bf P}^5$}},
   JOURNAL = {Invent. Math.},
  FJOURNAL = {Inventiones Mathematicae},
    VOLUME = {86},
      YEAR = {1986},
    NUMBER = {3},
     PAGES = {577--601},
      ISSN = {0020-9910,1432-1297},
   MRCLASS = {14C30 (14J35)},
       DOI = {10.1007/BF01389270},
       URL = {https://doi.org/10.1007/BF01389270},
}

@article {Wil18,
    AUTHOR = {Williams, B.},
     TITLE = {Poincar\'{e} square series for the {W}eil representation},
   JOURNAL = {Ramanujan J.},
  FJOURNAL = {Ramanujan Journal. An International Journal Devoted to the
              Areas of Mathematics Influenced by Ramanujan},
    VOLUME = {47},
      YEAR = {2018},
    NUMBER = {3},
     PAGES = {605--650},
}

@article {WW21,
    AUTHOR = {Wang, H. and Williams, B.},
     TITLE = {Modular forms with poles on hyperplane arrangements},
   JOURNAL = {Algebr. Geom.},
  FJOURNAL = {Algebraic Geometry},
    VOLUME = {11},
      YEAR = {2024},
    NUMBER = {4},
     PAGES = {506--568},
      ISSN = {2313-1691,2214-2584},
   MRCLASS = {11F55 (11F46 11F50 17B22 32S22)},
MRREVIEWER = {O.\ V.\ Shvartsman},
}

@incollection {Yos16,
    AUTHOR = {Yoshioka, K.},
     TITLE = {Bridgeland's stability and the positive cone of the moduli
              spaces of stable objects on an abelian surface},
 BOOKTITLE = {Development of moduli theory---{K}yoto 2013},
    SERIES = {Adv. Stud. Pure Math.},
    VOLUME = {69},
     PAGES = {473--537},
 PUBLISHER = {Math. Soc. Japan, [Tokyo]},
      YEAR = {2016},
      ISBN = {978-4-86497-032-7},
   MRCLASS = {14D20},
MRREVIEWER = {Ziyu\ Zhang},
       DOI = {10.2969/aspm/06910473},
       URL = {https://doi.org/10.2969/aspm/06910473},
}
\bibliographystyle{alpha}


\addresseshere

\section{Appendix}

 For $X=\mathcal{D}\big/\widetilde{\rm{O}}^+\left(\Lambda_h\right)$, we record here details about ${\rm Eff}^{NL}(X)$ including the set of minimal generating rays, the number of these rays, and the dimension of the $\mathbb{Q}$-vector space ${\rm{Pic}}_\mathbb{Q}\left(X\right)$.


\begin{longtable}{|c|l|c|c|}
\caption{The NL cone of $\mathcal{F}_{2d}$.}
\label{sec6:table:NLK3}\\
\hline
$d$ & minimal generating rays of ${\rm{Eff}}^{NL}\left(\mathcal{F}_{2d}\right)$& $\#$ rays& dim \\
\hline
$1$&$P_{-1, 0}, P_{-\frac{1}{4}, \ell_*}$&$2$&$2$\\

\hline
$2$&$P_{-1, 0}, P_{-\frac{1}{8}, \ell_*}, P_{-\frac{1}{2}, 2\ell_*}$&$3$&$3$\\

\hline
$3$&$P_{-1, 0}, P_{-\frac{1}{12}, \ell_*}, P_{-\frac{1}{3}, 2\ell_*}, P_{-\frac{3}{4}, 3\ell_*}$&$4$&$4$\\

\hline
$4$&$P_{-1, 0}, P_{-\frac{1}{16}, \ell_*}, P_{-\frac{1}{4}, 2\ell_*}, P_{-\frac{9}{16}, 3\ell_*}, P_{-1, 4\ell_*}$&$5$&$4$\\

\hline
$5$&$P_{-1, 0}, P_{-\frac{1}{20}, \ell_*}, P_{-\frac{1}{5}, 2\ell_*}, P_{-\frac{9}{20}, 3\ell_*}, P_{-\frac{4}{5}, 4\ell_*}, P_{-\frac{1}{4}, 5\ell_*}$&$6$&$6$\\

\hline
$6$&$P_{-1, 0}, P_{-\frac{1}{24}, \ell_*}, P_{-\frac{1}{6}, 2\ell_*}, P_{-\frac{3}{8}, 3\ell_*}, P_{-\frac{2}{3}, 4\ell_*}, P_{-\frac{1}{24}, 5\ell_*},P_{-\frac{1}{2}, 6\ell_*}$&$7$&$7$\\
\hline
$7$&$P_{-1, 0}, P_{-\frac{1}{28}, \ell_*}, P_{-\frac{1}{7}, 2\ell_*}, P_{-\frac{9}{28}, 3\ell_*}, P_{-\frac{4}{7}, 4\ell_*}, P_{-\frac{25}{28}, 5\ell_*},P_{-\frac{2}{7}, 6\ell_*}, P_{-\frac{3}{4}, 7\ell_*}$&$8$&$7$\\
\hline
$8$&$P_{-1, 0}, P_{-\frac{1}{32}, \ell_*}, P_{-\frac{33}{32}, \ell_*}, P_{-\frac{1}{8}, 2\ell_*}, P_{-\frac{9}{32}, 3\ell_*}, P_{-\frac{1}{2}, 4\ell_*},P_{-\frac{25}{32}, 5\ell_*}, P_{-\frac{1}{8}, 6\ell_*}, $&$10$&$8$\\
&$P_{-\frac{17}{32}, 7\ell_*}, P_{-1, 8\ell_*}$&&\\
\hline
$9$&$P_{-1, 0}, P_{-\frac{1}{36}, \ell_*}, P_{-\frac{37}{36}, \ell_*}, P_{-\frac{1}{9}, 2\ell_*}, P_{-\frac{10}{9}, 2\ell_*}, P_{-\frac{1}{4}, 3\ell_*},P_{-\frac{4}{9}, 4\ell_*}, P_{-\frac{25}{36}, 5\ell_*},$&$13$&$9$\\
&$ P_{-1, 6\ell_*}, P_{-\frac{13}{36}, 7\ell_*}, P_{-\frac{7}{9}, 8\ell_*},P_{-\frac{1}{4}, 9\ell_*},P_{-\frac{5}{4}, 9\ell_*}$&&\\
\hline
$10$&$P_{-1, 0}, P_{-\frac{1}{40}, \ell_*}, P_{-\frac{1}{10}, 2\ell_*}, P_{-\frac{9}{40}, 3\ell_*}, P_{-\frac{2}{5}, 4\ell_*}, P_{-\frac{5}{8}, 5\ell_*},P_{-\frac{9}{10}, 6\ell_*}, P_{-\frac{9}{40}, 7\ell_*},$&$11$&$10$\\
&$ P_{-\frac{3}{5}, 8\ell_*}, P_{-\frac{1}{40}, 9\ell_*}, P_{-\frac{1}{2}, 10\ell_*}$&&\\
\hline
$11$&$P_{-1, 0}, P_{-\frac{1}{44}, \ell_*}, P_{-\frac{45}{44}, \ell_*}, P_{-\frac{1}{11}, 2\ell_*}, P_{-\frac{12}{11}, 2\ell_*}, P_{-\frac{9}{44}, 3\ell_*},P_{-\frac{53}{44}, 3\ell_*}, P_{-\frac{4}{11}, 4\ell_*},$&$16$&$11$\\
&$ P_{-\frac{25}{44}, 5\ell_*}, P_{-\frac{9}{11}, 6\ell_*}, P_{-\frac{5}{44}, 7\ell_*},P_{-\frac{49}{44}, 7\ell_*}, P_{-\frac{5}{11}, 8\ell_*}, P_{-\frac{37}{44}, 9\ell_*}, P_{-\frac{3}{11}, 10\ell_*},$&&\\
&$ P_{-\frac{3}{4}, 11\ell_*}$&&\\
\hline
$12$&$P_{-1, 0}, P_{-\frac{1}{48}, \ell_*}, P_{-\frac{49}{48}, \ell_*}, P_{-\frac{1}{12}, 2\ell_*}, P_{-\frac{3}{16}, 3\ell_*}, P_{-\frac{1}{3}, 4\ell_*},P_{-\frac{25}{48}, 5\ell_*}, P_{-\frac{3}{4}, 6\ell_*},$&$15$&$12$\\
&$ P_{-\frac{1}{48}, 7\ell_*}, P_{-\frac{49}{48}, 7\ell_*}, P_{-\frac{1}{3}, 8\ell_*},P_{-\frac{11}{16}, 9\ell_*}, P_{-\frac{1}{12}, 10\ell_*}, P_{-\frac{25}{48}, 11\ell_*}, P_{-1, 12\ell_*}$&&\\
\hline
$13$&$P_{-1, 0}, P_{-\frac{1}{52}, \ell_*}, P_{-\frac{53}{52}, \ell_*}, P_{-\frac{1}{13}, 2\ell_*}, P_{-\frac{9}{52}, 3\ell_*}, P_{-\frac{4}{13}, 4\ell_*},P_{-\frac{25}{52}, 5\ell_*}, P_{-\frac{9}{13}, 6\ell_*},$&$16$&$12$\\
&$ P_{-\frac{49}{52}, 7\ell_*}, P_{-\frac{3}{13}, 8\ell_*}, P_{-\frac{29}{52}, 9\ell_*},P_{-\frac{12}{13}, 10\ell_*}, P_{-\frac{17}{52}, 11\ell_*}, P_{-\frac{10}{13}, 12\ell_*}, P_{-\frac{1}{4}, 13\ell_*},$&&\\
&$ P_{-\frac{5}{4}, 13\ell_*}$&&\\
\hline
$14$&$P_{-1, 0}, P_{-\frac{1}{56}, \ell_*}, P_{-\frac{57}{56}, \ell_*}, P_{-\frac{1}{14}, 2\ell_*}, P_{-\frac{9}{56}, 3\ell_*}, P_{-\frac{2}{7}, 4\ell_*},P_{-\frac{25}{56}, 5\ell_*}, P_{-\frac{9}{14}, 6\ell_*},$&$18$&$14$\\
&$ P_{-\frac{7}{8}, 7\ell_*}, P_{-\frac{1}{7}, 8\ell_*}, P_{-\frac{25}{56}, 9\ell_*},P_{-\frac{11}{14}, 10\ell_*}, P_{-\frac{9}{56}, 11\ell_*}, P_{-\frac{4}{7}, 12\ell_*}, P_{-\frac{1}{56}, 13\ell_*},$&&\\
&$P_{-\frac{57}{56}, 13\ell_*}, P_{-\frac{1}{2}, 14\ell_*}, P_{-\frac{3}{2}, 14\ell_*}$&&\\
\hline
$15$&$P_{-1, 0}, P_{-\frac{1}{60}, \ell_*}, P_{-\frac{61}{60}, \ell_*}, P_{-\frac{1}{15}, 2\ell_*}, P_{-\frac{16}{15}, 2\ell_*}, P_{-\frac{3}{20}, 3\ell_*},P_{-\frac{4}{15}, 4\ell_*}, P_{-\frac{5}{12}, 5\ell_*},$&$20$&$15$\\
&$ P_{-\frac{3}{5}, 6\ell_*}, P_{-\frac{49}{60}, 7\ell_*}, P_{-\frac{1}{15}, 8\ell_*},P_{-\frac{16}{15}, 8\ell_*}, P_{-\frac{7}{20}, 9\ell_*}, P_{-\frac{2}{3}, 10\ell_*}, P_{-\frac{1}{60}, 11\ell_*},$&&\\
&$ P_{-\frac{61}{60}, 11\ell_*},P_{-\frac{2}{5}, 12\ell_*}, P_{-\frac{49}{60}, 13\ell_*}, P_{-\frac{4}{15}, 14\ell_*}, P_{-\frac{3}{4}, 15\ell_*}$&&\\
\hline
$16$&$P_{-1, 0}, P_{-\frac{1}{64}, \ell_*}, P_{-\frac{65}{64}, \ell_*}, P_{-\frac{1}{16}, 2\ell_*}, P_{-\frac{17}{16}, 2\ell_*}, P_{-\frac{9}{64}, 3\ell_*},P_{-\frac{1}{4}, 4\ell_*}, P_{-\frac{25}{64}, 5\ell_*},$&$20$&$14$\\
&$ P_{-\frac{9}{16}, 6\ell_*}, P_{-\frac{49}{64}, 7\ell_*}, P_{-1, 8\ell_*},P_{-\frac{17}{64}, 9\ell_*}, P_{-\frac{9}{16}, 10\ell_*}, P_{-\frac{57}{64}, 11\ell_*}, P_{-\frac{1}{4}, 12\ell_*},$&&\\
&$ P_{-\frac{41}{64}, 13\ell_*},P_{-\frac{1}{16}, 14\ell_*}, P_{-\frac{17}{16}, 14\ell_*}, P_{-\frac{33}{64}, 15\ell_*}, P_{-1, 16\ell_*}$&&\\
\hline
$17$&$P_{-1, 0}, P_{-\frac{1}{68}, \ell_*}, P_{-\frac{69}{68}, \ell_*}, P_{-\frac{1}{17}, 2\ell_*}, P_{-\frac{18}{17}, 2\ell_*}, P_{-\frac{9}{68}, 3\ell_*},P_{-\frac{77}{68}, 3\ell_*}, P_{-\frac{4}{17}, 4\ell_*},$&$23$&$16$\\
&$ P_{-\frac{25}{68}, 5\ell_*}, P_{-\frac{9}{17}, 6\ell_*}, P_{-\frac{49}{68}, 7\ell_*},P_{-\frac{16}{17}, 8\ell_*}, P_{-\frac{13}{68}, 9\ell_*}, P_{-\frac{8}{17}, 10\ell_*}, P_{-\frac{53}{68}, 11\ell_*},$&&\\
&$ P_{-\frac{2}{17}, 12\ell_*},P_{-\frac{19}{17}, 12\ell_*}, P_{-\frac{33}{68}, 13\ell_*}, P_{-\frac{15}{17}, 14\ell_*}, P_{-\frac{21}{68}, 15\ell_*}, P_{-\frac{13}{17}, 16\ell_*},P_{-\frac{1}{4}, 17\ell_*},$&&\\
&$ P_{-\frac{5}{4}, 17\ell_*}$&&\\
\hline
$18$&$P_{-1, 0}, P_{-\frac{1}{72}, \ell_*}, P_{-\frac{73}{72}, \ell_*}, P_{-\frac{1}{18}, 2\ell_*}, P_{-\frac{19}{18}, 2\ell_*}, P_{-\frac{1}{8}, 3\ell_*},P_{-\frac{2}{9}, 4\ell_*}, P_{-\frac{11}{9}, 4\ell_*},$&$25$&$17$\\
&$ P_{-\frac{25}{72}, 5\ell_*}, P_{-\frac{1}{2}, 6\ell_*}, P_{-\frac{49}{72}, 7\ell_*},P_{-\frac{8}{9}, 8\ell_*}, P_{-\frac{1}{8}, 9\ell_*}, P_{-\frac{9}{8}, 9\ell_*}, P_{-\frac{7}{18}, 10\ell_*},$&&\\
&$ P_{-\frac{49}{72}, 11\ell_*},P_{-1, 12\ell_*}, P_{-\frac{25}{72}, 13\ell_*}, P_{-\frac{13}{18}, 14\ell_*}, P_{-\frac{1}{8}, 15\ell_*}, P_{-\frac{5}{9}, 16\ell_*},P_{-\frac{1}{72}, 17\ell_*},$&&\\
&$ P_{-\frac{73}{72}, 17\ell_*}, P_{-\frac{1}{2}, 18\ell_*}, P_{-\frac{3}{2}, 18\ell_*}$&&\\
\hline
$19$&$P_{-1, 0}, P_{-\frac{1}{76}, \ell_*}, P_{-\frac{77}{76}, \ell_*}, P_{-\frac{1}{19}, 2\ell_*}, P_{-\frac{20}{19}, 2\ell_*}, P_{-\frac{9}{76}, 3\ell_*},P_{-\frac{85}{76}, 3\ell_*}, P_{-\frac{4}{19}, 4\ell_*},$&$27$&$17$\\
&$ P_{-\frac{23}{19}, 4\ell_*}, P_{-\frac{25}{76}, 5\ell_*}, P_{-\frac{9}{19}, 6\ell_*},P_{-\frac{49}{76}, 7\ell_*}, P_{-\frac{16}{19}, 8\ell_*}, P_{-\frac{5}{76}, 9\ell_*}, P_{-\frac{81}{76}, 9\ell_*},$&&\\
&$ P_{-\frac{6}{19}, 10\ell_*},P_{-\frac{45}{76}, 11\ell_*}, P_{-\frac{17}{19}, 12\ell_*}, P_{-\frac{17}{76}, 13\ell_*}, P_{-\frac{93}{76}, 13\ell_*}, P_{-\frac{11}{19}, 14\ell_*},P_{-\frac{73}{76}, 15\ell_*},$&&\\
&$ P_{-\frac{7}{19}, 16\ell_*}, P_{-\frac{61}{76}, 17\ell_*}, P_{-\frac{5}{19}, 18\ell_*},P_{-\frac{24}{19}, 18\ell_*}, P_{-\frac{3}{4}, 19\ell_*}$&&\\
\hline
$20$&$P_{-1, 0}, P_{-2, 0}, P_{-\frac{1}{80}, \ell_*}, P_{-\frac{81}{80}, \ell_*}, P_{-\frac{1}{20}, 2\ell_*}, P_{-\frac{21}{20}, 2\ell_*},P_{-\frac{9}{80}, 3\ell_*}, P_{-\frac{89}{80}, 3\ell_*},$&$28$&$19$\\
&$ P_{-\frac{1}{5}, 4\ell_*}, P_{-\frac{5}{16}, 5\ell_*}, P_{-\frac{9}{20}, 6\ell_*}, P_{-\frac{49}{80}, 7\ell_*}, P_{-\frac{4}{5}, 8\ell_*}, P_{-\frac{1}{80}, 9\ell_*}, P_{-\frac{81}{80}, 9\ell_*},$&&\\
&$ P_{-\frac{1}{4}, 10\ell_*},P_{-\frac{41}{80}, 11\ell_*}, P_{-\frac{4}{5}, 12\ell_*}, P_{-\frac{9}{80}, 13\ell_*}, P_{-\frac{89}{80}, 13\ell_*}, P_{-\frac{9}{20}, 14\ell_*},P_{-\frac{13}{16}, 15\ell_*},$&&\\
&$ P_{-\frac{1}{5}, 16\ell_*}, P_{-\frac{49}{80}, 17\ell_*}, P_{-\frac{1}{20}, 18\ell_*}, P_{-\frac{21}{20}, 18\ell_*},P_{-\frac{41}{80}, 19\ell_*}, P_{-1, 20\ell_*}$&&\\
&&&\\
\hline
\end{longtable}

\begin{table}[h]
\caption{The NL cone of $\mathcal{M}_{{\rm{K3}}^{[2]},2d}^1$ with $d\leq 5$}
\label{sec6:table:NLK32_sp}
\begin{tabular}{|c|l|c|c|}
\hline
$d$&minimal generating rays of ${\rm{Eff}}^{NL}\left(\mathcal{M}_{{\rm{K3}}^{[2]},2d}^1\right)$&$\#$ rays & dim \\
\hline
$1$&$P_{-1,0}, P_{-\frac{1}{4},\ell_*}, P_{-\frac{1}{4},\delta_*}, P_{-\frac{1}{2},\ell_*+\delta_*}$&$4$&$4$\\
\hline
$2$&$P_{-1,0}, P_{-\frac{1}{8},\ell_*}, P_{-\frac{9}{8},\ell_*}, P_{-\frac{1}{4},\delta_*},P_{-\frac{5}{4},\delta_*}, P_{-\frac{3}{8}, \ell_*+\delta_*},P_{-\frac{1}{2},2\ell_*},P_{-\frac{3}{4},2\ell_*+\delta_*}
$&$8$& $6$\\
\hline
$3$&$P_{-1,0},P_{-\frac{1}{12},\ell_*}, P_{-\frac{13}{12},\ell_*}, P_{-\frac{1}{4},\delta_*}, P_{-\frac{1}{3},\ell_*+\delta_*},P_{-\frac{1}{3},2\ell_*},P_{-\frac{7}{12},2\ell_*+\delta_*},P_{-\frac{3}{4},3\ell_*},$&$9$&$7$\\
&$P_{-1,3\ell_*+\delta_*}$&&\\
\hline
$4$&$P_{-1,0}, P_{-\frac{1}{16},\ell_*}, P_{-\frac{17}{16},\ell_*},  P_{-\frac{1}{4},\delta_*}, P_{-\frac{5}{16},\ell_*+\delta_*},P_{-\frac{1}{4},2\ell_*},P_{-\frac{5}{4},2\ell_*},P_{-\frac{1}{2},2\ell_*+\delta_*},$&$12$&$9$\\
&$ P_{-\frac{9}{16},3\ell_*}, P_{-\frac{13}{16},3\ell_*+\delta_*}, P_{-1,4\ell_*}, P_{-\frac{1}{4},4\ell_*+\delta_*} $&&\\
\hline
$5$&$ P_{-1,0}, P_{-\frac{1}{20},\ell_*},P_{-\frac{21}{20},\ell_*}, P_{-\frac{1}{4},\delta_*}, P_{-\frac{3}{10},\ell_*+\delta_*}, P_{-\frac{1}{5},2\ell_*},P_{-\frac{6}{5},2\ell_*}, P_{-\frac{9}{20},2\ell_*+\delta_*},$&$16$&$12$\\
&$ P_{-\frac{9}{20},3\ell_*}, P_{-\frac{7}{10},3\ell_*+\delta_*},P_{-\frac{4}{5},4\ell_*},P_{-\frac{1}{20},4\ell_*+\delta_*},P_{-\frac{21}{20},4\ell_*+\delta_*}, , P_{-\frac{1}{4},5\ell_*}, $&&\\
&$P_{-\frac{5}{4},5\ell_*},P_{-\frac{1}{2},5\ell_*+\delta_*}$&&\\
&&&\\
\hline
\end{tabular}
\end{table}

\begin{table}[h]
\caption{The NL cone of $\mathcal{M}_{{\rm{K3}}^{[2]},8t-2}^2$ with $t\leq 5$}
\label{sec6:table:NLK32_nsp}
\begin{tabular}{|c|l|c|c|}
\hline
$t$&minimal generating rays of ${\rm{Eff}}^{NL}\left(\mathcal{M}_{{\rm{K3}}^{[2]},8t-2}^2\right)$& $\#$ rays &dim\\
\hline
$1$&$P_{-1,0},P_{-\frac{1}{3},\frac{2u_+v}{3}}$&$2$&$2$  \\
\hline
$2$&$P_{-1,0}, P_{-\frac{1}{7},\frac{2u+v}{7}},P_{-\frac{4}{7}, \frac{4u_+2v}{7}}, P_{-\frac{2}{7}, \frac{6u+3v}{7}} $&$4$&$4$  \\
\hline
$3$&$P_{-1,0}, P_{-\frac{3}{11},\frac{u+6v}{11}}, P_{-\frac{1}{11},\frac{2u_+v}{11}}, P_{-\frac{12}{11},\frac{2u+v}{11}}, P_{-\frac{4}{11},\frac{4u+2v}{11}},P_{-\frac{9}{11},\frac{6u+3v}{11}}, $&$7$&$6$ \\
&$P_{-\frac{5}{11},\frac{8u+4v}{11}}$&&\\
\hline
$4$&$P_{-1,0}, P_{-\frac{2}{3},\frac{u+2v}{3}}, P_{-\frac{4}{15},\frac{u+8v}{15}}, P_{-\frac{1}{15},\frac{2u_+v}{15}}, P_{-\frac{16}{15},\frac{2u+v}{15}}, P_{-\frac{2}{5},\frac{3u+2v}{5}},P_{-\frac{3}{5},\frac{3u+4v}{5}}, $&$10$&$8$  \\
&$P_{-\frac{4}{15},\frac{4u+2v}{15}},P_{-\frac{1}{15},\frac{7u+11v}{15}}, P_{-\frac{16}{15},\frac{7u+11v}{15}}$&&\\
\hline
$5$&$P_{-1,0}, P_{-\frac{5}{19},\frac{u+10v}{19}},P_{-\frac{1}{19},\frac{2u+v}{19}},P_{-\frac{20}{19},\frac{2u+v}{19}},P_{-\frac{7}{19},\frac{3u+11v}{19}}, P_{-\frac{4}{19},\frac{4u+2v}{19}},
$ & $12$& $9$  \\
&$ P_{-\frac{23}{19},\frac{4u+2v}{19}}, P_{-\frac{11}{19},\frac{5u+12v}{19}},P_{-\frac{9}{19},\frac{6u+3v}{19}},P_{-\frac{17}{19},\frac{7u+13v}{19}}P_{-\frac{16}{19},\frac{8u+4v}{19}}, P_{-\frac{6}{19},\frac{9u+14v}{19}}$ &&\\
&&&\\
\hline
\end{tabular}
\end{table}

\end{document}